\newcommand{\Ad}{\textup{Ad}}
\newcommand{\ad}{\textup{ad}}
\newcommand{\Zp}{\mathbb{Z}_{(p)}}
\newcommand{\Ng}{\mathcal{N}(\mathfrak{g})}
\newcommand{\g}{\mathfrak{g}}
\newcommand{\gZp}{\mathfrak{g}_{\mathbb{Z}_{(p)}}}
\newcommand{\Fp}{\mathbb{F}_p}
\newcommand{\gFp}{\mathfrak{g}_{\mathbb{F}_p}}
\newcommand{\gC}{\mathfrak{g}_{\mathbb{C}}}
\newcommand{\gF}{\mathfrak{g}_{\mathbb{F}}}
\newcommand{\gR}{\mathfrak{g}_{R}}
\newcommand{\gZ}{\mathfrak{g}_{\mathbb{Z}}}
\newcommand{\leqnomode}{\tagsleft@true}
\newcommand{\reqnomode}{\tagsleft@false}
\newtheorem{theorem}{Theorem}[subsection]
\let\c@fact\c@theorem\makeatother
\let\c@note\c@theorem\makeatother
\newtheorem{lemma}{Lemma}[subsection]
\let\c@lemma\c@theorem\makeatother
\let\c@lemma\c@theorem\makeatother
\let\c@alg\c@theorem\makeatother
\newtheorem{prop}{Proposition}[subsection]
\let\c@prop\c@theorem\makeatother
\let\c@conj\c@theorem\makeatother
\newtheorem{cor}{Corollary}[subsection]
\let\c@cor\c@theorem\makeatother
\newtheorem{defn}{Definition}[subsection]
\let\c@defn\c@theorem\makeatother
\theoremstyle{definition}
\newtheorem{example}{Example}[subsection]
\newtheorem{remark}{Remark}[subsection]
\let\c@remark\c@theorem\makeatother
\let\c@example\c@theorem\makeatother
\numberwithin{equation}{subsection}
\crefname{theorem}{Theorem}{Theorems}
\crefname{fact}{Fact}{Facts}
\crefname{note}{Note}{Notes}
\crefname{lemma}{Lemma}{Lemmas}
\crefname{alg}{Algorithm}{Algorithms}
\crefname{remark}{Remark}{Remarks}
\crefname{example}{Example}{Examples}
\crefname{prop}{Proposition}{Propositions}
\crefname{conj}{Conjecture}{Conjectures}
\crefname{cor}{Corollary}{Corollaries}
\crefname{defn}{Definition}{Definitions}
\crefname{equation}{\!\!}{\!\!} %Remove spacing around phantom equation name
\newcounter{listequation}
\begin{document}
\title{Unipotent Elements and Generalized Exponential Maps}

\author{Paul Sobaje}
\address{Department of Mathematics \\
          University of Georgia \\
          Athens, GA 30602}
\email{sobaje@uga.edu}
\date{\today}
\subjclass[2010]{Primary 20G07; Secondary 20G05}

\begin{abstract}
Let $G$ be a simple and simply connected algebraic group over an algebraically closed field $\Bbbk$ of characteristic $p>0$.  Assume that $p$ is good for the root system of $G$ and that the covering map $G_{sc} \rightarrow G$ is separable.  In previous work we proved the existence of a (not necessarily unique) Springer isomorphism for $G$ that behaved like the exponential map on the resticted nullcone of $G$.

In the present paper we give a formal definition of these maps, which we call `generalized exponential maps.'  We provide an explicit and uniform construction of such maps for all root systems, demonstrate their existence over $\Zp$, and give a complete parameterization of all such maps.  One application is that this gives a uniform approach to dealing with the ``saturation problem'' for a unipotent element $u$ in $G$, providing a new proof of the known result that $u$ lies inside a subgroup of $C_G(u)$ that is isomorphic to a truncated Witt group.  We also develop a number of other explicit and new computations for $\g$ and for $G$.  This paper grew out of an attempt to answer a series of questions posed to us by P. Deligne, who also contributed several of the new ideas that appear here.
\end{abstract}

\maketitle

\section{Introduction}

\noindent \textit{Notation.}  In what follows, $G$ denotes a simple algebraic group over an algebraically closed field $\Bbbk$ of characteristic $p>0$.  Its Lie algebra $\mathfrak{g}$ has a $[p]$-th power map sending $X \in \g$ to the element we denote as $X^{[p]}$.  The nilpotent variety of $\g$ is $\mathcal{N}(\g)$, the unipotent variety of $G$ is $\mathcal{U}(G)$.  There is a split affine group scheme $G_{\mathbb{Z}}$ over $\mathbb{Z}$ that base changes over $\Bbbk$ to $G$.  It has Lie algebra $\gZ$, and for any commutative ring $R$ we obtain a corresponding affine group scheme $G_R$ with Lie algebra $\gR$.  We set $B_{\mathbb{Z}}$ to be a split Borel subgroup defined over $\mathbb{Z}$, $U_{\mathbb{Z}}$ is its unipotent radical (see Section \ref{chevalley} for more on these conventions).  The Coxeter number of $G$ is $h$.

\subsection{} A classical result by Springer \cite{Sp3} is that if $G$ is simply connected and the characteristic of $\Bbbk$ is good for $G$, then there exist $G$-equivariant homeomorphisms between $\Ng$ and $\mathcal{U}(G)$.  Such maps, later found under these assumptions to be isomorphisms of varieties, are known as `Springer isomorphisms.'  They also exist for any simple group $G$ (not necessarily simply connected) so long as the surjective morphism $G_{sc} \rightarrow G$ from the simply-connected cover is separable.  In such a case, we follow Pevtsova and Stark in calling $p$ a \textit{separably good prime} for the root datum of $G$ \cite{PS}.  The only situation in which good does not always imply separably good is in type $A$.

For $G_{\mathbb{C}}$, a Springer isomorphism is easily obtained by the exponential map.  Springer isomorphisms in separably good characteristic can then be viewed as the existence of a partial replacement for the exponential map, the replacement sharing certain properties enjoyed by the exponential for $G_{\mathbb{C}}$.  In Springer's original work, it was the $G_{\mathbb{C}}$-equivariance and bijectivity of the exponential that were desired.

We considered in \cite{So} the problem of finding Springer isomorphisms for $G$ that were ``even more like'' the exponential map, with our focus on what these isomorphisms should do upon restriction to the subvariety $\mathcal{N}_1(\g) \subseteq \Ng$ of all $X$ such that $X^{[p]}=0$.  In particular, we proved the existence of Springer isomorphisms that, when restricted to the unipotent radicals of certain parabolic subgroups of $G$, agreed with the isomorphism coming from the exponential map in characteristic $0$ (see \cite{Ser}).  At the time, our interest in $\mathcal{N}_1(\g)$ was due to its role in the theory of support varieties for finite subgroup schemes of $G$ (see \cite{SFB},\cite{CLN}, and \cite{F}).

This paper began as a response to a series of questions we received from Pierre Deligne regarding the functoriality and uniqueness of the isomorphisms we constructed in \cite{So}.  Answers to these questions were not clear, in part due to the fact that the isomorphisms given in  \textit{loc. cit.} for groups of exceptional type were shown to exist via indirect arguments, and relied on a result by Seitz on `saturating' a unipotent element in $G$ inside a connected abelian unipotent subgroup of $G$ (see \ref{saturate}).  Consequently, issues about subfields of definition, and other important algebro-geometric properties, were not immediately evident from our approach.

These issues, and much more, are resolved in this paper, which significantly improves upon the results in \cite{So}, and contains new techniques for moving back and forth between characteristics $0$ and $p$.  Some of the clearest and best insights here have been contributed by Deligne, who has graciously allowed us to include several of his arguments along with our own (see ``Acknowledgments,'' Remark \ref{Delignelifting}, and the beginning of Section \ref{modified}).

\subsection{Statement of main result} Let $\mathcal{W}_m$ denote the algebraic group of truncated Witt vectors of length $m$ over $\Bbbk$.  As a variety, $\mathcal{W}_m \cong \mathbb{A}^m$.  A Springer isomorphism $\phi$ is defined to be a \textit{generalized exponential map} if for each $m>0$, and each $X \in \Ng$ such that $m$ is the minimal integer with $X^{[p^m]}=0$, the map
$$(a_0,\ldots,a_{m-1}) \mapsto \phi(a_0X)\phi(a_1X^{[p]})\cdots \phi(a_{m-1}X^{[p^{m-1}]})$$
defines a closed embedding of algebraic groups $\mathcal{W}_m \rightarrow G$ (this is Definition \ref{embeddingproperty}).

This definition is motivated by the well-known method of embedding truncated Witt groups into $GL_n$ with the Artin-Hasse exponential series (see Section \ref{Artinforclassical}).   In fact, we used the Artin-Hasse exponential series in \cite{So} to obtain generalized exponential maps (not yet formally named as such) for groups of type $A-D$.  The key new idea here is that this can be generalized in a natural way to groups of exceptional type.  We prove the following theorems, the second of which is the main theorem of this paper.

Let $G$ be one of the following groups: $SL_n, SO_n, Sp_{2n}$, or an adjoint group with a root system of exceptional type.  Assume that $p$ is good for the root system of $G$.

\vspace{0.15in}
\noindent \textbf{Theorem A.}  \textit{The $[p]$-mapping on $\g$ lifts to a degree $p$, $G_{\Zp}$-equivariant morphism of affine schemes
$$m^p:\gZp \rightarrow \gZp,$$
having the property that if $i$ is such that $p^i \ge h$, then $(m^p)^i(\mathfrak{u}_{\Zp})=\{0\}$.}
\vspace{0.15in}

The morphism $m^p$ is established in Proposition \ref{allbasechangeswork}(e), and an argument for the vanishing of $(m^p)^i$ can be found in Section \ref{modified} (where one also finds a proof of the next result).  The construction given makes it clear that the morphism $m^p$ satisfies many desirable properties.  For classical groups, their defining representation (over $\Zp$), with ordinary $p$-th power matrix multiplication, provides this map ($p > 2$ for types $B-D$).  For groups of exceptional type, the adjoint representation gives a natural generalization of this construction.  The key is that the non-degeneracy of the trace form mod $p$ gives rise to a $G_{\Zp}$-equivariant module splitting of $\mathfrak{gl}(\gZp)$, so one now takes $p$-th power matrix multiplication followed by a projection map.  We note that for classical groups, if $V_{\Zp}$ is the natural $\Zp$-module for $G_{\Zp}$, then one similary obtains a splitting of $\mathfrak{gl}(V_{\Zp})$, only here the projection with respect to this splitting is the identity on odd powers of elements in $\gZp$, and is zero on even powers (here we are identifying $\gZp$ with its embedding in $\mathfrak{gl}(\gZp)$).

Lifting the $[p]$-mapping to $\gZp$ allows for a very straightforward generalization of the Artin-Hasse exponential morphism to groups of exceptional type.  Let $m^{p^i}$ denote the morphism $(m^p)^i$.

\vspace{0.15in}
\noindent \textbf{Theorem B.}  \textit{The map
$$\tilde{E}_p(X)=\exp\left(X + \frac{m^p(X)}{p} + \frac{m^{p^2}(X)}{p^2} + \cdots\right)$$
defines a Springer isomorphism for $G_{\mathbb{C}}$.  It restricts to a $B_{\mathbb{C}}$-equivariant isomorphism
$$\mathfrak{u}_{\mathbb{C}} \xrightarrow{\sim} U_{\mathbb{C}}$$
that is defined over $\Zp$.  Over $\Bbbk$, this isomorphism from $\mathfrak{u}$ to $U$ extends to a generalized exponential map for $G$.}
\vspace{0.15in}

\subsection{Other Results and Relation to Previous Work}

\subsubsection{Saturation}\label{saturate} The ``saturation problem'' deals with finding a way to embed a $p$-unipotent element $u \in G$ inside a subgroup of $G$ isomorphic to the additive group $\mathbb{G}_a$.   This terminology goes back to work of Serre in \cite{Ser1}, where saturation was used to obtain semisimplicity results in characteristic $p$ for tensor products of sufficiently small representations of an algebraic or abstract group.  Deligne extended Serre's work on semisimplicity to affine group schemes in \cite{D}.

Testerman proved that in good characteristic, $p$-unipotent elements could in fact be embedded inside a simple subgroup of $G$ of type $A_1$ \cite{T}.  Seitz strengthened this, proving that they are contained in `good $A_1$' subgroups, any two of which are conjugate by an element in $C_G(u)$, and used this to establish a canonical saturation of $u$.

Proud \cite{Pr} proved that saturation could be achieved for $u$ having order $p^m$ for any $m\ge 1$, with the embedding now in a subgroup isomorphic to $\mathcal{W}_m$, but he did not claim that the embedding was canonical.  Seitz \cite{Sei2} improved this result, showing that such subgroups could be found inside of $C_G(u)$, getting a little closer to finding a canonical saturation, though it was not clear that the saturatated subgroups he gave were isomorphic to a truncated Witt group rather than merely isogenous to one.

The results in this paper bring to completion the idea of McNinch in \cite[\S 7.5]{M}, showing in all types that the Artin-Hasse exponential (or a suitable generalization thereof) can be used to saturate unipotent elements of arbitrary order.  In \cite{So}, we used the work of Seitz to prove that generalized exponential maps existed in exceptional types.  The work presented here is independent of that result, so indeed gives a new proof of saturation.  Furthermore, the saturated subgroups are isomorphic to truncated Witt groups.

\subsubsection{Decomposing Centralizers} Using generalized exponential maps, we show that for any nilpotent element $X \in \g$, the closed subgroup $Z(C_G(X))^0 \le G$ decomposes, as an algebraic group, into a direct product of truncated Witt groups.  This strengthens a result by Seitz, who observed in \cite{Sei2} that such a decomposition holds as abstract groups.

\subsubsection{Parameterizing Springer Isomorphisms/Explicit Results For $\g$} We give a full parameterization of all generalized exponential maps for $G$.  We also give a more detailed description of the variety of all Springer isomorphisms for $G$.  An aspect of this is to construct equivariant variety endomorphisms on $\Ng$ that behave somewhat like multiplicative power maps on $\mathcal{N}(\mathfrak{gl}_n)$.  In some cases these maps can be lifted to $\Zp$, thanks to interesting multilinear maps that can be shown to exist from ${\gZp}^{\times i}$ to $\gZp$.

We believe these various explicit results will prove useful in the study of $G$ and other related contexts, and indicate in Remark \ref{GGGR} one connection to Taylor's work on Generalized Gelfand-Graev representations.

\subsubsection{On Good vs. Separably Good Characteristic} Springer isomorphisms cannot exist in bad characteristic due to the fact that the number of nilpotent and unipotent orbits do not match up in this case.  In type $A$, all primes are good, but the separability of the covering map is still needed to guarantee that Springer isomorphisms exist.  We look at some issues related to this problem, and give a detailed example of what goes wrong for $PGL_2$ in characteristic $2$.

\subsection{Acknowledgements}  Professor Deligne worked out his own answers to the questions he asked, proving independently some of what appears here.  We have adapted our own proofs in light of his, opting for the clearest and strongest results, and this paper is significantly better because of the ideas he has shared.  We are very grateful to Professor Deligne for allowing us to include his work, for helpful conversations about this material, and for his interest in \cite{So}.

We also thank Jean-Pierre Serre for suggesting that we address what happens when $p$ is good but not separably good (such as $PGL_2$ in characteristic $2$), and for communicating to us some observations as to what goes wrong in this situation.  Further thanks goes to Jay Taylor for several helpful discussions on this matter, and who has allowed us to include his work on showing that Springer isomorphisms do not exist in this case.  Finally, we thank George McNinch for helpful clarifying discussions about several of his results that we have used throughout this paper.

\section{Preliminaries}

Throughout this paper $\Bbbk$ will denote an algebraically closed field of characteristic $p>0$.  We refer the reader to \cite[\S 1]{So} for any background material on Springer isomorphisms that is not covered here.

\subsection{Affine Group Schemes}

Let $R$ be a commutative ring., and $H$ an affine group scheme over $R$.  We denote by $R[H]$ the coordinate ring or affine algebra of $H$.  For each commutative $R$-algebra $S$, $H(S) = \text{Hom}_{R-alg}(R[H],S)$ is an abstract group.  Further, we can base change to an affine group scheme $H_S$ over $S$, having coordinate ring $S[H_S] \cong R[H] \otimes_R S$.  There is then an isomorphism of abstract groups $H(S) \cong H_S(S)$.  When referring to such a group we will use these notations interchangeably.

The Lie algebra of $H$ is $\mathfrak{h}$, and $\mathfrak{h}_S = \text{Lie}(H_S)$.  If $H$ is \textit{infinitesimally flat}, then there is an isomorphism $\mathfrak{h}_S \cong \mathfrak{h} \otimes_R S$ \cite[I.7.4(2)]{J}.

If $H$ is an affine group scheme over $\Bbbk$, we write $\mathcal{U}(H)$ to denote the unipotent variety of $H$, and $\mathcal{N}(\mathfrak{h})$ denotes the nilpotent variety of $\mathfrak{h}$.  Centralizers in $H$ will be denoted by $C_H(-)$, and centralizers in $\mathfrak{h}$ by $C_{\mathfrak{h}}(-)$.

\subsection{A Factorization Result}

Given affine schemes $\mathcal{X}$ and $\mathcal{Y}$ over $R$, a morphism $\varphi: \mathcal{X} \rightarrow \mathcal{Y}$ is given by maps $\varphi_A:\mathcal{X}(A) \rightarrow \mathcal{Y}(A)$, for each commutative $R$-algebra $A$, such that the assignment is functorial.  When no confusion may arise, we might simply denote $\varphi_A$ as $\varphi$.  The morphism $\varphi$ is equivalent to its comorphism $\varphi^*:R[\mathcal{Y}] \rightarrow R[\mathcal{X}]$.

We will repeatedly employ the following result about factorizations of affine scheme morphisms.  Though stated in more generality, the argument is essentially that given by McNinch in the proof of \cite[Proposition 7.2]{M}.

\begin{prop}\label{factorsthrough}
Let $\mathcal{X}$ be an affine scheme over $R$, and let $S$ be a commutative $R$-algebra for which the natural map $R[\mathcal{X}] \rightarrow R[\mathcal{X}] \otimes_R S$ is injective.  Let $\mathcal{Z}$ be an affine scheme over $R$, and $\mathcal{Y} \subseteq \mathcal{Z}$ a closed subscheme.  If $\phi:\mathcal{X} \rightarrow \mathcal{Z}$ is a morphism of schemes over $R$ such that $\phi_S: \mathcal{X}_S \rightarrow \mathcal{Z}_S$ factors through the inclusion of $\mathcal{Y}_S$ into $\mathcal{Z}_S$, then $\phi$ factors through the inclusion of $\mathcal{Y}$ into $\mathcal{Z}$.

In particular, these conditions are satisfied if $R=\Zp$, $S$ is a field of characteristic $0$, and $\mathcal{X}$ is an affine scheme over $\Zp$ such that $\Zp[\mathcal{X}]$ is a free $\Zp$-module.
\end{prop}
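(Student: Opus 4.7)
The plan is to translate the geometric factorization into commutative algebra and then use the injectivity hypothesis to descend from $S$ back down to $R$. Let $I \subseteq R[\mathcal{Z}]$ denote the ideal defining the closed subscheme $\mathcal{Y}$, so that $R[\mathcal{Y}] = R[\mathcal{Z}]/I$, and let $\phi^{*}\colon R[\mathcal{Z}]\to R[\mathcal{X}]$ be the comorphism of $\phi$. The claim that $\phi$ factors through $\mathcal{Y}$ is equivalent to the statement $\phi^{*}(I)=0$, and this is what I would aim to prove.

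First, after base change to $S$, the defining ideal of $\mathcal{Y}_{S}\subseteq \mathcal{Z}_{S}$ is the image of $I\otimes_{R}S$ inside $R[\mathcal{Z}]\otimes_{R}S$, and the comorphism of $\phi_{S}$ is just $\phi^{*}\otimes_{R}\operatorname{id}_{S}$. The assumption that $\phi_{S}$ factors through $\mathcal{Y}_{S}$ then translates to saying that for each $f\in I$, the element $\phi^{*}(f)\otimes 1$ vanishes in $R[\mathcal{X}]\otimes_{R}S$. Combining this with the hypothesis that the structural map $R[\mathcal{X}]\to R[\mathcal{X}]\otimes_{R}S$ is injective forces $\phi^{*}(f)=0$ for every $f\in I$, i.e.\ $\phi^{*}(I)=0$, which is exactly what is required.

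For the final sentence, the point is that the injectivity of $R[\mathcal{X}]\to R[\mathcal{X}]\otimes_{R}S$ is automatic under the stated conditions. Any field $S$ of characteristic $0$ contains the prime field $\mathbb{Q}$, which in turn contains $\Zp$, so the structure map $\Zp\to S$ is injective. If $\Zp[\mathcal{X}]$ is $\Zp$-free with basis $\{e_{i}\}$, then $\Zp[\mathcal{X}]\to \Zp[\mathcal{X}]\otimes_{\Zp}S$ decomposes as a direct sum of copies of $\Zp\hookrightarrow S$, hence is injective.

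I do not expect a real obstacle: the argument is purely formal. The only mild subtlety worth flagging is that tensoring need not preserve the injection $I\hookrightarrow R[\mathcal{Z}]$, so the ideal of $\mathcal{Y}_{S}$ is the \emph{image} of $I\otimes_{R}S$ rather than $I\otimes_{R}S$ itself; but this causes no trouble since we only need the elements $\phi^{*}(f)\otimes 1$ (for $f\in I$) to vanish, and that is immediate from the factorization of $\phi_{S}$.
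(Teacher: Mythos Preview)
Your proof is correct and follows essentially the same approach as the paper: reduce to showing $\phi^{*}(I)=0$, observe that base change gives $\phi^{*}(f)\otimes 1=0$ for all $f\in I$, and conclude via the injectivity hypothesis. You actually add a bit of care the paper omits (the remark about the ideal of $\mathcal{Y}_S$ being the image of $I\otimes_R S$, and the explicit justification of the $\Zp$ case), but the core argument is identical.
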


\begin{proof}
Let $J \subseteq R[\mathcal{Z}]$ denote the ideal defining the closed subgroup scheme $\mathcal{Y}$.  That is, $R[\mathcal{Y}] \cong R[\mathcal{Z}]/J$.  The homomorphism $\phi$ is equivalent to its comorphism
$$\phi^*: R[\mathcal{Z}] \rightarrow R[\mathcal{X}].$$
This morphism then factors through $\mathcal{Y}$ if and only if $\phi^*(J)=0$.  Now, on change of base to $S$, the morphism $\phi_S$ does factor through $\mathcal{Y}_S$, so we have that $\phi_S^*(J \otimes_R 1_S) = 0$.  On the other hand, $\phi_S^*(J \otimes_R 1_S) = \phi^*(J) \otimes_R 1_S \subseteq R[\mathcal{X}] \otimes_R S$.  By assumption, the map $R[\mathcal{X}] \rightarrow R[\mathcal{X}] \otimes_R 1_S$ is an isomorphism, thus $\phi^*(J) = 0$.
\end{proof}

\subsection{Chevalley Groups}\label{chevalley}

We refer to \cite[II.1.1]{J}, where one finds a list of references for the following facts.  There exists a smooth, infinitesimally flat, and split affine group scheme $G_{\mathbb{Z}}$ over $\mathbb{Z}$ that base changes to $G$.  In particular, $\mathbb{Z}[G_{\mathbb{Z}}]$ is a free $\mathbb{Z}$-module, finitely generated as a $\mathbb{Z}$-algebra, and $\gZ$ is a free $\mathbb{Z}$-module of finite rank.  If $G$ is of adjoint type, then the adjoint representation of $G$ on $\g$ arises over $\mathbb{Z}$, corresponding to a closed embedding of affine group schemes $G_{\mathbb{Z}} \rightarrow GL(\gZ)$.  We will generally work with these group scheme structures over $\Zp$, rather than all the way down to $\mathbb{Z}$.

We fix a split Borel subgroup scheme $B_{\mathbb{Z}} \le G_{\mathbb{Z}}$.  All standard parabolic subgroups over $\mathbb{Z}$ will contain $B_{\mathbb{Z}}$ as a subgroup scheme.  There is a semidirect decomposition $B_{\mathbb{Z}} \cong T_{\mathbb{Z}} \ltimes U_{\mathbb{Z}}$, with $T_{\mathbb{Z}}$ a split torus and $U_{\mathbb{Z}}$ a split unipotent subgroup scheme.

The root system of $G$ will be denoted as $\Phi$, the positive roots as $\Phi^+$, the simple roots as $\Pi$, and the Weyl group is $W$.  For each $\alpha \in \Phi$ there is a corresponding root subgroup $U_{\alpha,\mathbb{Z}}$.  Our choice of Borel subgroup $B_{\mathbb{Z}}$ corresponds to the negative root subgroups.

A prime $p$ is good for the root system of $G$ if it is not bad for the root system.  We have included in Table \ref{table:1} a list of all bad primes.  The Coxeter number of $G$ is denoted as $h$.

\subsection{Nilpotent Elements and Associated Cocharacters}

Let $\theta$ be a cocharacter of $G$ (that is, a morphism of algebraic groups $\theta: \mathbb{G}_m \rightarrow G$).  The adjoint action of $G$ on $\g$ restricts via $\theta$ to make $\g$ into a $\mathbb{G}_m$-module.  For each integer $j$, let $\g(\theta;j)$ denote the subspace of $\g$ having weight $j$.  It can be seen directly that $[\g(\theta;i),\g(\theta;j)] \subseteq \g(\theta;i+j)$. 

There is a parabolic subgroup $P(\theta) \le G$ determined by $\theta$ (cf. \cite[8.4.5]{Sp}).  The Lie algebra of $P(\theta)$ is precisely the subalgebra generated by the non-negatively graded elements.  That is, $\mathfrak{p}(\theta) =\sum_{j \ge 0} \g(\theta; j)$.  The Lie algebra of its unipotent radical $U(\theta)$ is the subalgebra $\sum_{j > 0} \g(\theta; j)$.

Let $X \in \Ng$.  Then $\theta$ is said to be associated to $X$ if $X \in \g(\theta;2)$, and if $\theta$ has image in the derived subgroup of $C_G(S)$, where $S$ is a maximal torus of $C_G(X)$.  Associated cocharacters exist in good characteristic, and can be viewed as a generalization of Jacobson-Morozov $\mathfrak{sl}_2$-triples.  Any two associated cocharacters for $X$ differ by conjugation by an element in $C_G(X)$.

If $\theta$ is associated to $X$, then $C_{\g}(X)$ (which in very good characteristic $= \text{Lie}(C_G(X))$) is a $\theta(\mathbb{G}_m)$-submodule of $\g$.  The weights of this submodule were originally worked out for $G_{\mathbb{C}}$ by Kostant in \cite{K}, and are twice the exponents of the Weyl group of $G_{\mathbb{C}}$.  More recently they were shown to be the same in very good characteristic by McNinch and Testerman \cite{MT}.

\begin{prop}\label{weightsofcenter}\cite[5.2.5]{MT}
Let $X \in \g$ be a regular nilpotent element, $\theta$ an associated cocharacter of $X$, and $r$ the rank of $G$.  Assume that $p$ is a very good prime for $G$.  Then $C_{\g}(X)$ has a basis of the form
$$\{X_1,X_2,\ldots,X_r\},$$
with $X=X_1$, and
$$X_i \in \g(\theta; 2k_i),$$
where $1=k_1 \le k_2 \le \cdots \le k_r$ are the exponents of the Weyl group of $G$.  Moreover, we can choose $X$ and $\theta$ over $\Zp$.  The subgroup scheme $C_{G_{\Zp}}(X)$ is smooth, and its Lie algebra is a free $\Zp$-module having a basis with respect to the action of $\theta(\mathbb{G}_{m,\Zp})$ as above.
\end{prop}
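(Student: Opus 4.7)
The plan is to set everything up over $\Zp$ first and then base-change to $\Bbbk$. Begin with a Chevalley pinning: fix integral root vectors $X_{\alpha} \in \gZ$, set $X = \sum_{\alpha \in \Pi} X_{-\alpha} \in \operatorname{Lie}(U_{\mathbb{Z}})$ (a regular nilpotent defined over $\mathbb{Z}$), and take $\theta$ to be the cocharacter corresponding to twice the sum of positive coroots, which is again defined over $\mathbb{Z}$. A direct weight calculation on root spaces verifies $X \in \g(\theta;2)$ and that $\theta$ has image in the derived subgroup of $C_G(S)$ for a maximal torus $S$ of $C_G(X)$, so $\theta$ is an associated cocharacter in the sense of the preceding paragraph.

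The characteristic-zero half of the result is classical: Kostant showed that $C_{\g_{\mathbb{C}}}(X)$ is abelian of dimension $r$ and decomposes under $\theta(\mathbb{G}_m)$ into one-dimensional weight spaces of weights $2k_1,\ldots,2k_r$. Pick weight-vector generators $Y_1,\ldots,Y_r$; after clearing denominators we may assume $Y_i \in \gZp$, and after rescaling by a unit that the reduction $\bar Y_i \in \gFp$ is nonzero. We also put $X_1 = X$.

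The crux is to establish smoothness of the scheme-theoretic centralizer $C_{G_{\Zp}}(X)$ over $\operatorname{Spec}\Zp$. Over each geometric fiber (characteristic zero and characteristic $p$), the very-good-prime hypothesis yields $\dim C_{G}(X) = \dim C_{\g}(X) = r$, whence each fiber of $C_{G_{\Zp}}(X)$ is smooth of dimension $r$ (and its Lie algebra equals the Lie-algebra centralizer). Combined with the smoothness of $G_{\Zp}$ over $\Zp$ and a standard fiberwise smoothness criterion, it follows that $C_{G_{\Zp}}(X)$ is smooth over $\Zp$ of relative dimension $r$. In particular, $\operatorname{Lie}(C_{G_{\Zp}}(X))$ is a free $\Zp$-module of rank $r$, and its base change to any field gives back the Lie-algebra centralizer there.

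To produce the graded basis, observe that $\theta$ acts on the free $\Zp$-module $\operatorname{Lie}(C_{G_{\Zp}}(X))$, splitting it as a direct sum of weight spaces, each a direct summand of a free module and hence itself free. Base-changing to $\mathbb{C}$ and applying Kostant determines the $\Zp$-rank of the weight-$2k_i$ summand as the multiplicity of $k_i$ among the exponents. The rescaled $Y_i$ from the second paragraph lie in the correct weight spaces and have nontrivial mod-$p$ reductions, so after an invertible $\Zp$-linear change inside each weight space they assemble into the desired graded $\Zp$-basis $\{X_1, \ldots, X_r\}$ of $\operatorname{Lie}(C_{G_{\Zp}}(X))$ with $X_i \in \gZp(\theta; 2k_i)$. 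Base-changing to $\Bbbk$ then yields the stated basis of $C_{\gk}(X)$. The main obstacle is the smoothness of $C_{G_{\Zp}}(X)$: this rests squarely on the very-good-prime assumption in the special fiber, since that is precisely what guarantees $\dim C_{G_{\Bbbk}}(X) = \dim C_{\gk}(X)$ and hence smoothness of the fiber. Everything else reduces to a formal comparison of graded free $\Zp$-modules against their generic fibers via Kostant.
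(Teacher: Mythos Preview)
The paper does not supply its own proof of this proposition; it cites \cite[5.2.5]{MT} and then gives an explanatory paragraph spelling out the $\Zp$-statement. Your outline matches that explanation closely: choose $X$ and $\theta$ integrally, establish smoothness of $C_{G_{\Zp}}(X)$ so that its Lie algebra is a free $\Zp$-module, then use the diagonalizable action of $\mathbb{G}_{m,\Zp}$ to split it into free weight summands whose ranks are read off from Kostant's result over $\mathbb{C}$. That is exactly the approach the paper sketches.

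Two small points. First, your explicit cocharacter is off: with $X = \sum_{\alpha\in\Pi} X_{-\alpha}$ you need $\langle -\alpha,\theta\rangle = 2$ for each simple $\alpha$, so $\theta$ is the \emph{negative} of the sum of positive coroots (i.e.\ $-2\rho^\vee$), not twice that sum. Second, your paragraph producing the $Y_i$ by clearing denominators from $\mathbb{C}$ is redundant once you have the last paragraph: the weight decomposition of the free $\Zp$-module $\operatorname{Lie}(C_{G_{\Zp}}(X))$ under $\mathbb{G}_{m,\Zp}$ already hands you free weight summands, and any $\Zp$-basis of those summands does the job --- there is no need to descend a $\mathbb{C}$-basis and then argue that its reductions are independent. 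The paper's explanatory paragraph takes exactly this direct route. Your smoothness argument (equidimensional smooth fibers over $\operatorname{Spec}\Zp$ plus a fiberwise criterion) is the standard one, though strictly speaking flatness over $\Zp$ is part of what such a criterion requires; this is handled in \cite{MT}, to which the paper defers.
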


Let us now explain the final statement in the proposition a bit further.  The interested reader is referred to the proof in \cite{MT} for further details.

We can choose an element $X \in \gZp$ that is a regular nilpotent element in $\gC$, and such that $X \otimes_{\Zp} \Bbbk$ is regular nilpotent in $\g$.  The subgroup scheme $C_{G_{\Zp}}(X)$ is smooth, and on base change to $\Bbbk$ is $C_G(X \otimes_{\Zp} \Bbbk)$.  Further, $\text{Lie}(C_{G_{\Zp}}(X))$ is a free $\Zp$-module, and
$$C_{\g}(X \otimes_{\Zp} \Bbbk)= \text{Lie}(C_{G_{\Zp}}(X)) \otimes_{\Zp} \Bbbk.$$
One can then find a homomorphism of group schemes over $\Zp$,
$$\mathbb{G}_{m,\Zp} \rightarrow G_{\Zp},$$
which base changes over $\Bbbk$ to an associated cocharacter for $X \otimes_{\Zp} \Bbbk$.  Via this homomorphism, $\text{Lie}(C_{G_{\Zp}}(X))$ is a $\mathbb{G}_{m,\Zp}$-module.  One can therefore choose a basis over $\Zp$ for $\text{Lie}(C_{G_{\Zp}}(X))$ consisting of weight ``vectors'' for the diagonalizable action of $\mathbb{G}_{m,\Zp}$, with the weights given as in the proposition.

\begin{example}
Let $G=SL_n$, and let $X$ be the regular nilpotent element consisting of $1$'s on the superdiagonal and $0$'s elsewhere.  A particular basis for $C_{\g}(X)$ is
$$\{X,X^2,X^3,\ldots,X^{n-1}\}.$$
Since $X \in \g(\theta; 2)$, it follows that $X^i \in \g(\theta; 2i)$.  This agrees with the lemma, as the exponents of the Weyl group for $SL_n$ are: $1,2,\ldots,n-1$.  Furthermore, $X$ clearly comes from $\mathfrak{sl}_{n,\Zp}$.
\end{example}

We have included a listing of the exponents of the Weyl group for each root system in Table \ref{table:1}.  Observe that the exponents appear with multiplicity one in every case with one exception: if $n$ is even, then $n-1$ appears twice as an exponent for $D_n$.

\vspace{0.1in}

\begin{table}[b!]
\begin{tabular}{ | c | c | c | c |}
\hline
Root System & Exponents & Bad Primes & Coxeter Number\\
\hline
$A_n$ & $1,2,\ldots,n$ & none & $n+1$\\
\hline
$B_n$ & $1,3,5,\ldots,2n-1$ & $2$ & $2n$\\
\hline
$C_n$ & $1,3,5,\ldots,2n-1$ & $2$ & $2n$\\
\hline
$D_n$ & $1,3,5,\ldots,2n-3,$ and $n-1$ & $2$ & $2n-2$\\
\hline
$E_6$ & $1,4,5,7,8,11$ & $2,3$ & $12$\\
\hline
$E_7$ & $1,5,7,9,11,13,17$ & $2,3$ & $18$\\
\hline
$E_8$ & $1,7,11,13,17,19,23,29$ & $2,3,5$ & $30$\\
\hline
$F_4$ & $1,5,7,11$ & $2,3$ & $12$\\
\hline
$G_2$ & $1,5$ & $2,3$ & $6$\\
\hline
\end{tabular}
\bigskip
\caption{Numerical Data For Root Systems}
\label{table:1}
\end{table}

\vspace{0.1in}
We can use the decomposition of $C_{\g}(X)$ given above to construct maps for $G$ in a more explicit manner, similar to that for $SL_n$ (or $GL_n$).  In order to do so, we will require the following result.

\begin{lemma}\label{isregular}
Let $X \in \g$ be a regular nilpotent element.  Let $Y \in C_{\g}(X)$, and write
$$Y= aX + X^{\prime}, \qquad X^{\prime} \in \bigoplus_{i=2}^r C_{\g}(X)(\theta; 2k_i).$$
Then $Y$ is a regular nilpotent element if and only if $a \ne 0$.
\end{lemma}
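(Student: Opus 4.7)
The plan is to exploit the $\mathbb{G}_m$-action on $\g$ through the cocharacter $\theta$. By \cref{weightsofcenter} together with Table \ref{table:1}, the exponent $1$ has multiplicity one among the $k_i$, so $k_2 \ge 2$ (the rank-one case $r=1$ being trivial). Hence $X_1 = X$ is the unique basis vector of $C_\g(X)$ of $\theta$-weight $2$, while every other $X_i$ ($i \ge 2$) has $\theta$-weight $\ge 4$.

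For the direction $a \neq 0 \Rightarrow Y$ regular, I would consider the morphism $f \colon \mathbb{A}^1 \to \g$ extending $t \mapsto t^{-2}(\theta(t)\cdot Y)$; writing $Y = aX + \sum_{i \ge 2} a_i X_i$,
$$f(t) \;=\; aX + \sum_{i=2}^{r} a_i\, t^{\,2k_i-2}\, X_i,$$
which is polynomial in $t$ since every $2k_i - 2 \ge 2$. Then $f(0) = aX$ is regular nilpotent (a nonzero scalar multiple of $X$), and for any $t_0 \neq 0$ the element $f(t_0)$ is a nonzero scalar multiple of a $G$-conjugate of $Y$, so regularity of $f(t_0)$ and of $Y$ are equivalent. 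Since the regular locus $\g_{\mathrm{reg}}$ is open in $\g$ and $f(0) \in \g_{\mathrm{reg}}$, the preimage $f^{-1}(\g_{\mathrm{reg}}) \subseteq \mathbb{A}^1$ is a nonempty open set, hence contains some $t_0 \neq 0$, and we conclude that $Y$ is regular.

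For the contrapositive of the converse, assume $a = 0$. Because $X$ is regular nilpotent, the associated cocharacter $\theta$ is principal: $P(\theta)$ is a Borel subgroup $B$ with nilradical $\mathfrak{u}$, and $\g_\alpha$ sits in weight $2\,\mathrm{ht}(\alpha)$. Using that every positive root of height $\ge 2$ is a sum of two positive roots, one identifies
$$\bigoplus_{j \ge 4} \g(\theta; j) \;=\; \bigoplus_{\substack{\alpha \in \Phi^+ \\ \mathrm{ht}(\alpha) \ge 2}} \g_\alpha \;=\; [\mathfrak{u}, \mathfrak{u}],$$
so $Y \in [\mathfrak{u}, \mathfrak{u}]$. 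The proper surjection $G \times_B [\mathfrak{u}, \mathfrak{u}] \twoheadrightarrow G \cdot [\mathfrak{u}, \mathfrak{u}]$ then gives
$$\dim\bigl(G \cdot [\mathfrak{u}, \mathfrak{u}]\bigr) \;\le\; \dim(G/B) + \dim[\mathfrak{u}, \mathfrak{u}] \;<\; 2\dim\mathfrak{u} \;=\; \dim\Ng,$$
since $\dim \mathfrak{u}/[\mathfrak{u}, \mathfrak{u}] = r \ge 1$. The regular orbit is irreducible of dimension $\dim\Ng$ and $G$-stable, so it cannot be contained in $G \cdot [\mathfrak{u}, \mathfrak{u}]$ and therefore (being $G$-stable) cannot meet it at all. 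Thus $Y$ is not regular.

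The only real obstacle is the weight bookkeeping: one must confirm $k_2 \ge 2$ from Table \ref{table:1} and identify $\bigoplus_{j \ge 4} \g(\theta; j)$ with $[\mathfrak{u}, \mathfrak{u}]$ via the principal grading by root heights. Beyond that, the argument reduces to openness of $\g_{\mathrm{reg}}$ on one side and a dimension count on the other.
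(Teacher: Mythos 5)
Your proof is correct, but both halves take a genuinely different route from the paper's. For the direction $a\neq 0\Rightarrow Y$ regular, the paper argues entirely inside the Lie algebra: it shows $C_{\g}(Y)\subseteq C_{\g}(X)$ by decomposing an arbitrary $Z\in C_{\g}(Y)$ into $\theta$-weight components and killing them one at a time starting from the lowest weight; this yields the stronger conclusion $C_{\g}(Y)=C_{\g}(X)$ along the way. Your contraction argument with $f(t)=t^{-2}\,\theta(t)\cdot Y$ and openness of the regular locus is shorter and more geometric, at the cost of giving only regularity (from which equality of centralizers follows by a dimension count anyway). For the converse, the paper likewise places $Y$ in the proper $P(\theta)$-stable subspace $\sum_{j>2}\g(\theta;j)$ of $\mathrm{Lie}(U(\theta))$ and concludes that $Y$ cannot be a Richardson element of the Borel $P(\theta)$; your $G\times_B[\mathfrak{u},\mathfrak{u}]$ dimension count is a globalized version of the same idea. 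One caveat: the identification $\bigoplus_{j\ge 4}\g(\theta;j)=[\mathfrak{u},\mathfrak{u}]$ is not purely combinatorial in characteristic $p$ -- it needs the structure constants $[\g_{\alpha_i},\g_{\beta}]$ to be nonzero mod $p$ (true in good characteristic, but a real verification, e.g.\ constants $\pm 3$ occur in type $G_2$). You can sidestep this entirely by running the dimension count with $V=\bigoplus_{j\ge 4}\g(\theta;j)$ itself, which is $B$-stable of codimension $r$ in $\mathfrak{u}$, without ever invoking $[\mathfrak{u},\mathfrak{u}]$.
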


\begin{proof}
We begin with the fact that since $X$ is a regular nilpotent element, $C_{\g}(X)$ is an abelian Lie algebra, consisting entirely of nilpotent elements.  If $Y \in C_{\g}(X)$, it follows that $C_{\g}(X) \subseteq C_{\g}(Y)$.  Now, $Y$ is regular if and only if this containment is an equality, which happens if and only if $C_{\g}(Y)\subseteq C_{\g}(X)$.  We will therefore show that this latter containment occurs if and only if $a \ne 0$.

First, suppose that $a \ne 0$.  After rescaling, we may assume that $a=1$.  Let $Z \in C_{\g}(Y)$, and write
$$Z = \sum_j Z_j, \quad Z_j \in \g(\theta;j).$$
Since $[Y,Z]=0$, and $Y=X + X^{\prime}$, we have that
$$[X,\sum_j Z_j]=-[X^{\prime},\sum_j Z_j].$$
Choose $j_0$ minimal such that $Z_{j_0} \ne 0$.  We then have
$$[X,Z_{j_0}]=-[X,\sum_{j>j_0} Z_j] - [X^{\prime},\sum_{j \ge j_0} Z_j].$$
As $X^{\prime}$ is the sum of weight vectors for $\theta(\mathbb{G}_m)$ having weight strictly greater than $2$, it follows that the right side of the equation is a sum of weight vectors of weight $>j_0+2$, forcing $[X,Z_{j_0}]=0$.  But then $Z_{j_0} \in C_{\g}(X) \subseteq C_{\g}(Y)$, and so $Z-Z_{j_0} \in C_{\g}(Y)$.  We may now replace $Z$ with $Z-Z_{j_0}$ and repeat the argument.  Continuing in this way, we find that each $Z_j$ commutes with $X$, so that $Z$ does also.  Therefore, $Z \in C_{\g}(X)$, and so $C_{\g}(Y) \subseteq C_{\g}(X)$.

On the other hand, suppose that $a=0$.  The parabolic subgroup $P(\theta)$ corresponding to $\theta$ is a Borel subgroup of $G$ since $X$ is regular.  We have here that $Y \in \sum_{j > 2} \g(\theta; j)$.  Since this subalgebra does not contain $X$, it is a proper subalgebra of $\text{Lie}(U(\theta))$.  It is stable under the adjoint action of $P(\theta)$ (recall that the $\theta(\mathbb{G}_m)$-weights in $\mathfrak{p}(\theta)$ are non-negative).  It follows that $Y$ cannot be a regular nilpotent element as its $P(\theta)$-orbit will not coincide with the $P(\theta)$-orbit of $X$, thus $Y$ is not a Richardson element of the Borel subgroup $P(\theta)$).
\end{proof}

\section{Constructing All Springer Isomorphisms for $G$}\label{constructions}

\subsection{}

Our goal is to provide increasingly detailed and useful descriptions of the variety of all Springer isomorphisms for $G$, with $SL_n$ serving as our model.  We recall the observations of Serre in the appendix to \cite{M2}.  Given a regular nilpotent element $X \in \g$, a Springer isomorphism is defined completely by where it sends $X$, with any regular unipotent element in $C_G(X)$ a valid destination.  With $N_G(C_G(X))$ denoting the normalizer in $G$ of $C_G(X)$, there is an identification between the regular unipotent elements in $C_G(X)$ and the cosets of $C_G(X)$ in $N_G(C_G(X))$.  Thus, the set of all Springer isomorphisms for $G$ is parameterized by the algebraic group $N_G(C_G(X))/C_G(X)$.

We will find it necessary to pass back and forth between the first two equivalent statements in the next lemma, both for $G$ and for $G_{\mathbb{C}}$.  We add a third equivalence to give a more complete picture.

\begin{lemma}\label{springeroversubgroups}
Let $\mathbb{F}$ be any algebraically closed field having separably good characteristic for $G$ (including characteristic $0$), and let $X$ be a regular nilpotent element of $\gF$.  Then the following sets are each in bijection with $N_{G_{\mathbb{F}}}(C_{G_{\mathbb{F}}}(X))/C_{G_{\mathbb{F}}}(X)$:
\begin{itemize}
\item[(a)] $G_{\mathbb{F}}$-equivariant isomorphisms from $\mathcal{N}(\gF)$ to $\mathcal{U}(G_{\mathbb{F}})$.
\item[(b)] $B_{\mathbb{F}}$-equivariant isomorphisms from $\mathfrak{u}_{\mathbb{F}}$ to $U_{\mathbb{F}}$. 
\item[(c)] $N_{G_{\mathbb{F}}}(C_{G_{\mathbb{F}}}(X))$-equivariant isomorphisms from $C_{\gF}(X)$ to $C_{G_{\mathbb{F}}}(X)^0$.
\end{itemize}
\end{lemma}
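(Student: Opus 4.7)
My plan is anchored on Serre's observation stated in the paragraph preceding the lemma: a Springer isomorphism $\phi$ is completely determined by $\phi(X)$, whose only constraint is to be a regular unipotent in $C_{G_{\mathbb{F}}}(X)$, and such regular unipotents biject canonically with $N_{G_{\mathbb{F}}}(C_{G_{\mathbb{F}}}(X))/C_{G_{\mathbb{F}}}(X)$. This immediately identifies (a) with the coset space, so what remains is to establish (a) $\leftrightarrow$ (b) and (a) $\leftrightarrow$ (c), which I would do via restriction maps whose bijectivity hinges on Serre (for injectivity) and on a density argument (for surjectivity). Throughout I fix $X \in \mathfrak{u}_{\mathbb{F}}$ a Richardson element of $B_{\mathbb{F}}$ with associated cocharacter $\theta$ so that $P(\theta) = B_{\mathbb{F}}$, as in \cref{isregular}.

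For (a) $\leftrightarrow$ (b), I send $\phi$ to $\phi|_{\mathfrak{u}_{\mathbb{F}}}$. The image lies in $U_{\mathbb{F}}$: since $\phi(X)$ is a regular unipotent in $C_{G_{\mathbb{F}}}(X)$, and this centralizer is contained in the unique Borel whose unipotent radical contains $X$ (namely $B_{\mathbb{F}}$), we get $\phi(X) \in U_{\mathbb{F}}$; then $B_{\mathbb{F}}$-equivariance sends the open dense orbit $B_{\mathbb{F}} \cdot X$ into $U_{\mathbb{F}}$, and the morphism $\phi|_{\mathfrak{u}_{\mathbb{F}}}$ propagates this to all of $\mathfrak{u}_{\mathbb{F}}$ by continuity. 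Injectivity is immediate from Serre. For surjectivity, given $\psi$ in (b), the element $\psi(X)$ is a regular unipotent centralized by $C_{B_{\mathbb{F}}}(X) = C_{G_{\mathbb{F}}}(X)$, hence lies in $C_{G_{\mathbb{F}}}(X)^0$; Serre then produces a $\phi$ in (a) with $\phi(X) = \psi(X)$, and $\phi|_{\mathfrak{u}_{\mathbb{F}}}$ and $\psi$ are two $B_{\mathbb{F}}$-equivariant morphisms agreeing at $X$, hence on the dense orbit $B_{\mathbb{F}} \cdot X$, and so equal everywhere.

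For (a) $\leftrightarrow$ (c), the restriction $\phi \mapsto \phi|_{C_{\gF}(X)}$ has image inside $C_{G_{\mathbb{F}}}(X)^0$: since $C_{G_{\mathbb{F}}}(X)$ acts trivially on $C_{\gF}(X)$ via the adjoint action (the identity component is commutative with Lie algebra $C_{\gF}(X)$, and the component group is central in $G_{\mathbb{F}}$), $G_{\mathbb{F}}$-equivariance of $\phi$ places $\phi(C_{\gF}(X))$ inside the $C_{G_{\mathbb{F}}}(X)$-fixed points, which equal $C_{G_{\mathbb{F}}}(X)$ by the self-centralizing property of regular nilpotent centralizers; being unipotent, these values lie in $C_{G_{\mathbb{F}}}(X)^0$. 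A dimension plus injectivity argument shows the restriction is an isomorphism, and $N_{G_{\mathbb{F}}}(C_{G_{\mathbb{F}}}(X))$-equivariance is inherited from $G_{\mathbb{F}}$-equivariance. Injectivity of the restriction map follows once more from Serre.

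The main obstacle is the surjectivity of (a) $\to$ (c), equivalently the claim that an $N_{G_{\mathbb{F}}}(C_{G_{\mathbb{F}}}(X))$-equivariant isomorphism $\eta: C_{\gF}(X) \to C_{G_{\mathbb{F}}}(X)^0$ is determined by $\eta(X)$. The key fact I would prove is that $N_{G_{\mathbb{F}}}(C_{G_{\mathbb{F}}}(X))$ acts transitively on the regular nilpotents of $C_{\gF}(X)$, which form an open dense subset: if $Y \in C_{\gF}(X)$ is regular nilpotent, then $C_{\gF}(X) \subseteq C_{\gF}(Y)$ with both of dimension equal to the rank of $G$, so $C_{\gF}(Y) = C_{\gF}(X)$; hence any $g \in G_{\mathbb{F}}$ with $\Ad(g)X = Y$ normalizes $C_{\gF}(X)$ and therefore normalizes $C_{G_{\mathbb{F}}}(X)$. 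With this density/transitivity in hand, an equivariant $\eta$ is determined on this dense orbit by $\eta(X)$, hence globally as a variety morphism; surjectivity of (a) $\to$ (c) then follows by taking the Springer isomorphism from Serre corresponding to $\eta(X)$ and comparing restrictions.
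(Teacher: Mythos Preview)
Your proposal is correct and follows essentially the same strategy as the paper: in each setting the relevant group acts with a single dense orbit through $X$, so an equivariant isomorphism is determined by the image of $X$, and the valid targets coincide because $C_{G_{\mathbb{F}}}(X)=C_{B_{\mathbb{F}}}(X)=C_{N_{G_{\mathbb{F}}}(C_{G_{\mathbb{F}}}(X))}(X)$. The paper compresses this into a few lines by directly comparing the three parameter sets, whereas you organize it via explicit restriction maps and spell out the self-centralizing property and the transitivity of $N_{G_{\mathbb{F}}}(C_{G_{\mathbb{F}}}(X))$ on the regular locus of $C_{\gF}(X)$; the only small point left implicit in your surjectivity step for (b) is that $\psi(X)$ is regular unipotent, which follows since $\psi$ carries the dense $B_{\mathbb{F}}$-orbit of $X$ to a dense $B_{\mathbb{F}}$-orbit in $U_{\mathbb{F}}$.
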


\begin{proof}
If we intersect the regular nilpotent orbit in $\mathcal{N}(\gF)$ with $\mathfrak{u}_{\mathbb{F}}$ it results in a single, necessarily dense, $B_{\mathbb{F}}$-orbit (the Richardson orbit for $B_{\mathbb{F}}$).  The analogous statement holds for $C_{\gF}(X)$ and $N_{G_{\mathbb{F}}}(C_{G_{\mathbb{F}}}(X))$.  Thus, in each setting an equivariant isomorphism corresponds to sending $X$ to a regular unipotent element having the same centralizer inside the given subgroup of $G_{\mathbb{F}}$.  The equality of these isomorphisms then follows from the fact that
$$C_{G_{\mathbb{F}}}(X)=C_{B_{\mathbb{F}}}(X)=C_{N_{G_{\mathbb{F}}}(C_{G_{\mathbb{F}}}(X))}(X).$$
\end{proof}

We now return our attention to $G$ (though what follows also holds for $G_{\mathbb{F}}$ with $\mathbb{F}$ as above).  In \cite{MT}, McNinch and Testerman verified Serre's expectation that as an algebraic group $N_G(C_G(X))/C_G(X)$ is isomorphic to $\mathbb{G}_m \ltimes V$, with $V$ a connected unipotent group of dimension $r-1$ ($r$ being the rank of $G$).  Thus, the ring of functions on the variety of all Springer isomorphisms for $G$ is isomorphic to
$$\Bbbk[x_1,{x_1}^{-1},x_2,x_3,\ldots,x_r].$$

For $SL_n$, the $\Bbbk$-points of this variety can be seen directly.  Every Springer isomorphism is given by the map sending $X$ to $I_n + a_1X + \cdots a_{n-1}X^{n-1}$, $a_1 \in \Bbbk^{\times}$.  Moreover, it is evident in this case that this variety of Springer isomorphisms in fact arises as a scheme over $\mathbb{Z}$. 

This general recipe for a Springer isomorphism of $SL_n$ can be split into the composition of two morphisms.  The first is an $SL_n$-equivariant variety automorphism of $\mathcal{N}(\mathfrak{sl}_n)$ defined by the rule
$$X \mapsto a_1X + \cdots + a_{n-1}X^{n-1},$$
followed by the Springer isomorphism for $SL_n$ that sends $X$ to $I_n + X$.  Note that Lemma \ref{isregular} verifies that requiring $a_1 \in \Bbbk^{\times}$ is both necessary and sufficient for the first morphism to be an isomorphism.  This, then, shows us how to make a first level generalization of the construction for $SL_n$.

\begin{prop}\label{firstdescription}
Let $r$ be the rank of $G$.  Fix a regular nilpotent element $X \in \g$, an associated cocharacter $\theta$ of $X$, and a basis of weight vectors $C_{\g}(X)$ as in Lemma \ref{weightsofcenter}.  Given any particular Springer isomorphism $\phi_0$ for $G$, the variety of Springer isomorphisms for $G$ identifies with the variety
$$\{(a_1,a_2,\ldots,a_r), \mid a_i \in \Bbbk, a_1 \in \Bbbk^{\times}\},$$
by sending the $r$-tuple $(a_1,a_2,\ldots,a_{r})$ to the unique Springer isomorphism $\phi$ with
$$\phi(X) = \phi_0(a_1X + a_2X_2 + \cdots + a_rX_r).$$
\end{prop}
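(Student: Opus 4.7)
The plan is to string together three results already established in the paper. By Serre's observation recalled at the start of this section, a Springer isomorphism $\phi$ is determined by $\phi(X)$, which may be any regular unipotent element of $C_G(X)$. By Lemma \ref{springeroversubgroups}(c), applied to the fixed $\phi_0$, there is an $N_G(C_G(X))$-equivariant isomorphism of varieties $\phi_0 \colon C_{\g}(X) \xrightarrow{\sim} C_G(X)^0$. And by Lemma \ref{isregular}, together with the basis from Lemma \ref{weightsofcenter}, an element $\sum a_i X_i \in C_\g(X)$ is regular nilpotent precisely when $a_1 \neq 0$.

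Here is how these combine. Given any Springer isomorphism $\phi$, the element $\phi(X)$ is a regular unipotent in $C_G(X)$, and since $C_G(X)^0$ is a connected unipotent group of dimension $r$ that contains every unipotent element of $C_G(X)$, we have $\phi(X) \in C_G(X)^0$. The isomorphism of Lemma \ref{springeroversubgroups}(c) lets us write $\phi(X) = \phi_0(Y)$ for a unique $Y \in C_\g(X)$; because $\phi_0$ is $N_G(C_G(X))$-equivariant and takes the regular nilpotent orbit in $C_\g(X)$ to the regular unipotent orbit in $C_G(X)^0$, the element $Y$ must itself be regular nilpotent. Expanding $Y = a_1 X + a_2 X_2 + \cdots + a_r X_r$ in the basis of Lemma \ref{weightsofcenter} and invoking Lemma \ref{isregular} then produces the unique tuple with $a_1 \in \Bbbk^\times$. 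The procedure is evidently reversible: any tuple with $a_1 \neq 0$ yields a regular nilpotent $Y = \sum a_i X_i$, so $\phi_0(Y)$ is a regular unipotent in $C_G(X)^0 \subseteq C_G(X)$, and Serre's observation furnishes a unique Springer isomorphism $\phi$ sending $X$ to $\phi_0(Y)$.

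The only subtle point I anticipate is upgrading this bijection of $\Bbbk$-points to an isomorphism of varieties, matching the structure on the space of Springer isomorphisms coming from $N_G(C_G(X))/C_G(X) \cong \mathbb{G}_m \ltimes V$ recalled above from \cite{MT}. This is essentially formal: the assignment $(a_1, \ldots, a_r) \mapsto \phi_0(\sum a_i X_i)$ is manifestly a morphism $\mathbb{G}_m \times \mathbb{A}^{r-1} \to C_G(X)^0$ onto the open subvariety of regular unipotents, and its composition with Serre's morphism to $N_G(C_G(X))/C_G(X)$ is a morphism of smooth $r$-dimensional varieties whose inverse is visibly the one that sends a Springer isomorphism $\phi$ to the coordinates of $\phi_0^{-1}(\phi(X))$ in the basis of Lemma \ref{weightsofcenter}.
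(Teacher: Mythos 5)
Your proposal is correct and follows essentially the same route as the paper: the paper's (much terser) proof likewise reduces everything to Serre's observation that a Springer isomorphism is determined by the image of $X$, to Lemma \ref{isregular} for the condition $a_1 \neq 0$, and to the fact that $\phi_0$ is a Springer isomorphism (your invocation of Lemma \ref{springeroversubgroups}(c) just makes the last step explicit). Your added remarks on surjectivity and the variety structure are consistent with the discussion preceding the proposition and introduce no gaps.
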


\begin{proof}
We only need to show that $\phi_0(a_1X + a_2X_2 + \cdots + a_rX_r)$ is a regular unipotent element in $C_G(X)$.  But this follows from Lemma \ref{isregular} and the fact that $\phi_0$ is a Springer isomorphism.
\end{proof}

This description can be parsed out into finer detail.  Continuing with the notation above, consider the mapping
$$\gamma_i:G.X \rightarrow G.X_i$$
given by sending $g.X$ to $g.X_i$, for all $g \in G$.  This is well defined because $C_G(X) \le C_G(X_i)$.  Moreover, it is a morphism of varieties, as our assumption on the characteristic guarantees that each nilpotent orbit is isomorphic to the quotient of $G$ by the corresponding stabilizer subgroup (work here with $GL_n$ rather than $SL_n$ if $p \mid n$).  Thus, $\gamma_i$ is given from the sequence of morphisms
$$G.X \xrightarrow{\sim} G/C_G(X) \rightarrow G/C_G(X_i) \xrightarrow{\sim} G.X_i.$$
By normality, $\gamma_i$ extends uniquely to a $G$-equivariant morphism from $\Ng$ to $\Ng$ that we will also denote as $\gamma_i$.  In this notation, the statement in the previous proposition could be expressed as saying that $\phi$ is the Springer isomorphism such that for every $Y \in \Ng$, $$\phi(Y)=\phi_0(a_1\gamma_1(Y) + \cdots + a_r\gamma_r(Y)).$$
The following analysis will come in handy later, and shows that each $\gamma_i$ behaves nicely with respect to cocharacters of $G$. 

\begin{lemma}\label{likemultilinear}
Let $k_i$ be the $i$-th exponent of the Weyl group of $G$, and let $\tau$ be a cocharacter of $G$.  If $Y \in \g(\tau;z)$, then $\gamma_i(Y) \in \g(\tau;z k_i)$.
\end{lemma}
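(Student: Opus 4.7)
My plan is to reduce the lemma to the homogeneity identity
$$\gamma_i(cY) = c^{k_i}\,\gamma_i(Y), \qquad c \in \Bbbk,\ Y \in \Ng.$$
Granting this, the lemma follows immediately from the $G$-equivariance of $\gamma_i$: for $Y \in \Ng \cap \g(\tau;z)$ and $t \in \mathbb{G}_m$,
$$\tau(t).\gamma_i(Y) = \gamma_i(\tau(t).Y) = \gamma_i(t^z Y) = t^{z k_i}\,\gamma_i(Y),$$
so $\gamma_i(Y) \in \g(\tau; z k_i)$.

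To establish the homogeneity identity, I observe that both sides are morphisms of varieties $\mathbb{A}^1 \times \Ng \to \Ng$, so it suffices to verify the equality on a Zariski-dense subset. I check it on pairs $(c,Y)$ with $c \in \Bbbk^\times$ and $Y$ in the regular nilpotent orbit $G.X$, which is open and dense in $\Ng$. Write such a $Y$ as $Y = g.X$ and put $\theta_Y := g\theta g^{-1}$, which is an associated cocharacter of $Y$. Then $Y \in \g(\theta_Y;2)$, and since $X_i \in \g(\theta;2k_i)$ by \cref{weightsofcenter} and $\gamma_i(Y)=g.X_i$, also $\gamma_i(Y) \in \g(\theta_Y; 2k_i)$. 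Because $\Bbbk$ is algebraically closed, I may pick $s \in \Bbbk^\times$ with $s^2 = c$. Then $cY = s^2 Y = \theta_Y(s).Y$, and $G$-equivariance of $\gamma_i$ gives
$$\gamma_i(cY) = \gamma_i(\theta_Y(s).Y) = \theta_Y(s).\gamma_i(Y) = s^{2k_i}\,\gamma_i(Y) = c^{k_i}\,\gamma_i(Y),$$
as required.

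The only ingredient that needs any real argument is the homogeneity identity; everything else is purely formal. The crux of that identity is the observation that scalar multiplication by any $c \in \Bbbk^\times$ on a regular nilpotent element can be realized by the action of the associated cocharacter evaluated at a square root of $c$, which is available because $\Bbbk$ is algebraically closed. I do not foresee a genuine obstacle beyond making sure the density argument is set up correctly, i.e.\ that both sides of the homogeneity relation are genuinely polynomial in $(c,Y)$ so that agreement on the dense open subset $\Bbbk^\times \times G.X$ forces agreement on all of $\mathbb{A}^1 \times \Ng$.
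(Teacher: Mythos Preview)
Your proof is correct and follows essentially the same route as the paper: establish the homogeneity identity $\gamma_i(cY)=c^{k_i}\gamma_i(Y)$ by realizing scalar multiplication on a regular nilpotent via the associated cocharacter evaluated at a square root of $c$, extend by density of the regular orbit, and then combine with $G$-equivariance of $\gamma_i$. Your final deduction is slightly more streamlined than the paper's (which first argues that $\gamma_i(Y)$ lies in \emph{some} $\tau$-weight space before identifying the weight), but the substance is identical.
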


\begin{proof}
Let $a \in \mathbb{G}_m$.  By $G$-equivariance, we have that
$$\gamma_i(a^2X)=a^{2k_i}\gamma_i(X_i).$$
Replacing $a$ with $\sqrt{a}$, it immediately follows from $G$-equivariance that for all $g \in G$,
$$\gamma_i(a(g.X))=a^{k_i}\gamma_i(g.X).$$
This means that the following diagram of $G$-equivariant morphisms
\[
  \begin{tikzcd}
    \Ng \arrow{r}{a} \arrow{d}{\gamma_i} & \Ng \arrow{d}{\gamma_i}\\
    \Ng \arrow{r}{a^{k_i}} & \Ng,
  \end{tikzcd}
\]
where the horizontal maps are scalar multiplication by the indicated elements, commutes on the regular orbit, hence by density on all of $\Ng$.  In summary, for every $Y \in \Ng$ and $a \in \mathbb{G}_m$ we have an equality
\begin{equation}\label{rescaling}
\gamma_i(aY)=a^{k_i}\gamma_i(Y).
\end{equation}
In particular, we have $\gamma_i(\Bbbk Y)=\Bbbk (\gamma_i(Y))$.  Suppose now that $Y \in \g(\tau;z)$ for some $z\in \mathbb{Z}$.  Comparing the actions of $\tau(\mathbb{G}_m)$ on both sides, since $\tau(\mathbb{G}_m).\Bbbk Y = \Bbbk Y$, it follows that $\gamma_i(Y) \in \g(\tau;d)$ for some $d$.

By $\tau(\mathbb{G}_m)$-equivariance we have for each $s \in \mathbb{G}_m$,
$$\gamma_i(s^zY)=s^d\gamma_i(Y).$$
But by (\ref{rescaling}), we have
$$\gamma_i(s^zY)={(s^z)}^{k_i}\gamma_i(Y).$$
Therefore $d=zk_i$.
\end{proof}

\begin{cor}
Let $j \ge 0$ be an integer, and $\tau$ a cocharacter of $G$.  If $\mu$ is any $G$-equivariant variety automorphism of $\Ng$, then
$$\mu\left(\sum_{i \ge j} \g(\tau;i)\right) \subseteq \sum_{i \ge j} \g(\tau;i).$$ 
\end{cor}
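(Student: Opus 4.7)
The plan is in two steps: first, reduce to showing each of the $G$-equivariant morphisms $\gamma_l : \Ng \to \Ng$ (introduced just before Lemma~\ref{likemultilinear}) preserves $V_j := \sum_{i \ge j} \g(\tau;i)$; second, prove this via a twisted $\mathbb{G}_m$-limit argument that upgrades the single-weight-vector statement of Lemma~\ref{likemultilinear} to arbitrary sums. For the reduction, note that $\mu$, being a $G$-equivariant variety isomorphism of $\Ng$, preserves the (unique open) regular nilpotent orbit. Fix any Springer isomorphism $\phi_0$; then $\phi_0 \circ \mu$ is another Springer isomorphism for $G$, and Proposition~\ref{firstdescription} together with the explicit description $\phi(Y) = \phi_0(a_1\gamma_1(Y) + \cdots + a_r\gamma_r(Y))$ recorded after its proof yields scalars $a_1 \in \Bbbk^\times$, $a_2,\ldots,a_r \in \Bbbk$ with
\[
\mu(Y) \;=\; \sum_{l=1}^{r} a_l\,\gamma_l(Y) \qquad \text{for every } Y \in \Ng.
\]
Since $V_j$ is a $\Bbbk$-subspace, it therefore suffices to show $\gamma_l(V_j \cap \Ng) \subseteq V_j$ for each $l$.

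For the second step, for each integer $d \ge 0$ introduce the $\mathbb{G}_m$-action on $\g$ given by $\rho_d(s)\,Y := s^{-d}(\tau(s)\cdot Y)$. Writing $Y = \sum_i Y_i$ with $Y_i \in \g(\tau;i)$ gives $\rho_d(s)\,Y = \sum_i s^{i-d}Y_i$, so $V_d$ is exactly the locus where $\lim_{s \to 0}\rho_d(s)\,Y$ exists in $\g$, and $V_d \cap \Ng$ is closed and $\rho_d$-stable. Combining $G$-equivariance of $\gamma_l$ with the homogeneity $\gamma_l(cY)=c^{k_l}\gamma_l(Y)$ from equation~\eqref{rescaling} in the proof of Lemma~\ref{likemultilinear} one computes
\[
\gamma_l(\rho_j(s)\,Y) \;=\; s^{-j k_l}\,\tau(s)\cdot\gamma_l(Y) \;=\; \rho_{j k_l}(s)\,\gamma_l(Y).
\]
For $Y \in V_j \cap \Ng$, the left-hand side has a limit as $s \to 0$ by continuity of $\gamma_l$, hence so does the right-hand side; this forces $\gamma_l(Y) \in V_{j k_l}$. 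Since $j \ge 0$ and each exponent satisfies $k_l \ge 1$, we have $j k_l \ge j$ and therefore $V_{j k_l} \subseteq V_j$, giving $\gamma_l(Y) \in V_j$ as desired.

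The main obstacle — and the reason this is a corollary needing genuine extra work rather than a tautology from Lemma~\ref{likemultilinear} — is that the lemma only detects single weight vectors, whereas $V_j$ is a sum of many weight spaces and $\gamma_l$ is highly nonlinear. The $\mathbb{G}_m$-limit reformulation of $V_j$ as an attracting set is precisely what bridges that gap, and the two numerical hypotheses $j \ge 0$ and $k_l \ge 1$ are both essential for the final inclusion $V_{j k_l} \subseteq V_j$.
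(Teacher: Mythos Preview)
Your argument is correct and follows the same overall structure as the paper: both reduce to the individual maps $\gamma_l$ via the decomposition $\mu = a_1\gamma_1 + \cdots + a_r\gamma_r$, and both then appeal to the behaviour of each $\gamma_l$ with respect to the $\tau$-grading.

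The difference is in the second step. The paper simply says that the result ``follows by the previous lemma which shows that each $\gamma_i$ has this property,'' i.e.\ it treats Lemma~\ref{likemultilinear} as already establishing that $\gamma_l$ preserves $V_j=\sum_{i\ge j}\g(\tau;i)$. As you correctly observe, the lemma literally only handles a single weight vector, and since $\gamma_l$ is nonlinear this does not tautologically extend to sums. Your attracting-set reformulation of $V_j$ together with the intertwining identity $\gamma_l(\rho_j(s)Y)=\rho_{jk_l}(s)\gamma_l(Y)$ (which is exactly the combination of $G$-equivariance and the homogeneity relation~\eqref{rescaling}) is the clean way to close that gap, and the inequalities $j\ge 0$, $k_l\ge 1$ are precisely what make $V_{jk_l}\subseteq V_j$. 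So your proof is best viewed as a faithful elaboration of what the paper leaves implicit rather than a genuinely different route; it supplies the missing justification that the paper's one-line invocation of the lemma glosses over.
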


\begin{proof}
By the observations above, there are elements $a_i \in \Bbbk$, $a_1 \ne 0$, such that $\mu = a_1\gamma_1 + \cdots + a_r\gamma_r$.  The result then follows by the previous lemma which shows that each $\gamma_i$ has this property.
\end{proof}

\begin{remark}\label{GGGR}
We expect that with a bit of work, this result can be used to verify the suggestion by Taylor in \cite[Remark 4.7]{Ta} that ``most'' Springer isomorphisms for $G$ restrict to `Kawanaka isomorphisms' on the unipotent radicals of parabolic subgroups of $G$ (though, ``most'' must be restricted to those arising over $\mathbb{F}_p$ as there is a condition on compatibility with the Frobenius morphism).  See \textit{loc. cit.} for more on Kawanaka isomorphisms, and why it is important in the study of Generalized Gelfand-Graev representations to find one coming from a Springer isomorphism for $G$. 
\end{remark}

\subsection{Nondegenerate trace forms arising over $\Zp$}
Lemma \ref{likemultilinear} indicates that the morphism $\gamma_i$ behaves somewhat like the multiplicative power morphism
$$\mathcal{N}(\mathfrak{sl}_n) \rightarrow \mathcal{N}(\mathfrak{sl}_n)$$
that sends $X$ to $X^i$ (where, for $SL_n$, $i$ is the $i$-th exponent of the Weyl group).  We will now show that we can construct, for certain $i$, ``multiplicative power'' morphisms $m^i$ that are independent of a choice of basis for $C_{\g}(X)$ and also arise over $\Zp$.  We are able to define these maps by working with a fixed well-known representation of $G$ (or a group isogenous to $G$).  For a group of classical type this will be the natural representation, and for groups of exceptional type we will use the adjoint representation.  These representations were utilized previously by Bardsley and Richardson to give another proof of the existence of Springer isomorphisms \cite{BR}.

The key feature of these representations is that the trace form on the ambient matrix algebra restricts to a non-degenerate symmetric bilinear form on the embedded Lie algebra, splitting the matrix algebra into a direct sum of the Lie subalgebra and its orthogonal complement.  In order to move back and forth between characteristics, it is a crucial point that the non-degeneracy of the trace form mod $p$ leads to a splitting of the matrix algebra at the level of $\Zp$-modules.  We will see that this leads to a particularly nice lifting of the $[p]$-mapping in characteristic $p$.

Let $G_{\Zp}$ be a split $\Zp$-form of $G$ as in Section \ref{chevalley}.

\begin{prop}\label{splitoverZp}
Suppose there is a closed embedding of affine group schemes over $\Zp$,
$$\varphi:G_{\Zp} \rightarrow GL_{n,\Zp},$$
with the property that over $\mathbb{F}_p$, the trace form on $\mathfrak{gl}_{n,\mathbb{F}_p}$ is non-degenerate when restricted to $d\varphi(\gFp)$.  Then the adjoint action of $G_{\Zp}$ on $\mathfrak{gl}_{n,\Zp}$ splits into a direct sum of $G_{\Zp}$-modules
$$\mathfrak{gl}_{n,\Zp} \cong d\varphi(\gZp) \oplus \mathfrak{m}_{\Zp},$$
where $\mathfrak{m}_{\Zp}$ is the orthogonal complement to $d\varphi(\gZp)$ under the trace form and contains the identity $n \times n$-matrix $I_n$.
\end{prop}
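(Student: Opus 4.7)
The plan is to lift the non-degeneracy hypothesis from $\mathbb{F}_p$ to $\Zp$, then apply the standard splitting argument for an invariant non-degenerate bilinear form, and finally verify $I_n \in \mathfrak{m}_{\Zp}$ using that $G$ has no nontrivial characters.

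First I would upgrade non-degeneracy from $\mathbb{F}_p$ to $\Zp$. Since $\gZp$ is a free $\Zp$-module of finite rank and $\varphi$ is a closed embedding, $d\varphi(\gZp)$ is also free of finite rank. Fix a $\Zp$-basis and let $M$ be the Gram matrix of the restricted trace form; non-degeneracy over $\Zp$ amounts to $\det(M)\in \Zp^{\times}$, equivalently $\det(M)\not\equiv 0 \pmod{p}$, which is precisely the hypothesis about non-degeneracy on $d\varphi(\gFp)$. With this in hand, I define $\mathfrak{m}_{\Zp}$ as the orthogonal complement of $d\varphi(\gZp)$ inside $\mathfrak{gl}_{n,\Zp}$ under the trace form. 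To prove $\mathfrak{gl}_{n,\Zp}=d\varphi(\gZp)\oplus \mathfrak{m}_{\Zp}$ as $\Zp$-modules: for any $X\in\mathfrak{gl}_{n,\Zp}$, the functional $W\mapsto \text{tr}(XW)$ on $d\varphi(\gZp)$ is represented, via non-degeneracy, by a unique $Y\in d\varphi(\gZp)$, so $X=Y+(X-Y)$ with $X-Y\in\mathfrak{m}_{\Zp}$; uniqueness of this decomposition again comes from non-degeneracy.

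Next I would check that the summands are $G_{\Zp}$-submodules. The trace form on $\mathfrak{gl}_{n,\Zp}$ is $GL_{n,\Zp}$-invariant (conjugation preserves traces of products), so it is $G_{\Zp}$-invariant via $\varphi$. Since $\varphi$ is a group scheme homomorphism, $d\varphi$ intertwines the adjoint actions, so $d\varphi(\gZp)$ is a $G_{\Zp}$-submodule. Functorially on any $\Zp$-algebra $R$: for $g\in G_{\Zp}(R)$, $Z\in \mathfrak{m}_{\Zp}\otimes R$, and $W\in d\varphi(\gZp)\otimes R$, one has $\text{tr}(\Ad(\varphi(g))Z\cdot W)=\text{tr}(Z\cdot \Ad(\varphi(g)^{-1})W)=0$ because $\Ad(\varphi(g)^{-1})W\in d\varphi(\gZp)\otimes R$. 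Hence $\mathfrak{m}_{\Zp}$ is also a $G_{\Zp}$-submodule, and the decomposition is one of $G_{\Zp}$-modules.

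Finally, to see that $I_n\in\mathfrak{m}_{\Zp}$, I need $\text{tr}(d\varphi(X))=0$ for all $X\in\gZp$. The composition $\det\circ\varphi\colon G_{\Zp}\to\mathbb{G}_{m,\Zp}$ is a character. Base changing to $\mathbb{Q}$, the group $G_{\mathbb{Q}}$ is semisimple and has no nontrivial characters, so $\det\circ\varphi$ becomes trivial over $\mathbb{Q}$. Applying Proposition \ref{factorsthrough} to the closed subscheme $\{1\}\subseteq \mathbb{G}_{m,\Zp}$, together with the fact that $\Zp[G_{\Zp}]$ is a free $\Zp$-module (from Section \ref{chevalley}), forces $\det\circ\varphi=1$ already over $\Zp$. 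Differentiating at the identity gives $\text{tr}\circ d\varphi=0$, which is exactly the pairing $B(I_n,d\varphi(X))$. The main obstacle I anticipate is the jump from $\mathbb{F}_p$-non-degeneracy to $\Zp$-non-degeneracy and ensuring the splitting is functorial in $G_{\Zp}$; both are resolved by the Gram-determinant-is-a-unit principle and by the invariance of the trace form under $\Ad$.
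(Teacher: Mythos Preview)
Your proof is correct and, in one respect, more direct than the paper's. Both you and the paper lift non-degeneracy from $\mathbb{F}_p$ to $\Zp$ by the same Gram-determinant-is-a-unit principle (the paper cites \cite{B} for this), and both define $\mathfrak{m}_{\Zp}$ as the orthogonal complement. The genuine difference lies in establishing $G_{\Zp}$-equivariance of the splitting: you argue directly and functorially that the trace form is $\Ad$-invariant, so the orthogonal complement of the $G_{\Zp}$-stable submodule $d\varphi(\gZp)$ is automatically $G_{\Zp}$-stable. The paper instead takes a detour through characteristic zero: it composes $\varphi$ with the adjoint representation to obtain $\tilde\varphi\colon G_{\Zp} \to GL(\mathfrak{gl}_{n,\Zp})$, observes that over $\mathbb{C}$ this factors through the closed subgroup $GL(d\varphi(\gC)) \times GL(\mathfrak{m}_{\mathbb{C}})$, and then invokes Proposition~\ref{factorsthrough} to descend the factorization to $\Zp$. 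Your route is shorter and more elementary; the paper's route illustrates the factorization machinery that is used repeatedly elsewhere. For the claim $I_n \in \mathfrak{m}_{\Zp}$, the paper simply asserts that $d\varphi(\gZp)$ consists of trace-zero matrices, whereas you actually supply an argument via triviality of the character $\det\circ\varphi$ over $\mathbb{Q}$ and another application of Proposition~\ref{factorsthrough}; this is a clean justification of a point the paper leaves implicit.
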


\begin{proof}
The trace form on $\mathfrak{gl}_{n,\Zp}$ restricts to a symmetric bilinear form on the free $\Zp$-module $d\varphi(\gZp)$.  Because this form is nondegenerate over $\Fp$, it is a basic fact about local rings that the induced map
$$\gZp \rightarrow \text{Hom}_{\Zp}(\gZp,\Zp), \quad X \mapsto (d\varphi(X),d\varphi(\underline{\quad})),$$
is an isomorphism of $\Zp$-modules (cf. \cite{B}[Proposition 1.4]).  Thus, the elements in $\mathfrak{gl}_{n,\Zp}$ that are orthogonal to $d\varphi(\gZp)$ under this form constitute a $\Zp$-module complement.  Denote this complement as $\mathfrak{m}_{\Zp}$.

The adjoint representation of $GL_{n,\Zp}$, composed with $\varphi$,  defines a homomorphism of group schemes over $\Zp$,
$$\tilde{\varphi}:G_{\Zp} \rightarrow GL(\mathfrak{gl}_{n,\Zp}).$$
There is also a closed embedding of affine group schemes over $\Zp$,
$$GL(d\varphi(\gZp)) \times GL(\mathfrak{m}_{\Zp}) \rightarrow GL(\mathfrak{gl}_{n,\Zp}).$$
Over $\mathbb{C}$, it is easy to see that the orthogonal complement to $d\varphi(\gC)$ under the trace form on $\mathfrak{gl}_{n,\mathbb{C}}$ is the subspace $\mathfrak{m}_{\mathbb{C}} = \mathfrak{m}_{\Zp} \otimes_{\Zp} \mathbb{C}$.  It then follows that over $\mathbb{C}$ the morphism $\tilde{\varphi}_{\mathbb{C}}$ factors as
$$G_{\mathbb{C}} \rightarrow GL(d\varphi(\gC)) \times GL(\mathfrak{m}_{\mathbb{C}}) \hookrightarrow GL(\mathfrak{gl}_{n,\mathbb{C}}).$$
Because $\Zp[G_{\Zp}]$ is a free module over $\Zp$, by Proposition \ref{factorsthrough} the morphism over $\Zp$ also factors as
$$G_{\Zp} \rightarrow GL(d\varphi(\gZp)) \times GL(\mathfrak{m}_{\Zp})  \hookrightarrow GL(\mathfrak{gl}_{n,\Zp}),$$
proving that $\mathfrak{gl}_{n,\Zp}$ factors as a $G_{\Zp}$-module.  Finally, since $d\varphi(\gZp)$ consists of trace $0$ matrices, the identity matrix $I_n$ is orthogonal to $d\varphi(\gZp)$ under the trace form. 
\end{proof}

This last result says that for any commutative $\Zp$-algebra $R$, there is a decomposition
$$\mathfrak{gl}_{n,R} \cong d\varphi_R(\gR) \oplus \mathfrak{m}_{R},$$
which can equivalently be expressed as
$$\mathfrak{gl}_{n,\Zp} \otimes_{\Zp} R \cong (d\varphi(\gZp) \otimes_{\Zp} R) \oplus (\mathfrak{m}_{\Zp} \otimes_{\Zp} R).$$
We will often refer to the two projection morphisms simply as $\pi_{\mathfrak{g}}$ and $\pi_{\mathfrak{m}}$, without reference to the particular ring we are working over.  Observe that for any $X \in d\varphi(\gZp)$, we have
\begin{equation}\label{projectissame}
\pi_{\mathfrak{g}}(X \otimes_{\Zp} 1_R) = \pi_{\mathfrak{g}}(X) \otimes_{\Zp} 1_R.
\end{equation}

We now recall the following result due to Bardsley and Richardson.

\begin{prop}\label{BardselyRichardson}\cite[\S 9.3]{BR}
Let $\varphi$ be as in the previous proposition, and suppose that $\mathbb{F}$ is a field over $\Zp$.  Then the composite of morphisms
$$G_{\mathbb{F}} \xrightarrow{\varphi} GL_{n,\mathbb{F}} \xrightarrow{\pi_{\mathfrak{g}}} d\varphi(\gF)$$
restricts to a $G_{\mathbb{F}}$-equivariant isomorphism from $\mathcal{U}(G_{\mathbb{F}})$ to $\mathcal{N}(\gF)$.
\end{prop}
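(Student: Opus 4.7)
The plan is to use that $f := \pi_{\mathfrak{g}} \circ \varphi$ is $G_{\mathbb{F}}$-equivariant with trivial differential at the identity, and to globalize this local iso via root-system combinatorics followed by descent along the Springer/Grothendieck resolutions. Equivariance is immediate: $\varphi$ intertwines conjugation actions, and Proposition~\ref{splitoverZp} supplies $\pi_{\mathfrak{g}}$ as a $G_{\Zp}$-equivariant projection that base-changes to the required equivariance over $\mathbb{F}$ via~(\ref{projectissame}). Two basic computations then follow: $f(1) = \pi_{\mathfrak{g}}(I_n) = 0$ because $I_n \in \mathfrak{m}_{\Zp}$, and $df_1 = \pi_{\mathfrak{g}} \circ d\varphi = \mathrm{id}_{\gF}$ after identifying $d\varphi(\gF)$ with $\gF$, since $\pi_{\mathfrak{g}}$ restricts to the identity on its own target. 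To see that $f(\mathcal{U}(G_{\mathbb{F}})) \subseteq \mathcal{N}(\gF)$, I would conjugate any $u \in \mathcal{U}(G_{\mathbb{F}})$ into $U_{\mathbb{F}}$, pick a regular dominant cocharacter $\theta$, and apply the standard root-subgroup contraction $\mathrm{Int}(\theta(t))(u) \to 1$ as $t \to 0$; equivariance then forces $\mathrm{Ad}(\theta(t))(f(u)) \to 0$, placing $f(u)$ in the nilpotent cone.

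The crux is the restriction $f|_{U_{\mathbb{F}}} : U_{\mathbb{F}} \to \mathfrak{u}_{\mathbb{F}}$. Order $\Phi^+$ by height and use $U_{\mathbb{F}} \cong \prod_{\alpha \in \Phi^+} U_{\alpha,\mathbb{F}}$ to coordinatize $U_{\mathbb{F}}$ by functions $(x_\alpha)$; coordinatize $\mathfrak{u}_{\mathbb{F}}$ by the corresponding root vectors. Since $\pi_{\mathfrak{g}}$ commutes with $T_{\mathbb{F}}$-conjugation, the $\alpha$-component of $f|_{U_{\mathbb{F}}}$ is $T_{\mathbb{F}}$-equivariant of weight $\alpha$, so every monomial $\prod_\beta x_\beta^{n_\beta}$ appearing in it must satisfy $\sum_\beta n_\beta \beta = \alpha$. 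Because $\Phi$ is reduced, this forces each contributing $\beta$ to have $\mathrm{ht}(\beta) \leq \mathrm{ht}(\alpha)$, with equality holding only when the monomial is $x_\alpha$ itself. Combined with $df_1 = \mathrm{id}$, which pins the linear coefficient of $x_\alpha$ to $1$, the $\alpha$-component of $f|_{U_{\mathbb{F}}}$ has the form $x_\alpha + P_\alpha(x_\beta : \mathrm{ht}(\beta) < \mathrm{ht}(\alpha))$ for some polynomial $P_\alpha$. This is a triangular polynomial endomorphism of $\mathbb{A}^{|\Phi^+|}$ with unit diagonal, hence an automorphism, yielding a $B_{\mathbb{F}}$-equivariant isomorphism $U_{\mathbb{F}} \xrightarrow{\sim} \mathfrak{u}_{\mathbb{F}}$.

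For the descent: in separably good characteristic $\mathcal{N}(\gF) = G_{\mathbb{F}} \cdot \mathfrak{u}_{\mathbb{F}}$ and $\mathcal{U}(G_{\mathbb{F}}) = G_{\mathbb{F}} \cdot U_{\mathbb{F}}$, and the Springer/Grothendieck resolutions $G_{\mathbb{F}} \times^{B_{\mathbb{F}}} U_{\mathbb{F}} \to \mathcal{U}(G_{\mathbb{F}})$ and $G_{\mathbb{F}} \times^{B_{\mathbb{F}}} \mathfrak{u}_{\mathbb{F}} \to \mathcal{N}(\gF)$ are proper and birational with normal targets. The $B_{\mathbb{F}}$-equivariant iso of the previous paragraph lifts to an iso of the two twisted products, and standard descent (using normality of the targets together with the birational iso on the regular stratum) yields the required $G_{\mathbb{F}}$-equivariant iso $\mathcal{U}(G_{\mathbb{F}}) \xrightarrow{\sim} \mathcal{N}(\gF)$. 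The main obstacle is the triangularity argument in the second paragraph: the infinitesimal data $df_1 = \mathrm{id}$ together with $G_{\mathbb{F}}$-equivariance at the single fixed point $1$ give very little on their own, and one really needs the reduced-root-system combinatorics with the height filtration on $\Phi^+$ to upgrade local information into a genuine scheme-theoretic iso. Once this is done, the descent to $\mathcal{U}(G_{\mathbb{F}})$ is essentially formal.
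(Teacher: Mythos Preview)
The paper does not prove this proposition: it is quoted directly from Bardsley--Richardson \cite[\S 9.3]{BR} with no argument supplied, so there is no proof in the paper to compare against.

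Your argument is correct and self-contained. It is worth noting that your triangularity argument in the second paragraph is essentially the same device the paper uses later in the proof of Lemma~\ref{projectionisinvertible}, where $T$-equivariance and the height filtration on $\Phi^+$ are used to show $\pi_{\mathfrak{g}}: U_{\Zp} \to \mathfrak{u}_{\Zp}$ is an isomorphism. The difference is that the paper's version \emph{invokes} Proposition~\ref{BardselyRichardson} to deduce that the leading coefficients are units (by knowing the map is an isomorphism over $\mathbb{C}$ and over $\Bbbk$), whereas your observation $df_1 = \mathrm{id}$ pins those coefficients to $1$ directly. Your route is therefore cleaner and non-circular, and would in fact slightly streamline the paper's own proof of Lemma~\ref{projectionisinvertible}.

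Two small points. First, the paper's convention (Section~\ref{chevalley}) is that $B$ corresponds to the \emph{negative} root subgroups, so the sign of your contracting cocharacter should be adjusted accordingly; this is cosmetic. Second, your descent step can be made slightly sharper: since $\mathcal{U}(G_{\mathbb{F}})$ and $\mathcal{N}(\gF)$ are both affine and normal, once the commuting square with the Springer resolutions gives you that $f$ is proper (from $\pi_N \circ \tilde{f} = f \circ \pi_U$ with $\pi_U$ proper surjective) and birational (from the regular stratum), normality yields $f_* \mathcal{O}_{\mathcal{U}} = \mathcal{O}_{\mathcal{N}}$, which for affine schemes is already an isomorphism of coordinate rings. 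This avoids having to argue bijectivity on closed points separately, which is not quite as immediate from equivariance as one might hope (injectivity of $f$ does not follow formally from injectivity of $f|_U$ alone).
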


In Table \ref{table:2} we have fixed, for each root system, a representation over $\Zp$ satisfying the conditions in Proposition \ref{splitoverZp}.  It is well known in these cases that the representations each have non-degenerate trace forms mod $p$ for the primes listed.

\begin{table}[h!]
\centering
\begin{tabular}{ | c | c | c | c |}
\hline
Root System & Condition on prime & Isogeny Type & Representation\\ 
\hline
$A_n$ & $p \nmid (n+1)$ & $SL_{n+1}$ & Natural\\
\hline
$B_n$ & $p \ne 2$ & $SO_{2n+1}$ & Natural\\
\hline
$C_n$ & $p \ne 2$ & $Sp_{2n}$ & Natural\\
\hline
$D_n$ & $p \ne 2$ & $SO_{2n}$ & Natural\\
\hline
$E_6,E_7,E_8,F_4,G_2$ & $p$ good & Adjoint type & Adjoint\\
\hline
\end{tabular}
\bigskip
\caption{Representations Over $\Zp$ With Nondegenerate Trace Forms}
\label{table:2}
\end{table}

\vspace{0.1in}
\begin{remark}
Table \ref{table:2} does not cover $SL_n$ in all separably good characteristics, only in very good characteristic (such an embedding does not exist in non-very good characteristic).  However, the isomorphism given by Bardsley and Richardson for $SL_n$ in very good characteristic is the map sending a unipotent element $u$ to $u-I_n$, which of course is also defined in the not-very-good case, it just does not come about via a projection from $\mathfrak{gl}_n$ onto $\mathfrak{sl}_n$ in that situation.
\end{remark}

\begin{lemma}\label{projectionisinvertible}
The scheme morphism $\pi_{\mathfrak{g}}: G_{\mathbb{Z}_p} \rightarrow \gZp$ restricts to a $B_{\Zp}$-equivariant isomorphism of schemes $U_{\Zp} \xrightarrow{\sim} \mathfrak{u}_{\Zp}$.
\end{lemma}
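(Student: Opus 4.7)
The plan is in two stages: first, show that $\pi_{\mathfrak{g}}$ carries $U_{\Zp}$ into $\mathfrak{u}_{\Zp}$; second, show the resulting morphism is invertible, by combining $T_{\Zp}$-equivariance with a height ordering on $\Phi^+$.

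For the first stage I would apply Proposition \ref{factorsthrough} with $R=\Zp$, $S=\mathbb{C}$, $\mathcal{X}=U_{\Zp}$, $\mathcal{Z}=\gZp$, and $\mathcal{Y}=\mathfrak{u}_{\Zp}$. The coordinate ring $\Zp[U_{\Zp}]$ is a polynomial ring in $|\Phi^+|$ variables via the root group product, hence a free $\Zp$-module. Over $\mathbb{C}$, Proposition \ref{BardselyRichardson} says $\pi_{\mathfrak{g}}$ restricts to a $G_{\mathbb{C}}$-equivariant isomorphism $\mathcal{U}(G_{\mathbb{C}}) \xrightarrow{\sim} \mathcal{N}(\gC)$; its inverse is a Springer isomorphism, which by Lemma \ref{springeroversubgroups} restricts to a $B_{\mathbb{C}}$-equivariant isomorphism $\mathfrak{u}_{\mathbb{C}} \xrightarrow{\sim} U_{\mathbb{C}}$. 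Hence $\pi_{\mathfrak{g}}(U_{\mathbb{C}}) \subseteq \mathfrak{u}_{\mathbb{C}}$, and Proposition \ref{factorsthrough} lifts this factorization to $\Zp$. The $B_{\Zp}$-equivariance of the resulting map $\pi_{\mathfrak{g}}: U_{\Zp} \to \mathfrak{u}_{\Zp}$ is inherited from the $G_{\Zp}$-module splitting in Proposition \ref{splitoverZp}.

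For the second stage, order $\Phi^+ = \{\beta_1,\ldots,\beta_N\}$ by nondecreasing height. The root group product gives coordinates $t_1,\ldots,t_N$ on $U_{\Zp}$, and a Chevalley basis $\{e_{\beta_i}\}$ gives coordinates $s_1,\ldots,s_N$ on $\mathfrak{u}_{\Zp}$. Writing $\pi_{\mathfrak{g}}^*(s_i) = P_i(t) \in \Zp[t]$, the $T_{\Zp}$-equivariance forces each monomial $\prod_j t_j^{n_j}$ of $P_i$ to satisfy $\sum_j n_j \beta_j = \beta_i$ in the root lattice. Since no nontrivial $\mathbb{Z}_{\ge 0}$-combination of positive roots vanishes, if the total degree $\sum n_j$ is at least $2$ then $n_i = 0$ and every $\beta_j$ with $n_j>0$ has $\text{ht}(\beta_j) < \text{ht}(\beta_i)$. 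The linear part of $P_i$ is determined by the differential $d\pi_{\mathfrak{g}}|_e$; this is the identity on $\mathfrak{u}_{\Zp}$ because $\pi_{\mathfrak{g}}$ is the linear projection from $\mathfrak{gl}_{n,\Zp}$ onto $d\varphi(\gZp)$, which restricts to the identity on $d\varphi(\gZp)$. Therefore
$$P_i = t_i + R_i\bigl(t_j : \text{ht}(\beta_j) < \text{ht}(\beta_i)\bigr), \qquad R_i \in \Zp[t].$$

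This triangular structure permits an explicit polynomial inversion over $\Zp$. By induction on height: for $\beta_i$ simple, $P_i = t_i$ and so $t_i = s_i$; and if each $t_j$ with $\text{ht}(\beta_j) < h$ has been expressed as a polynomial in $\Zp[s_1,\ldots,s_N]$, then for $\text{ht}(\beta_i) = h$ the equation $s_i = t_i + R_i$ yields $t_i = s_i - R_i \in \Zp[s_1,\ldots,s_N]$. These formulas assemble to a comorphism $\psi^*: \Zp[t] \to \Zp[s]$ that is two-sided inverse to $\pi_{\mathfrak{g}}^*$, so $\pi_{\mathfrak{g}}$ is an isomorphism $U_{\Zp} \xrightarrow{\sim} \mathfrak{u}_{\Zp}$. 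The main obstacle is the root-lattice bookkeeping establishing the triangular form of $P_i$; once that is in place, the inversion is automatic.
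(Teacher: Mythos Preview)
Your argument is correct and follows the same two-stage architecture as the paper's proof: first apply Proposition~\ref{factorsthrough} (using Proposition~\ref{BardselyRichardson} and Lemma~\ref{springeroversubgroups} over $\mathbb{C}$) to land in $\mathfrak{u}_{\Zp}$, then use $T_{\Zp}$-equivariance and induction on root height to invert the triangular system. The one substantive difference is how you establish that the leading coefficient of $t_i$ in $P_i$ is a unit. The paper invokes Proposition~\ref{BardselyRichardson} a second time, over $\Bbbk$, arguing that if the coefficient reduced to $0$ mod $p$ then $\phi^*_{\Bbbk}$ would fail to be injective; you instead compute the differential of $\pi_{\mathfrak g}$ at the identity directly and find the coefficient is exactly $1$. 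Your route is slightly more elementary (it avoids the second appeal to the characteristic-$p$ isomorphism) and yields sharper information, though it does rely on the compatibility between the root-group coordinates on $U_{\Zp}$ and the Chevalley-basis coordinates on $\mathfrak{u}_{\Zp}$, which you should state explicitly.
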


\begin{proof}
The projection morphism restricts to a scheme morphism from $U_{\Zp}$ to $\gZp$.  This projection, over $\mathbb{C}$, restricts to an isomorphism from $U_{\mathbb{C}}$ to $\mathfrak{u}_{\mathbb{C}}$, as follows from the argument given in Lemma \ref{springeroversubgroups} and the fact that this projection restricts to the (inverse of) a Springer isomorphism for $G_{\mathbb{C}}$ by Proposition \ref{BardselyRichardson}.  Since $\Zp[U_{\Zp}]$ is a free $\Zp$-module (indeed, a polynomial ring over $\Zp$), the fact that $\pi_{\mathfrak{g}}$ takes $U_{\mathbb{C}}$ to $\mathfrak{u}_{\mathbb{C}}$ implies by Proposition \ref{factorsthrough} that the projection morphism restricts of a morphism of $\Zp$-schemes $\phi: U_{\Zp} \rightarrow \mathfrak{u}_{\Zp}$.  It is evidently $B_{\Zp}$-equivariant.

Proposition \ref{BardselyRichardson} assures that not only is $\phi_{\mathbb{C}}$ an isomorphism, but that $\phi_{\Bbbk}$ is also.  We have that $\Zp[U_{\Zp}]$ and $\Zp[\mathfrak{u}_{\Zp}]$ are polynomial rings over $\Zp$, graded by $T_{\Zp}$.  Since $B_{\Zp}$ consists of negative roots, we can index polynomial generators by the positive roots, so that
$$\Zp[U_{\Zp}] \cong \Zp[\{t_{\alpha}, \alpha \in \Phi^+\}]$$
and
$$\Zp[\mathfrak{u}_{\Zp}] \cong \Zp[\{u_{\alpha}, \alpha \in \Phi^+\}].$$
Now, the $T_{\Zp}$-equivariance means that $\phi^*$ must send $u_{\alpha}$ to a sum of elements of weight $\alpha$.  We know that $\phi^*$ is injective because $\phi^*_{\mathbb{C}}$ is injective.  To see that it is also surjective, we argue by induction on the height of a root $\alpha$ that $t_{\alpha}$ is in the image of $\phi^*$.  Indeed, if $\alpha$ is a simple root, then we have that $\phi^*(u_{\alpha})=ct_{\alpha}$ for some $c \in \Zp$.  Since $\phi^*_{\Bbbk}$ is injective, $c$ must be a unit in $\Zp$, hence $\phi^*(c^{-1}u_{\alpha})=t_{\alpha}$.  Suppose now that $\alpha$ has arbitrary height, and that $t_{\beta}$ is in the image of $\phi^*$ for all $\beta$ having height less than $\alpha$.  Then we have that
$$\phi^*(u_{\alpha})=bt_{\alpha} + \text{higher degree monomials},$$ 
for some $b \in \Zp$, that we can again argue must be a unit in $\Zp$.  By the induction hypothesis, the ``higher degree monomials'' (consisting of products of the $t_{\beta_i}$ for a sum of roots $\beta_i$ adding to $\alpha$) are in the image of $\phi^*$.  Thus $t_{\alpha}$ will also be in the image of $\phi^*$.  So, $\phi^*$ is bijective, hence $\phi$ is an isomorphism.
\end{proof}

\subsection{Multilinear maps and powers of elements in $\gZp$}

We now show that the representations listed in Table \ref{table:2} can be used to obtain multilinear maps from $(\gZp)^{\times i}$ to $\gZp$ (different from those coming  from repeated applications of the Lie bracket on $\gZp$).  These maps do not behave completely like the multilinear maps coming from the structure of an associative algebra (for example, they are not associative), but have enough features to do what we need.  In particular, they allow us to define multiplicative power maps on $\gZp$.

\begin{defn}
Let $G_{\Zp}$ and $\varphi$ be as in Table \ref{table:2}.  For each $i > 0$, we have a $\Zp$-multilinear morphism
$$m_i:(\gZp)^{\times i} \rightarrow \gZp$$
from the composition of morphisms
$$(\gZp)^{\times i} \xrightarrow{(d\varphi)^{\times i}} (\mathfrak{gl}_{n,\Zp})^{\times i} \xrightarrow{mult.} \mathfrak{gl}_{n,\Zp} \xrightarrow{(d\varphi)^{-1} \circ \pi_{\mathfrak{g}}} \gZp.$$
This allows us to define a morphism of schemes over $\Zp$
$$m^i:\gZp \rightarrow \gZp$$
defined by
$$m^i(X)=m_i(\underbrace{X,X,\ldots,X}_{i \textup{ times}}).$$
\end{defn}

\begin{prop}\label{allbasechangeswork}
Let $R$ be a commutative $\Zp$-algebra.  The following properties hold:
\begin{itemize}
\item[(a)] $m_i$ and $m^i$ are $G_{R}$-equivariant morphisms for every $i$.
\item[(b)] $[X,Y] = m_2(X,Y)-m_2(Y,X)$, for all $X,Y \in \gR$.
\item[(c)] $m^i(X) \in C_{\gR}(X)$ for all $X \in \gR$.
\item[(d)] For every $r \in R$ and $X \in \gR$, we have $m^i(rX) = r^i(m^i(X))$.  In particular, if $\tau$ is a cocharacter of $G$, and $X \in \g(\tau;z)$, then $m^i(X) \in \g(\tau;z i)$.
\item[(e)] $m^p$, as a morphism on $\g$, is just the $[p]$-th power map.  
\end{itemize}
\end{prop}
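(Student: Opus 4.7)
My plan is to prove parts (a), (b), and (d) directly from the definition of $m_i$ as a composition of three $G_R$-equivariant pieces; part (c) from the fact that the decomposition of Proposition \ref{splitoverZp} is stable under the adjoint action of $d\varphi(\gR)$; and part (e), which is the main content of the proposition, from the observation that $d\varphi$ is automatically a morphism of restricted Lie algebras. The remaining parts are essentially formal.

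For (a), I check $G_R$-equivariance factor by factor: $d\varphi$ is equivariant because $\varphi$ is a homomorphism of group schemes; the matrix multiplication $\mathfrak{gl}_{n,R} \times \mathfrak{gl}_{n,R} \to \mathfrak{gl}_{n,R}$ is $GL_{n,R}$-equivariant for the adjoint (conjugation) action; and $\pi_{\g}$ is $G_R$-equivariant by construction. For (b), since $d\varphi$ is a Lie algebra homomorphism, $d\varphi(X) d\varphi(Y) - d\varphi(Y) d\varphi(X) = d\varphi([X,Y])$ already lies in $d\varphi(\gR)$, so $\pi_{\g}$ is the identity on it; applying $(d\varphi)^{-1}$ to each piece and rearranging yields $[X,Y] = m_2(X,Y) - m_2(Y,X)$. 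For (d), the $R$-multilinearity of $m_i$ gives $m^i(rX) = r^i\, m^i(X)$ at once, and the cocharacter assertion follows from (a) by computing $\tau(a) \cdot m^i(X) = m^i(\tau(a) \cdot X) = m^i(a^z X) = a^{zi} m^i(X)$.

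For (c), the key point is that $\pi_{\g}$ commutes with the adjoint action of any $d\varphi(X)$, $X \in \gR$, on $\mathfrak{gl}_{n,R}$: the decomposition $\mathfrak{gl}_{n,R} = d\varphi(\gR) \oplus \mathfrak{m}_R$ is one of $G_R$-modules, and after differentiating it is stable under the $d\varphi(\gR)$-action by brackets, so both summands are stable under $[d\varphi(X),-]$. Since $d\varphi(X)$ commutes with its own powers in $\mathfrak{gl}_{n,R}$, applying $\pi_{\g}$ to this bracket relation gives $[d\varphi(X),\, \pi_{\g}(d\varphi(X)^i)] = \pi_{\g}([d\varphi(X), d\varphi(X)^i]) = 0$, which translates via $(d\varphi)^{-1}$ into $m^i(X) \in C_{\gR}(X)$.

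For (e), which I view as the main obstacle in that it is where actual structural input is used, I specialize to $R = \Bbbk$. Since $\varphi : G \to GL_n$ is a homomorphism of affine group schemes in characteristic $p$, its differential $d\varphi$ is automatically a homomorphism of restricted Lie algebras, so $d\varphi(X^{[p]}) = d\varphi(X)^p$ for every $X \in \g$. In particular $d\varphi(X)^p$ already lies in $d\varphi(\g)$, so $\pi_{\g}$ acts as the identity on it; composing with $(d\varphi)^{-1}$ yields $m^p(X) = X^{[p]}$. The only subtlety is to invoke this functoriality of the $[p]$-operation correctly; no further computation is needed.
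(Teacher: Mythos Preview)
Your proposal is correct and follows essentially the same route as the paper: each part is proved by the same mechanism (equivariance of the three factors for (a), linearity of $\pi_{\mathfrak{g}}$ applied to the commutator identity for (b), stability of the splitting under $[d\varphi(X),-]$ for (c), multilinearity for (d), and the fact that $d\varphi$ is a restricted Lie algebra map for (e)). The only minor divergence is in the cocharacter clause of (d): the paper appeals to the argument of Lemma~\ref{likemultilinear}, whereas your direct computation $\tau(a)\cdot m^i(X)=m^i(a^zX)=a^{zi}m^i(X)$ using (a) and the scaling property is a bit cleaner here, since for $m^i$ (unlike the $\gamma_i$) the homogeneity is immediate from multilinearity.
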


\begin{proof}
(a) This is just a consequence of the fact that both $\pi_{\mathfrak{g}}$ and multiplication in $\mathfrak{gl}_{n,\Zp}$ are $G_{\Zp}$-equivariant morphisms of schemes ($G_{\Zp}$ acting by conjugation via $\varphi$).

(b) This follows from the fact that
\begin{align*}
d\varphi([X,Y])= &\pi_{\mathfrak{g}}(d\varphi([X,Y]))\\
= & \pi_{\mathfrak{g}}(d\varphi(X)d\varphi(Y)-d\varphi(Y)d\varphi(X))\\
= & \pi_{\mathfrak{g}}(d\varphi(X)d\varphi(Y))- \pi_{\mathfrak{g}}(d\varphi(Y)d\varphi(X))).\\
\end{align*}

(c) The splitting
$$\mathfrak{gl}(\gZp) = d\varphi(\gZp) \oplus \mathfrak{m}_{\Zp}$$
is stable under the commutator action of $d\varphi(\gZp)$.  Since
$$[d\varphi(X),d\varphi(X)^i]=0,$$
it must also be the case that
$$[d\varphi(X),\pi_{\mathfrak{g}}(d\varphi(X)^i)]=0=[d\varphi(X),\pi_{\mathfrak{m}}(d\varphi(X)^i)].$$
Therefore, $[X,m^i(X)]=0$.

(d) The first statement follows from the fact that $d\varphi(rX)^i=r^i(d\varphi(X)^i)$, and that every other morphism in the definition of $m^i$ is $R$-linear (when considered over $R$).  The second fact follows from an argument as in the proof of Lemma \ref{likemultilinear}.

(e) Suppose now that $X \in \g$.  Since $d\varphi_{\Bbbk}$ is an embedding of $[p]$-restricted Lie algebras, we have
$$\pi_{\g}(d\varphi_{\Bbbk}(X)^p) = \pi_{\g}(d\varphi_{\Bbbk}(X^{[p]})) = d\varphi_{\Bbbk}(X^{[p]}).$$
From this it is clear that $m^p(X)=X^{[p]}$, verifying the claim.
\end{proof}

\begin{remark}\label{Delignelifting} The significance of lifting the $[p]$-mapping to $\Zp$ was first pointed out to us by Deligne, who had observed another way to do it via Springer's explicit computations in \cite{Sp2}.
\end{remark}

\section{Generalized Exponential Maps In Characteristic $p$}

Let $G$ be a simple algebraic group over $\Bbbk$ in separably good characteristic.

\subsection{Definition Of Generalized Exponential Maps}

We begin with a definition that is suggested by the statement of \cite[Theorem 4.1]{So}, though never explicitly stated as such.  Recall that a nilpotent element $X$ is said to have nilpotent order $p^m$ if $m$ is the smallest integer such that $X^{[p^m]} = 0$.  The group $\mathcal{W}_m$ is the group of Witt vectors over $\Bbbk$ of length $m$.

\begin{defn}\label{embeddingproperty}
A Springer isomorphism $\phi: \Ng \rightarrow \mathcal{U}(G)$ is called a generalized exponential map if for each integer $m$, and for each $X \in \Ng$ of nilpotence degree $p^m$, there is a closed embedding of algebraic groups
$$\phi_X: \mathcal{W}_m \rightarrow G$$
given by the map
$$(a_0,a_1,\ldots,a_{m-1}) \mapsto \phi(a_0X)\phi(a_1X^{[p]})\cdots\phi(a_{m-1}X^{[p^{m-1}]}).$$
\end{defn}

Note that a consequence of this definition is that if $\phi$ is such a Springer map, then it necessarily satisfies the relation
$$\phi(X^{[p]})=\phi(X)^p$$
for all $X$.  This adds an extra level of rigidity when considering issues related to the uniqueness of such maps (see Section \ref{uniqueness}).

We recall now the main theorem of \cite{So}, stated in terms of this definition.

\begin{theorem}\cite[Theorem 4.1]{So}
Generalized exponential maps exist in separably good characteristic.
\end{theorem}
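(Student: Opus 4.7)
The plan is to deduce the theorem from Theorem~B, reducing the general case to the isogeny classes covered by that theorem. Given a simple $G$ in separably good characteristic, fix a separable central isogeny $\pi:G'\to G$ with $G'$ one of $SL_n$, $Sp_{2n}$, $SO_n$, or an adjoint exceptional group. Since $\pi$ is a separable central isogeny, it restricts to $G'$-equivariant isomorphisms $\mathcal{N}(\mathfrak{g}')\xrightarrow{\sim}\Ng$ and $\mathcal{U}(G')\xrightarrow{\sim}\mathcal{U}(G)$ that intertwine the $[p]$-maps and send Witt-group subgroups to Witt-group subgroups. Hence it suffices to exhibit a generalized exponential map for $G'$, and I would take $\phi = \tilde{E}_p$ from Theorem~B.

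The remaining task is to check Definition~\ref{embeddingproperty} for this $\phi$. Fix $X\in\mathcal{N}(\mathfrak{g}')$ of nilpotence degree $p^m$ and consider
$$\phi_X:\mathcal{W}_m\to G',\qquad (a_0,\ldots,a_{m-1})\mapsto \prod_{i=0}^{m-1}\phi(a_iX^{[p^i]}).$$
First I would establish commutativity of the factors: Proposition~\ref{allbasechangeswork}(c) shows that every iterate $m^{p^j}(X)$ lies in $C_{\mathfrak{g}'}(X)$, so each term of the series defining $\phi(a_iX^{[p^i]})$ sits inside a common commutative subalgebra of $\mathfrak{g}'$; consequently all factors of $\phi_X$ mutually commute in $G'$, and $\phi_X$ factors through a commutative unipotent subgroup. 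This reduces the question to showing that $\phi_X$ is a group homomorphism with trivial scheme-theoretic kernel.

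I would then transport the problem into $GL_n$ via the embedding $\varphi$ of Table~\ref{table:2}: $X$ maps to a nilpotent matrix $N$ with $N^{p^m}=0$, and I would identify $\varphi\circ\phi_X$ with the classical Artin--Hasse Witt embedding
$$\iota:\mathcal{W}_m\hookrightarrow GL_n,\qquad (a_0,\ldots,a_{m-1})\mapsto \prod_{i=0}^{m-1} E_p(a_i N^{p^i}),$$
which is well known to be a closed embedding of algebraic groups (Section~\ref{Artinforclassical}, \cite[\S 7.5]{M}). By Proposition~\ref{allbasechangeswork}(e), $m^p$ coincides over $\Bbbk$ with $(-)^{[p]}$, so the series defining $\tilde{E}_p$ matches the series defining $E_p$ after projection onto $\mathfrak{g}'$; the residual $\mathfrak{m}$-components produced by iterating $\pi_\g$ at intermediate stages commute with $N$ (again by part~(c)) and with all the exponential factors, so they telescope away when the product is formed. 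Once $\varphi\circ\phi_X = \iota$ is established, the closed-embedding property of $\phi_X$ is automatic from that of $\iota$ and $\varphi$. The main obstacle, and the step that requires the most care, is precisely this identification---organizing the bookkeeping so that the projections $\pi_{\mathfrak{m}}$ accumulated at each iterate of $m^p$ genuinely cancel inside the exponential, rather than introducing corrections that spoil the Witt group law.
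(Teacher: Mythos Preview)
Your reduction via isogeny is basically sound (though note you need isogenies in \emph{both} directions: for instance $Spin_n$ sits \emph{above} $SO_n$, not below it, so you should phrase the transfer as ``separable isogenies induce isomorphisms on unipotent and nilpotent varieties compatible with the $[p]$-map,'' which works either way). The real problem is the identification $\varphi\circ\phi_X=\iota$ for exceptional types.

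That identification is false. Over $\Bbbk$, the element $E_p(d\varphi(X))\in GL_n$ is a polynomial in $N=d\varphi(X)$ whose intermediate powers $N^2,\ldots,N^{p-1}$ have nontrivial components in $\mathfrak{m}$ under the splitting $\mathfrak{gl}_n=d\varphi(\g)\oplus\mathfrak{m}$; there is no reason $E_p(N)$ should land in $\varphi(G)$. Your telescoping heuristic only controls what happens at the $p$-th power (where indeed $\pi_{\mathfrak m}(N^p)=0$ over $\Bbbk$), but it says nothing about the lower-degree terms of the Artin--Hasse series, and those are exactly what push $E_p(N)$ out of $\varphi(G)$. Equivalently: over $\mathbb{C}$ one has $\varphi(\tilde E_p(X))=\exp\bigl(d\varphi(X)+\pi_{\g}(N^p)/p\bigr)$, whereas $E_p(N)=\exp\bigl(N+N^p/p+N^{p^2}/p^2+\cdots\bigr)$; these differ already at the $N^p$ term by $\pi_{\mathfrak m}(N^p)/p$, and that discrepancy does not disappear upon reduction.

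The paper avoids this entirely. Rather than compare $\tilde E_p$ to the ambient $GL_n$ Artin--Hasse map, it verifies the Witt group law for $\phi_X$ \emph{directly over $\mathbb{C}$}: since $\tilde E_p(X)=\exp(X)\exp(m^p(X)/p)$ there (only two factors, because $p^2>h$ for good primes in exceptional type), one simply multiplies out $f(b_1,b_0)f(a_1,a_0)$ using the ordinary exponential law and checks it matches $f((b_1,b_0)+(a_1,a_0))$. This holds as an identity of $\Zp$-scheme morphisms by Proposition~\ref{factorsthrough}, hence over $\Bbbk$. If you want to salvage your approach, you should abandon the comparison with $E_p(N)$ and instead lift the computation to $\mathbb{C}$ via the $\exp$ factorization of $\tilde E_p$ itself.
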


\subsection{The Artin-Hasse Exponential For Classical Groups}\label{Artinforclassical}

The definition above (and the work in \cite{So}) was motivated by the following well-known fact for $GL_n$.  First, recall that the Artin-Hasse exponential series for $p$ is the formal power series
$$E_p(t) = \exp\left(t + \frac{t^p}{p} + \frac{t^{p^2}}{p^2} + \cdots\right).$$
The coefficients of this series lie in $\mathbb{Z}_{(p)}$ \cite{D}.

Suppose that $Y \in \mathfrak{gl}_{n,\Zp}$ is a nilpotent matrix.  There is some $m \ge 0$ that is minimal with respect to the property that $Y^{p^m}=0$.  We then get a closed embedding of group schemes over $\Zp$,
$$f_Y: \mathcal{W}_{m,\Zp} \rightarrow GL_{n,\Zp},$$
where for every commutative $\Zp$-algebra $A$ and every $(a_0,\ldots,a_{m-1}) \in \mathcal{W}_{m,A}$,
$$f_Y((a_0,\ldots,a_{m-1})) = E_p(a_0Y)\cdots E_p(a_{m-1}Y^{p^{m-1}}).$$

When working over $\mathbb{C}$ (or $\mathbb{Q}$ even), $E_p(Y)$ can be expressed as
\begin{equation}\label{Epfactor}
E_p(Y)=\exp(Y)\exp\left(\frac{Y^p}{p}\right)\cdots\exp\left(\frac{Y^{p^{\ell-1}}}{p^{\ell-1}}\right).
\end{equation}
Suppose now that we have a closed embedding of affine groups schemes $\varphi: G_{\Zp} \rightarrow GL_{n,\Zp}$.  Consider an element $X \in \gZp$ such that $X \otimes_{\Zp} 1_{\Bbbk}$ and $X \otimes_{\Zp} 1_{\mathbb{C}}$ are regular nilpotent elements in $\mathfrak{g}$ and $\mathfrak{g}_{\mathbb{C}}$ respectively (the latter element we will denote simply as $X$).  Let $m$ again be the minimal integer such that $d\varphi(X)^{p^m}=0$.  If, for each $i<m$, it holds that
$$d\varphi(X)^{p^i} \in d\varphi(\gC),$$
then by (\ref{Epfactor}) we have $E_p(d\varphi(X)) \in \varphi(G_{\mathbb{C}})$.  We then have that the morphism
$$f_{d\varphi(X)}: \mathcal{W}_{m,\mathbb{C}} \rightarrow GL_{n,\mathbb{C}}$$
factors through $\varphi(G_{\mathbb{C}})$, so that by Proposition \ref{factorsthrough} the morphism over $\Zp$ factors through $\varphi(G_{\Zp})$ (this is more or less the argument by McNinch in \cite[\S 7.5]{M}).

Over $\Bbbk$, $E_p$ also defines a Springer isomorphism for $GL_n$.  Since $d\varphi(X \otimes_{\Zp} 1_{\Bbbk})$ is carried into $\varphi(G)$, then by $\varphi(G)$-equivariance, and the density of the regular orbit in $d\varphi(\mathcal{N}(\g))$, it follows that this Springer isomorphism for $GL_n$ restricts to one for $\varphi(G)$.  It is, by definition, a generalized exponential map.

Now, if $G$ is a classical group in its natural representation, and $p \ne 2$ for types $B,C$, or $D$, then we have for every $X \in \gZp$ that $d\varphi(X)^{p^i}=d\varphi(m^{p^i}(X)) \in d\varphi(\gZp)$, so by the reasoning above we obtain a generalized exponential map for $G$ (this was covered in \cite{So}).

\subsection{A Modified Artin-Hasse Exponential For Types $E-G$}\label{modified}

We now adjust the argument in the previous subsection to obtain generalized exponential maps for groups having a root system of exceptional type.  This very straightforward adaptation is due to Deligne, who shared it in a private communication on which we were copied.  However, we give an alternate proof that the morphism is defined over $\Zp$, utilizing the work in Section \ref{constructions}.

The exponential map defines a canonical Springer isomorphism for $G_{\mathbb{C}}$.  For each $i$, let $m^{p^i}$ denote the morphism $(m^p)^i$.  If $X$ is a regular nilpotent element of $\gC$, and $\theta$ an associated cocharacter, then by properties (c) and (d) of Proposition \ref{allbasechangeswork}, it follows that $m^{p^i}(X)$ is an element in $C_{\gC}(X)$ that is in $\gC(\theta;2p^i)$.  If $p^i \ge h$, then this element is $0$.  Thus, there is some $i$ such that $m^{p^j}(X)=0$ for all $j \ge i$, and all $X \in \mathcal{N}(\gC)$.  With $\phi_0=\exp$, we obtain by Proposition \ref{firstdescription} a Springer isomorphism $\tilde{E}_p$ for $G_{\mathbb{C}}$ where
\begin{equation}\label{AHequation}
\tilde{E}_p(X)=\exp\left(X + \frac{m^p(X)}{p} + \frac{m^{p^2}(X)}{p^2} + \cdots\right).
\end{equation}
In the case of the groups of types $A-D$, $\tilde{E}_p$ is precisely the Springer isomorphism given by evaluating the Artin-Hasse exponential series on nilpotent matrices.

Now, in in all types ($p$ separably good) $\tilde{E}_p$ defines (and is equivalent to) a $B_{\mathbb{C}}$-equivariant isomorphism from $\mathfrak{u}_{\mathbb{C}}$ to $U_{\mathbb{C}}$, which we will also denote as $\tilde{E}_p$.  We will show that this morphism is defined over $\Zp$, and gives rise to a generalized exponential map in characteristic $p$.  First, we require the following technical lemma.

\begin{lemma}\label{Emeansdefined}
Let $G_{\Zp}$ and $\varphi$ be one of the groups and representations indicated in Table \ref{table:2}.  Then the morphism of schemes over $\mathbb{C}$,
$$\sigma: \gC \rightarrow \mathfrak{m}_{\mathbb{C}},$$
where 
$$\sigma(X)=\frac{1}{p}\pi_{\mathfrak{m}}(d\varphi(X)^p),$$
is defined over $\Zp$, and is $G_{\Zp}$-equivariant.
\end{lemma}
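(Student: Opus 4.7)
My plan is to produce $\sigma$ as an explicit $\Zp$-scheme morphism by observing that the seemingly $\mathbb{Q}$-valued expression $\pi_{\mathfrak{m}}(d\varphi(X)^p)/p$ is in fact $\Zp$-integral because the numerator vanishes identically modulo $p$. The first step is to define the $\Zp$-scheme morphism $\tau : \gZp \to \mathfrak{m}_{\Zp}$ by $\tau(X) = \pi_{\mathfrak{m}}(d\varphi(X)^p)$. This is well-defined as a composition of $G_{\Zp}$-equivariant $\Zp$-scheme morphisms: $d\varphi$, the $p$-th power on $\mathfrak{gl}_{n,\Zp}$ (which is $G_{\Zp}$-equivariant because $G_{\Zp}$ acts by conjugation through $\varphi$), and the module projection $\pi_{\mathfrak{m}}$ from Proposition \ref{splitoverZp}.

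The second step is to check that $\tau_{\Bbbk} \equiv 0$. Over $\Bbbk$, $d\varphi_{\Bbbk}$ is a homomorphism of restricted Lie algebras, so $d\varphi_{\Bbbk}(X)^p = d\varphi_{\Bbbk}(X^{[p]}) \in d\varphi_{\Bbbk}(\g)$, and applying $\pi_{\mathfrak{m}}$ annihilates it. Hence $\tau_{\Bbbk}$ is the zero morphism (this is essentially the content of Proposition \ref{allbasechangeswork}(e) reread as a statement about $\pi_{\mathfrak{m}}$ rather than $\pi_{\g}$).

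The third step converts this vanishing into divisibility by $p$. Fix a $\Zp$-basis of the free module $\mathfrak{m}_{\Zp}$ with dual coordinates $y_1, \ldots, y_e$, so that $\Zp[\mathfrak{m}_{\Zp}] = \Zp[y_1, \ldots, y_e]$. Each pullback $\tau^*(y_i)$ lies in $\Zp[\gZp]$ and reduces to zero modulo $p$, so $\tau^*(y_i) = p g_i$ for a unique $g_i \in \Zp[\gZp]$ (using that $\Zp[\gZp]$ is a free, and in particular $p$-torsion-free, $\Zp$-module). I would then define $\sigma: \gZp \to \mathfrak{m}_{\Zp}$ by the $\Zp$-algebra map $\sigma^*: \Zp[\mathfrak{m}_{\Zp}] \to \Zp[\gZp]$ sending $y_i$ to $g_i$, and extending multiplicatively; base-changing to $\mathbb{C}$ and inverting $p$ recovers the prescribed formula.

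Finally, $G_{\Zp}$-equivariance of $\sigma$ comes almost for free. The two $\Zp$-morphisms $G_{\Zp}\times \gZp \rightrightarrows \mathfrak{m}_{\Zp}$ expressing equivariance of $\sigma$ agree after composition with scalar multiplication by $p$ on the target, since that composition expresses the already-established equivariance of $\tau = p\sigma$; the $p$-torsion freeness of $\Zp[G_{\Zp}\times\gZp]$ then forces the two morphisms themselves to coincide. The delicate point is really the third step, i.e.\ translating a scheme-theoretic vanishing modulo $p$ into an honest factorization $\tau = p\sigma$ over $\Zp$, but this reduces to the observation that $\mathfrak{m}_{\Zp}$ is a vector space scheme with a free polynomial coordinate ring, so we may reason componentwise.
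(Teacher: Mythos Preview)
Your proposal is correct and follows essentially the same approach as the paper: define the $\Zp$-morphism $\tau=\pi_{\mathfrak m}\circ(\,\cdot\,)^p\circ d\varphi$ (the paper calls it $\psi$), observe it vanishes over $\Bbbk$ because $d\varphi_{\Bbbk}$ respects the $[p]$-structure, deduce that each $\tau^*(y_i)$ lies in $p\,\Zp[\gZp]$, and define $\sigma$ by dividing through. Your equivariance argument via $p$-torsion freeness of $\Zp[G_{\Zp}\times\gZp]$ is a slightly cleaner rephrasing of the paper's passage to $\mathbb{C}$, but the content is the same.
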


\begin{proof}
The free $\Zp$-modules $\gZp$ and $\mathfrak{m}_{\Zp}$ are affine schemes over $\Zp$, with polynomial coordinate rings
$$\Zp[\gZp] \cong \Zp[x_1,\ldots,x_k], \qquad \Zp[\mathfrak{m}_{\Zp}] \cong \Zp[y_1,\ldots,y_{n^2-k}],$$
where $k$ is the rank of the free $\Zp$-module $\gZp$.

There is a composite of morphisms over $\Zp$,
$$\gZp \xrightarrow{d\varphi} \mathfrak{gl}_{n,\Zp} \xrightarrow{Y\mapsto Y^p} \mathfrak{gl}_{n,\Zp} \xrightarrow{\pi_{\mathfrak{m}}} \mathfrak{m}_{\Zp}.$$
Denote this composite as $\psi$.  Recalling the definition $m^p$, we observe that $\psi(X)=d\varphi(X)^p-d\varphi(m^p(X))$.  The morphism $\psi$ is equivalent to its comorphism
$$\psi^*: \Zp[y_1,\ldots,y_{n^2-k}] \rightarrow \Zp[x_1,\ldots,x_k].$$
Over $\mathbb{C}$, this sequence of morphisms can be further extended by the $\mathbb{C}$-linear map on $\mathfrak{m}_{\mathbb{C}}$,
$$\mathfrak{m}_{\mathbb{C}} \xrightarrow{Y \mapsto \frac{1}{p}Y} \mathfrak{m}_{\mathbb{C}}.$$
The composite of all of these morphisms is defined over $\mathbb{C}$, and to show it is defined over $\Zp$, we must show that the resulting composite of ring maps
$$\mathbb{C}[y_1,\ldots,y_{n^2-k}] \xrightarrow{y_i \mapsto \frac{y_i}{p}} \mathbb{C}[y_1,\ldots,y_{n^2-k}] \xrightarrow{\psi^*} \mathbb{C}[x_1,\ldots,x_k],$$
sends each $y_i$ to $\Zp[x_1,\ldots,x_k]$.  This is equivalent to checking that each $\psi^*(y_i)$ has image in $p\Zp[x_1,\ldots,x_k]$.

Working over $\Bbbk$, the morphism $\psi_{\Bbbk}$ sends $\g \rightarrow \{0\}$, precisely because $d\varphi(X)^p=d\varphi(X^{[p]})=d\varphi(m^p(X))$ here.  Since $\Bbbk$ is algebraically closed, it follows that the map   
$$\psi^*_{\Bbbk}: \Bbbk[y_1,\ldots,y_{n^2-k}] \rightarrow \Bbbk[x_1,\ldots,x_k],$$
satisfies $\psi^*_{\Bbbk}(y_i)=0$.  Since the reduction mod $p$ of the image of each $\psi^*(y_i)$ is $0$, it must be that $\psi^*(y_i) \in p\Zp[x_1,\ldots,x_k]$.

We now have a morphism defined over $\Zp$.  Since the schemes are all affine, to prove $G_{\Zp}$-equivariance, we must verify the commutativity of the diagram of $\Zp$-algebra homomorphisms
\[
  \begin{tikzcd}
    \Zp[\mathfrak{m}_{\Zp}] \arrow{r}{\sigma^*} \arrow{d} & \Zp[\gZp] \arrow{d}\\
    \Zp[G_{\Zp}] \otimes_{\Zp} \Zp[\mathfrak{m}_{\Zp}] \arrow{r}{id. \otimes \sigma^*} & \Zp[G_{\Zp}] \otimes_{\Zp} \Zp[\gZp].
  \end{tikzcd}
\]
where the two vertical maps encode the action of $G_{\Zp}$ on the two schemes.  Over $\mathbb{C}$, the morphism $$\sigma_{\mathbb{C}}: \mathfrak{g}_{\mathbb{C}} \rightarrow \mathfrak{m}_{\mathbb{C}}$$
is $G_{\mathbb{C}}$-equivariant, so that the diagram above commutes when tensored over $\Zp$ with $\mathbb{C}$.  We have then two $\Zp$-algebra maps from the polynomial ring $\Zp[\mathfrak{m}_{\Zp}]$ to the free $\Zp$-module $\Zp[G_{\Zp}] \otimes_{\Zp} \Zp[\gZp]$ that are the same when tensored over $\Zp$ with $\mathbb{C}$.  It follows that they are the same over $\Zp$, hence that the diagram above commutes.
\end{proof}

\begin{theorem}
The $B_{\mathbb{C}}$-equivariant morphism
$\tilde{E}_p: \mathfrak{u}_{\mathbb{C}} \rightarrow U_{\mathbb{C}}$
is defined over $\Zp$, and over $\Bbbk$ extends uniquely to a generalized exponential map for $G$.
\end{theorem}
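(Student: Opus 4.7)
The plan is to prove the statement in three movements: (i) descend the $B_\mathbb{C}$-equivariant morphism $\tilde{E}_p|_{\mathfrak{u}_\mathbb{C}}$ to a $\Zp$-morphism $\mathfrak{u}_\Zp \to U_\Zp$; (ii) base-change to $\Bbbk$ and extend to a Springer isomorphism $\mathcal{N}(\g) \to \mathcal{U}(G)$; and (iii) verify the generalized exponential property of Definition \ref{embeddingproperty}. The key technical tool at both (i) and (iii) is Proposition \ref{factorsthrough}: since $\mathfrak{u}_\Zp$, $U_\Zp$, and $\mathcal{W}_{m,\Zp}$ all have polynomial (hence free) $\Zp$-module coordinate rings, any morphism defined over a characteristic-zero base that factors through a closed subscheme automatically descends to $\Zp$.

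For the descent to $\Zp$, the candidate morphism I will construct is built from $\Zp$-defined pieces: the closed embedding $d\varphi:\gZp \hookrightarrow \mathfrak{gl}_{n,\Zp}$, the $\Zp$-morphism $\sigma$ of Lemma \ref{Emeansdefined}, the $\Zp$-morphisms $m^{p^i}$ of Proposition \ref{allbasechangeswork}, and the Artin-Hasse series $E_p(t)=\exp(t+t^p/p+t^{p^2}/p^2+\cdots)$, whose coefficients classically lie in $\Zp$. The natural candidate is $X \mapsto \varphi^{-1}(E_p(d\varphi(X)))$. For classical groups in the natural representation (with $p\ne 2$ in types $B$--$D$), $d\varphi(X)^{p^i}$ already lies in $d\varphi(\gZp)$ whenever $X$ is nilpotent, so $E_p(d\varphi(X))$ visibly lies in $\varphi(G_\Zp)$ and matches $\varphi(\tilde{E}_p(X))$ by direct expansion --- this is the content already sketched in Section \ref{Artinforclassical}. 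For exceptional groups in the adjoint representation, the $\mathfrak{m}$-component of $d\varphi(X)^p$ is generally nonzero, so I will iteratively apply the identity $d\varphi(X)^p = d\varphi(m^p(X)) + p\,\sigma(X)$ from Lemma \ref{Emeansdefined} to rewrite each summand $d\varphi(X)^{p^i}/p^i$ inside the argument of $\exp$ as a $\Zp$-integral expression in the $d\varphi(m^{p^j}(X))/p^j$ together with $\sigma$-iterates. The fact that $m^{p^i}\equiv 0$ as soon as $p^i\ge h$, by the weight-grading argument at the beginning of Section \ref{modified}, keeps this sum finite. Matching the rewritten expression term-by-term with (\ref{AHequation}), its base change to $\mathbb{C}$ coincides with $\varphi(\tilde{E}_p)$; Proposition \ref{factorsthrough} then produces the desired $\Zp$-morphism $\mathfrak{u}_\Zp \to \varphi(G_\Zp)$ and, after composing with $\varphi^{-1}$, a $\Zp$-morphism landing in $U_\Zp$.

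For the extension to characteristic $p$, I base-change the $\Zp$-form along $\Zp \to \Bbbk$ to obtain a $B$-equivariant isomorphism $\mathfrak{u} \xrightarrow{\sim} U$, and then invoke the bijection of Lemma \ref{springeroversubgroups} to extend uniquely to a $G$-equivariant isomorphism $\mathcal{N}(\g) \xrightarrow{\sim} \mathcal{U}(G)$, i.e.\ a Springer isomorphism for $G$, which I continue to denote $\tilde{E}_p$. To verify the generalized exponential property, I work in parallel with the map $f_{d\varphi(X)}:\mathcal{W}_{m,\Zp}\to GL_{n,\Zp}$ built from the ordinary Artin-Hasse series, which is a closed embedding of $\Zp$-group schemes by standard theory (Section \ref{Artinforclassical}). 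Over $\mathbb{C}$, the image of $f_{d\varphi(X)}$ lies in $\varphi(G_\mathbb{C})$ by the same $\sigma$-correction calculation used in the first step, now applied factorwise to the product $\prod_i E_p(a_i\,d\varphi(X)^{p^i})$. Proposition \ref{factorsthrough} descends this factorization to $\Zp$, and base change to $\Bbbk$ yields the required closed embedding $\mathcal{W}_m \hookrightarrow G$.

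Uniqueness at each stage is automatic: descent from $\mathbb{C}$ to $\Zp$ is unique by injectivity of the free-module inclusion $\Zp[U_\Zp]\hookrightarrow \mathbb{C}[U_\mathbb{C}]$; the extension from $\mathfrak{u}\to U$ to $\mathcal{N}(\g)\to \mathcal{U}(G)$ is unique by Lemma \ref{springeroversubgroups}; and being a generalized exponential map is a property rather than an additional datum. The main obstacle I anticipate is the bookkeeping of the $\sigma$-corrections for exceptional types in the first step: one must verify that iteratively substituting $d\varphi(X)^p = d\varphi(m^p(X)) + p\,\sigma(X)$ into the Artin-Hasse expression preserves $\Zp$-integrality at every stage, and that the resulting $\Zp$-morphism really base-changes over $\mathbb{C}$ to the object described by (\ref{AHequation}) rather than to some perturbation of it. The vanishing $m^{p^i}\equiv 0$ for $p^i\ge h$ keeps the computation finite but does not eliminate the need to check this compatibility carefully.
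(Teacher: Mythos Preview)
Your three-step architecture (descend to $\Zp$, extend via Lemma \ref{springeroversubgroups}, verify the Witt embedding) matches the paper's, but there is a genuine gap in step (i) for exceptional types and a false claim in step (iii).

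In step (i) you propose to iterate the substitution $d\varphi(X)^p = d\varphi(m^p(X)) + p\,\sigma(X)$ inside the Artin--Hasse exponent so as to produce a $\Zp$-morphism equal over $\mathbb{C}$ to $\varphi\circ\tilde{E}_p$. But chasing this, one finds $\varphi(\tilde{E}_p(X)) = E_p(d\varphi(X))\cdot\exp(-\sigma(X))\cdot(\text{factors of weight}\ge 2p^2)^{-1}$, and neither $\exp(-\sigma(X))$ (which involves $1/k!$) nor the high-weight tail is a priori $\Zp$-integral; replacing $\exp(-\sigma(X))$ by the integral $E_p(-\sigma(X))$ introduces further corrections of the same kind, and nothing in your outline forces termination. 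The paper does not attempt to match $\varphi(\tilde{E}_p(X))$ on the nose in $GL_n$. Instead it defines the manifestly $\Zp$-integral morphism $\psi(X)=\pi_{\mathfrak g}\bigl(E_p(d\varphi(X))\,E_p(-\sigma(X))\bigr)$ and proves $\psi=\pi_{\mathfrak g}\circ\varphi\circ\tilde{E}_p$ over $\mathbb{C}$: the two products agree through the first two exponential factors, and every remaining factor lies in $\theta$-weight $\ge 2p^2$, which $\pi_{\mathfrak g}$ annihilates on $C_{\mathfrak{gl}_n}(d\varphi(X))$ because $p^2>h$ for exceptional types in good characteristic. One then recovers $\tilde{E}_p=\pi^{-1}\circ\psi$ using Lemma \ref{projectionisinvertible}. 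You invoke neither $\pi_{\mathfrak g}$ nor Lemma \ref{projectionisinvertible} nor the inequality $p^2>h$; without these the descent does not close.

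In step (iii) the assertion that the image of $f_{d\varphi(X)}$ lies in $\varphi(G_\mathbb{C})$ is false for exceptional types: already the factor $E_p(a_0\,d\varphi(X))$ contains $\exp(a_0^p\,d\varphi(X)^p/p)$, and $d\varphi(X)^p\notin d\varphi(\gC)$ in the adjoint representation. No ``$\sigma$-correction'' repairs this, since the correction is not a priori group-valued in $\varphi(G)$. The paper instead works intrinsically with $\tilde{E}_p$ itself: since $m^{p^2}=0$ in exceptional types only $\mathcal{W}_2$ is needed, and one checks directly over $\mathbb{C}$, from $\tilde{E}_p(X)=\exp(X)\exp(m^p(X)/p)$, that $(a_0,a_1)\mapsto \tilde{E}_p(a_0X)\,\tilde{E}_p(a_1 m^p(X))$ respects the Witt group law, then descends by Proposition \ref{factorsthrough}.
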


\begin{proof}
We work inside the matrix representation for $G_{\Zp}$ from Table \ref{table:2}.  The previous lemma allows us to define a $G_{\Zp}$-equivariant morphism
$$\psi: d\varphi(\gZp) \rightarrow d\varphi(\gZp),$$
given by
$$\psi(X)=\pi_{\mathfrak{g}}\left(E_p(d\varphi(X))E_p\left(-\frac{\pi_{\mathfrak{m}}(d\varphi(X)^p)}{p}\right)\right).$$
Applying Lemma \ref{isregular}, this morphism restricts over $\mathbb{C}$ to a $G_{\mathbb{C}}$-equivariant automorphism of $d\varphi(\mathcal{N}(\gC))$.

Over $\mathbb{C}$ we have factorizations
$$E_p(d\varphi(X))=\exp(d\varphi(X))\exp\left(\frac{d\varphi(X)^p}{p}\right)\cdots,$$
and 
$$E_p\left(-\frac{\pi_{\mathfrak{m}}(d\varphi(X)^p)}{p}\right)=\exp\left(-\frac{\pi_{\mathfrak{m}}(d\varphi(X)^p)}{p}\right)\exp\left(\frac{1}{p}\left(-\frac{\pi_{\mathfrak{m}}(d\varphi(X)^p)}{p}\right)^p\right)\cdots.$$
We can also consider $\varphi(\tilde{E}_p(X))$ over $\mathbb{C}$.  We know that the exponential map over $\mathbb{C}$ preserves algebraic subgroups \cite[Proposition 7.1]{M}, so that for any $X \in \mathcal{N}(\gC)$, $\varphi(\exp(X))=\exp(d\varphi(X))$, where the first exponential is on $\mathcal{N}(\gC)$, and the second exponential is on $\mathcal{N}(\mathfrak{gl}_{n,\mathbb{C}})$.  Recalling also the definition of $m^i$, we have a factorization
\begin{align*}
\varphi(\tilde{E}_p(X))=& \varphi\left(\exp\left(X + \frac{m^p(X)}{p} + \cdots \right)\right)\\
=& \exp(d\varphi(X))\exp\left(\frac{d\varphi(m^p(X))}{p}\right)\cdots \\
=& \exp(d\varphi(X))\exp\left(\frac{\pi_{\mathfrak{g}}(d\varphi(X)^p)}{p}\right)\cdots .
\end{align*}
We claim that $\pi_{\mathfrak{g}} \circ \varphi \circ \tilde{E}_p = \psi$ as $B_{\mathbb{C}}$-equivariant automorphisms of $d\varphi(\mathfrak{u}_{\mathbb{C}})$.  Choose $\theta$ to be an associated cocharacter of $X$, so that $\varphi \circ \theta$ is a cocharacter of $GL_{n,\mathbb{C}}$.  Both $\pi_{\mathfrak{g}} \circ \varphi \circ \tilde{E}_p(X)$ and $\psi(X)$ consist of a sum of $\varphi(\theta(\mathbb{G}_{m,\mathbb{C}}))$ weight vectors in $d\varphi(C_{\gC}(X))$.  By Lemma \ref{weightsofcenter} it follows that $\pi_{\mathfrak{g}}$ is zero on an element $Y \in \mathfrak{gl}_{n,\mathbb{C}}$ if
$$Y \in C_{\mathfrak{gl}_{n,\mathbb{C}}}(d\varphi(X)), \quad Y \in \mathfrak{gl}_{n,\mathbb{C}}(\varphi \circ \theta; z),$$
and $z$ is not an exponent of the Weyl group of $G$.  In particular, from Table \ref{table:1} we see that $z$ is not an exponent of the Weyl group whenever $z \ge h$.

The factorizations above show that
\begin{equation}\label{inlargeweightspaces}
\varphi(\tilde{E}_p(X))-\psi(X) \in \sum_{j \ge p^2} \mathfrak{gl}_{n,\mathbb{C}}(\varphi \circ \theta; j).
\end{equation}
Because $p$ is a good prime for the root system of $G$, and that root system is of exceptional type, we have $p^2>h$.  Thus, (\ref{inlargeweightspaces}) implies that $\varphi(\tilde{E}_p(X))$ and $\psi(X)$ have the same projection to $d\varphi(\gC)$ as the terms in their difference are all mapped to $0$.  By identifying $U_{\mathbb{C}}$ and $\mathfrak{u}_{\mathbb{C}}$ with their embedded images in $GL_{n,\mathbb{C}}$ and $\mathfrak{gl}_{n,\mathbb{C}}$ respectively, and with $\pi^{-1}$ being the inverse of the isomorphism in Lemma \ref{projectionisinvertible}, we conclude that
$$\tilde{E}_p = \pi^{-1} \circ \psi.$$
As the morphisms on the right are both defined over $\Zp$, it stands that $\tilde{E}_p$ is also.

Finally, we show that this map defines an embedding of truncated Witt groups into $U_{\Zp}$.  Let $X$ continue to be regular nilpotent.  Assume that $p < h$ but is still a good prime (the case when $p \ge h$ is immediate).  There is a morphism of schemes over $\Zp$,
$$f:\mathcal{W}_{2,\Zp} \rightarrow U_{\Zp},$$
given by
$$f(a_1,a_0)=\tilde{E}_p(a_1X)\tilde{E}_p(a_0m^p(X)).$$
It is an algebraic group homomorphism, by observing that over $\mathbb{C}$,
\begin{align*}
f(b_1,b_0)f(a_1,a_0)=&\tilde{E}_p(b_1X)\tilde{E}_p(b_0m^p(X))\tilde{E}_p(a_1X)\tilde{E}_p(a_0m^p(X))\\
=&\exp((b_1+a_1)X)\exp\left((b_1^p + a_1^p)\frac{m^p(X)}{p}\right)\exp\left((b_0+a_0)m^p(X)\right)\\
=&\exp((b_1+a_1)X)\exp\left(\left(b_0+a_0+\frac{(b_1^p + a_1^p)}{p}\right)m^p(X)\right).\\
\end{align*}
On the other hand,
\begin{align*}
f((b_1,b_0)+(a_1,a_0))=&f\left(b_1+a_1,b_0+a_0 + \frac{b_1^p+a_1^p-(b_1 + a_1)^p}{p}\right)\\
=&\exp((b_1+a_1)X)\exp\left((b_1+a_1)^p\frac{m^p(X)}{p}\right)\\
& \times \exp\left(b_0+a_0 + \frac{b_1^p+a_1^p-(b_1 + a_1)^p}{p}m^p(X)\right)\\
=&\exp((b_1+a_1)X)\exp\left(\left(b_0+a_0+\frac{(b_1^p + a_1^p)}{p}\right)m^p(X)\right).\\
\end{align*}
We see then that this map respects the group structure of $\mathcal{W}_{2,\mathbb{C}}$.  By looking at the comorphisms, arguments similar to those used earlier and the fact that $\Zp[\mathcal{W}_{2,\Zp}]$ is free over $\Zp$ shows that this is a group homormophism over $\Zp$.  Working over $\Bbbk$, the fact that $\tilde{E}_p$ is a Springer isomorphism can be used to show that this morphism sends $\mathcal{W}_{2}$ isomorphically to its image.

\end{proof}

\section{Decomposing The Connected Center of $C_G(X)$}

Let $X \in \mathcal{N}(\mathfrak{g})$.  We will show that any generalized exponential map can be used to obtain a decomposition of $Z(C_G(X))^0$ into a direct product of truncated Witt groups.  This product holds as algebraic varieties, strengthening the result of Seitz in \cite[Theorem 1]{Sei2}, where such a decomposition is observed to hold as abstract groups.  We further prove that when $G$ is not of type $D_{p^m+1}$ for any $m>0$, then there is canonical truncated Witt group embedded in $G$ that contains $X$ in its Lie algebra.  This fact will feature prominently in the next section when we attempt to classify all generalized exponential maps for $G$.

\subsection{Decomposing $Z(C_G(X))^0$}
Let $\phi$ be a generalized exponential map.  Let $X \in \mathcal{N}(\g)$, and $\theta$ an associated cocharacter of $X$.  It is known that $Z(C_G(X)^0)$ is unipotent; this result is attributed in \cite{LT} to Proud, who proved it in an unpubished manuscript \cite{Pr2}.  Since this group is also connected and abelian, then by general theory it is at least isogenous to a product of truncated Witt groups (cf. \cite[VII.2, Theorem 1]{Ser2}).  We will show that $Z(C_G(X)^0)$ is isomorphic to such a product.

The image of $\theta$ normalizes $Z(C_G(X)^0)$, thus we can find a basis $\{e_1,e_2,\ldots,e_{\ell}\}$ of vectors in $\text{Lie}(Z(C_G(X)))$ such that $e_i \in \g(\theta,j_i)$ for some $j_i$.  In this case, it happens that each $j_i > 0$ (c.f. \cite[Proposition 5.10(b)]{J2}).  Now, refine this set so that it is no longer necessarily a basis, but that it, together with all non-zero $[p^k]$-th powers of the elements therein for all $k > 0$, form a basis for $\text{Lie}(Z(C_G(X)))$.  Let $\mathcal{W}_{(\phi,e_i)}$ be the closed subgroup of $G$ that is the image of the morphism $\phi_{e_i}$ from Definition \ref{embeddingproperty}.

\begin{remark}\label{sameliesubalgebra}
It follows by the construction of the morphism $\phi_{e_i}$ that $\text{Lie}(\mathcal{W}_{(\phi,e_i)})$ is precisely the $[p]$-restricted Lie subalgebra of $\g$ that is generated by $e_i$.
\end{remark}

\begin{prop}\label{centralizerdecomp}
Let $X \in \Ng$, and let the subgroups $\mathcal{W}_{(\phi,e_i)}$ be defined as above.  Each $\mathcal{W}_{(\phi,e_i)} \subseteq Z(C_G(X))^0$, and there is an isomorphism of algebraic groups
$$\mathcal{W}_{(\phi,e_1)} \times \mathcal{W}_{(\phi,e_2)} \times \cdots \times \mathcal{W}_{(\phi,e_{\ell})} \cong Z(C_G(X))^0.$$
\end{prop}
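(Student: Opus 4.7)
The proof has two components: first showing $\mathcal{W}_{(\phi,e_i)} \subseteq Z(C_G(X))^0$, then verifying that the multiplication morphism
\[
\mu: \mathcal{W}_{(\phi,e_1)} \times \cdots \times \mathcal{W}_{(\phi,e_\ell)} \to Z(C_G(X))^0,\quad (w_1,\ldots,w_\ell)\mapsto w_1\cdots w_\ell,
\]
is an isomorphism of algebraic groups.

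For the containment, the key observation is that $Z(C_G(X))^0$ is central in $C_G(X)$, so the adjoint action of $C_G(X)$ on $\text{Lie}(Z(C_G(X))^0)$ is trivial. In particular, each scalar $a_j e_i^{[p^j]}$ is fixed by $C_G(X)$, whence by the $G$-equivariance of $\phi$ so is $\phi(a_j e_i^{[p^j]})$, placing this element in $Z(C_G(X))$. Taking products then shows that $\phi_{e_i}$ takes values in $Z(C_G(X))$, and connectedness of $\mathcal{W}_{m_i}$ promotes this to $Z(C_G(X))^0$.

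For the isomorphism, $\mu$ is a homomorphism of algebraic groups because the target is abelian. Its differential at the identity is the summation map $\bigoplus_i \text{Lie}(\mathcal{W}_{(\phi,e_i)}) \to \text{Lie}(Z(C_G(X))^0)$. By Remark \ref{sameliesubalgebra}, each $\text{Lie}(\mathcal{W}_{(\phi,e_i)})$ is the $\Bbbk$-span of $\{e_i, e_i^{[p]}, \ldots, e_i^{[p^{m_i-1}]}\}$, and by the construction of the $e_i$ these iterated $[p]$-powers jointly form a basis of $\text{Lie}(Z(C_G(X))^0)$. Hence $d\mu$ is an isomorphism, so $\mu$ is \'etale; its image, being a closed connected subgroup of $Z(C_G(X))^0$ whose Lie algebra contains every $\text{Lie}(\mathcal{W}_{(\phi,e_i)})$, must coincide with $Z(C_G(X))^0$.

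The main obstacle is verifying that $\mu$ is injective: since $d\mu$ is an isomorphism, $\ker\mu$ is a finite \'etale closed subgroup scheme, but \'etale subgroups of connected unipotent groups need not be trivial in general (witness the Artin--Schreier isogeny $\mathbb{G}_a\to\mathbb{G}_a$ with kernel $\mathbb{F}_p$). To rule out a nontrivial kernel I would appeal to the abstract direct-product decomposition of Seitz \cite[Theorem 1]{Sei2}, which decomposes $Z(C_G(X))^0$ abstractly into a product of truncated Witt groups of the shapes $m_i$; matching this decomposition against $\mu$ on $\Bbbk$-points shows $\mu$ is bijective on closed points, and an \'etale homomorphism of smooth algebraic groups that is bijective on $\Bbbk$-points is an isomorphism. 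A more intrinsic alternative would use the Dieudonn\'e correspondence: the refined basis $\{e_i^{[p^k]}\}$ presents $\text{Lie}(Z(C_G(X))^0)$ as a direct sum of cyclic modules over the twisted polynomial ring $\Bbbk\{F\}$, and lifting this decomposition to the Dieudonn\'e module of $Z(C_G(X))^0$ via the naturality of the generalized exponential realizes $\mu$ as the corresponding group-scheme isomorphism.
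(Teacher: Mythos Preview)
Your setup and the argument that $d\mu$ is an isomorphism match the paper's. The containment $\mathcal{W}_{(\phi,e_i)}\subseteq Z(C_G(X))^0$ is argued differently: the paper simply invokes \cite[Theorem 4.2]{MT}, which shows that any Springer isomorphism carries $\text{Lie}(Z(C_G(X)))$ onto $Z(C_G(X))^0$. Your argument via $G$-equivariance is reasonable but, as written, only lands $\phi(a_je_i^{[p^j]})$ in $C_G(C_G(X))$; you would still need $C_G(C_G(X))\subseteq C_G(X)$, which requires knowing $X\in\text{Lie}(C_G(X))$.

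The real gap is injectivity. Your own Artin--Schreier example already defeats the Seitz appeal: there the source and target are abstractly isomorphic (both $\mathbb{G}_a$), yet the map has a nontrivial \'etale kernel. Knowing that $Z(C_G(X))^0$ is abstractly a product of Witt groups of the same shapes does not by itself force $\ker\mu=1$; there is no cardinality or structural obstruction to a finite \'etale kernel in that statement. The Dieudonn\'e sketch is too vague to fill this. More to the point, the proposition is advertised as \emph{strengthening} Seitz's abstract decomposition, so leaning on it is undesirable even if it could be made to work.

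The paper's device is the one you are missing: the associated cocharacter $\theta$. Each $\mathcal{W}_{(\phi,e_i)}$ is $\theta(\mathbb{G}_m)$-stable, so $\mu$ is $\theta(\mathbb{G}_m)$-equivariant and $\ker\mu$ is $\theta(\mathbb{G}_m)$-stable. Being finite, $\ker\mu$ consists of $\theta(\mathbb{G}_m)$-fixed points. But every $\theta$-weight on $\text{Lie}(Z(C_G(X)))$ is strictly positive \cite[Proposition 5.10(b)]{J2}, so the only $\theta(\mathbb{G}_m)$-fixed point in each $\mathcal{W}_{(\phi,e_i)}$ is the identity, and $\ker\mu$ is trivial. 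This is elementary and avoids any appeal to Seitz or Dieudonn\'e theory.
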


\begin{proof}
The argument near the end of the proof of \cite[Theorem 4.2]{MT}, applied to the Springer isomorphism $\phi$, shows that $\phi$ maps $\text{Lie}(Z(C_G(X)))$ isomorphically onto $Z(C_G(X))^0$.   Since $e_i \in \text{Lie}(Z(C_G(X)))$, it then follows that $\mathcal{W}_{(\phi,e_i)} \subseteq Z(C_G(X))^0$.  Inclusion, followed by repeated multiplication, defines a homomorphism of algebraic groups
$$f: \mathcal{W}_{(\phi,e_1)} \times \mathcal{W}_{(\phi,e_2)} \times \cdots \times \mathcal{W}_{(\phi,e_{\ell})} \rightarrow Z(C_G(X))^0.$$
The differential $df$ must be an isomorphism of Lie algebras, as it is surjective (the image contains each ${e_i}^{[p^m]}$), and the respective Lie algebras have the same dimension.  Therefore the kernel of $f$ has dimension $0$.  On the other hand, $\theta(\mathbb{G}_m)$ acts on each $\mathcal{W}_{(\phi,e_i)}$ by conjugation, and therefore can be given a component-wise action on the product of these groups.  Further, the homomorphism $f$ is equivariant with respect to this action, so that the kernel of $f$ is stable under the action of $\theta(\mathbb{G}_m)$.  As it is $0$-dimensional, it must in fact consist of fixed points for $\theta(\mathbb{G}_m)$.  But each $e_i$ is a non-zero weight vector for $\theta(\mathbb{G}_m)$ (as noted above), so there are no non-zero $\theta(\mathbb{G}_m)$-fixed points in $\text{Lie}(Z(C_G(X)))$.  Consequently, the only $\theta(\mathbb{G}_m)$-fixed point in any $\mathcal{W}_{(\phi,e_i)}$ is the identity element, and by extension the same is true for the action on the product of these groups.  This shows that $f$ is an injective map on $\Bbbk$-points.  But the image of $f$ is closed and connected (it is equal to the subgroup generated by all of the $\mathcal{W}_{(\phi,e_i)}$ which are each closed and connected), and since $Z(C_G(X))^0$ is connected of the same dimension, $f$ is also surjective on $\Bbbk$-points.  It is therefore a bijective morphism on $\Bbbk$-points having bijective differential, hence an isomorphism of algebraic groups.
\end{proof}

\subsection{Truncated Witt Groups Over Nilpotent Elements}

\begin{theorem}\label{canonicalnilpotentovergroup}
Suppose that $\phi$ and $\psi$ are two generalized exponential maps for $G$, and that the root system of $G$ is not of type $D_{p^n+1}$ for any $n>0$.  Then $\mathcal{W}_{(\phi,X)}=\mathcal{W}_{(\psi,X)}$ for every $X \in \Ng$.
\end{theorem}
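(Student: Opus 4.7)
The plan is to exhibit both $\mathcal{W}_{(\phi,X)}$ and $\mathcal{W}_{(\psi,X)}$ as $\theta(\mathbb{G}_m)$-stable smooth closed connected subgroups of $A := Z(C_G(X))^{\circ}$ sharing the same Lie algebra, and then to show that a weight-space rigidity on $\mathrm{Lie}(A)$ forces them to coincide. By Remark \ref{sameliesubalgebra} both subgroups have Lie algebra $\mathfrak{b} := \langle X,X^{[p]},\ldots,X^{[p^{m-1}]}\rangle_{\Bbbk}$; since $X \in Z(C_{\mathfrak{g}}(X)) = \mathrm{Lie}(Z(C_G(X)))$ in good characteristic, $\mathfrak{b}\subseteq\mathrm{Lie}(A)$, and so both subgroups sit inside $A$. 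The $G$-equivariance of $\phi$ and $\psi$ turns $\theta(t)$-conjugation of $\phi(a_i X^{[p^i]})$ into $\phi(t^{2p^i}a_i X^{[p^i]})$ (and similarly for $\psi$), so both subgroups are $\theta(\mathbb{G}_m)$-stable, and the $\theta$-weights on $\mathfrak{b}$ are $2, 2p, \ldots, 2p^{m-1}$.

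The key technical step is the following lemma: under the stated hypothesis on the root system, the weight space $\mathrm{Lie}(A)_{2p^i}$ is one-dimensional, spanned by $X^{[p^i]}$, for each $0\le i\le m-1$, and $\mathrm{Lie}(A)_{2p^j}=0$ for all $j\ge m$. For $X$ regular this is immediate from Proposition \ref{weightsofcenter} together with Table \ref{table:1}: the exponents of the Weyl group are multiplicity-free outside type $D_\ell$ with $\ell$ even, and the duplicated exponent $\ell-1$ in that type equals a power of $p$ precisely in the excluded types $D_{p^k+1}$; the vanishing for $j\ge m$ follows because the nilpotence order $p^m$ of a regular element satisfies $p^m\ge h$, exceeding twice every exponent. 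For a general $X\in \Ng$ I would pass to a Bala--Carter Levi $L$ in which $X$ is distinguished, and reduce the weight analysis in $Z(C_G(X))$ to the analogous statement in $L$, using that any type-$D$ Levi of $G$ inherits the global multiplicity-freeness from the hypothesis on $G$.

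With the key lemma in hand I conclude as follows. Consider the $\theta(\mathbb{G}_m)$-equivariant homomorphism
\[
\bar{\Psi}\colon\mathcal{W}_{(\psi,X)}\hookrightarrow A\twoheadrightarrow A/\mathcal{W}_{(\phi,X)}.
\]
Its differential is the zero map $\mathfrak{b}\to \mathrm{Lie}(A)/\mathfrak{b}$, so $\bar{\Psi}$ factors through the relative Frobenius as $\bar{\Psi}=g_1\circ F$ for some $\theta$-equivariant $g_1\colon \mathcal{W}_{(\psi,X)}^{(p)}\to A/\mathcal{W}_{(\phi,X)}$. Now $\mathcal{W}_{(\psi,X)}^{(p)}$ carries $\theta$-weights $\{2p, 2p^2,\ldots,2p^m\}$, while by the key lemma $\mathrm{Lie}(A/\mathcal{W}_{(\phi,X)})$ contains no weight of the form $2p^i$ at all; hence $dg_1=0$ and $g_1$ itself factors through Frobenius. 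Iterating, $\bar{\Psi}$ factors through $F^j$ for every $j\ge 0$, so its comorphism has image in $\bigcap_{j\ge 0}\Bbbk[\mathcal{W}_{(\psi,X)}]^{p^j}=\Bbbk$, forcing $\bar{\Psi}$ to be trivial. This yields $\mathcal{W}_{(\psi,X)}\subseteq \mathcal{W}_{(\phi,X)}$, and since both are smooth connected of dimension $m$ they coincide. The principal obstacle is the key lemma for non-regular $X$: one must verify that the Bala--Carter reduction preserves the multiplicity-freeness of the weight-$2p^i$ spaces, and that the excluded type $D_{p^n+1}$ really is the only global obstruction.
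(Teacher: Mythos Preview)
Your argument for regular $X$ is essentially the paper's: both use that $\mathrm{Lie}(\mathcal{W}_{(\phi,X)})=\mathrm{Lie}(\mathcal{W}_{(\psi,X)})=\mathfrak b$, both reduce to showing that the composite $\mathcal{W}_{(\psi,X)}\hookrightarrow C_G(X)^0\to C_G(X)^0/\mathcal{W}_{(\phi,X)}$ is trivial, and both use that the quotient Lie algebra has no $\theta$-weight $2p^i$. Your Frobenius-iteration is a pleasant variant of the paper's subquotient argument; either works.

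The real divergence is the passage to non-regular $X$, and here your Bala--Carter reduction has a genuine gap. Your key lemma asks that $\mathrm{Lie}(Z(C_G(X)))_{2p^i}$ be one-dimensional for \emph{every} nilpotent $X$, and you propose to reduce this to the distinguished case in a Levi $L$. But the hypothesis ``$G$ is not of type $D_{p^n+1}$'' does \emph{not} pass to Levi subgroups: for example $G=E_7$ at $p=5$ has a standard Levi of type $D_6=D_{5+1}$, and $G=D_{12}$ at $p=3$ has a Levi of type $D_4=D_{3+1}$. For a regular nilpotent of such a Levi the relevant weight space in $C_{\mathfrak l}(X)$ genuinely has dimension $2$, so the inheritance claim you flag as ``the principal obstacle'' is false as stated. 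At best you would need a separate argument that the extra weight-$2p^i$ vector in $\mathfrak l$ does not survive in $\mathrm{Lie}(Z(C_G(X)))$, which is far from obvious and would require a case analysis well beyond what the theorem should need.

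The paper sidesteps all of this. Once the regular case is settled, it extracts from $\mathcal{W}_{(\phi,X)}=\mathcal{W}_{(\psi,X)}$ a concrete identity
\[
\phi(X)=\psi(c_0X)\,\psi(c_1X^{[p]})\cdots\psi(c_{m-1}X^{[p^{m-1}]})
\]
for specific scalars $c_i\in\Bbbk$. The right-hand side, viewed as a map $Y\mapsto \psi(c_0Y)\psi(c_1Y^{[p]})\cdots$, is $G$-equivariant (scalar multiplication, the $[p]$-map, and $\psi$ all are), so by density of the regular orbit the identity holds for every $Y\in\Ng$. This immediately gives $\mathcal{W}_{(\psi,Y)}\subseteq\mathcal{W}_{(\phi,Y)}$, hence equality, with no further weight analysis. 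You should replace your Bala--Carter step with this equivariance-and-density argument.
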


\begin{proof}
Suppose first that $X$ is a regular nilpotent element, with associated cocharacter $\theta$.  In this case, $Z(C_G(X))^0=C_G(X)^0$, and $\text{Lie}(Z(C_G(X))^0)=C_{\g}(X)$.  Choose a set of generating elements of $C_{\g}(X)$ with respect to $\theta$ as in the setup prior to Proposition \ref{centralizerdecomp}, and order $\{e_1,\ldots,e_{\ell}\}$ so that $X=e_1$.  By the proposition, we have
$$C_G(X)^0 \cong \mathcal{W}_{(\phi,X)} \times \mathcal{W}_{(\phi,e_2)} \times \cdots \times \mathcal{W}_{(\phi,e_{\ell})},$$
and also that
$$C_G(X)^0 \cong \mathcal{W}_{(\psi,X)} \times \mathcal{W}_{(\psi,e_2)} \times \cdots \times \mathcal{W}_{(\psi,e_{\ell})}.$$
Also, as in Remark \ref{sameliesubalgebra},
$$\text{Lie}(\mathcal{W}_{(\psi,X)}) = \text{Span}\{X,X^{[p]},\ldots,X^{[p^{m-1}]}\} = \text{Lie}(\mathcal{W}_{(\phi,X)}).$$
Let $f$ be the composite of morphisms
$$\mathcal{W}_{(\psi,X)} \hookrightarrow C_G(X)^0 \rightarrow C_G(X)^0/\mathcal{W}_{(\phi,X)}.$$
Because of the direct product decomposition of $C_G(X)^0$ given above, it follows that
$$\text{Lie}(C_G(X)^0/\mathcal{W}_{(\phi,X)}) \cong C_{\mathfrak{g}}(X)/\text{Lie}(\mathcal{W}_{(\phi,X)}).$$
The equality of the Lie algebras of $\mathcal{W}_{(\phi,X)}$ and $\mathcal{W}_{(\psi,X)}$ show that $df=0$.  Additionally, by Lemma \ref{weightsofcenter}, and the table of exponents of the Weyl group, there are no remaing $\theta(\mathbb{G}_m)$-weight vectors in $C_{\mathfrak{g}}(X)/\text{Lie}(\mathcal{W}_{(\phi,X)})$ having weight $2p^i$ for any $i\ge 0$ (since we are assuming that we are not in type $D_{p^n+1}$, all vectors having such a weight in $C_{\g}(X)$ are of the form $X^{[p^i]}$).

We claim that the last fact will imply that $f$ is trivial on $\Bbbk$-points.  To see this, let $p^c$ be the maximal order of an element in $f(\mathcal{W}_{(\psi,X)})$.  Consider then the morphism
$$f: \mathcal{W}_{(\psi,X)} \rightarrow H,$$
where $H$ is the subquotient group of elements of order $p^c$ in $C_G(X)^0/\mathcal{W}_{(\phi,X)})$ modulo the subgroup of elements of order less than $p^c$.  Every non-zero element in $H$ has order $p$, so that $H$ is a vector group, and we can view $f$ as a morphism
$$f: \mathcal{W}_{(\psi,X)} \rightarrow \mathbb{G}_a \times \cdots \times \mathbb{G}_a,$$
and this map is still $\theta(\mathbb{G}_m)$-equivariant.  This morphism must be trivial on the subgroup ${\mathcal{W}_{(\psi,X)}}^p$ of all $p$-th powers in $\mathcal{W}_{(\psi,X)}$, therefore the morphism factors through the quotient
$$\mathcal{W}_{(\psi,X)}/{\mathcal{W}_{(\psi,X)}}^p \cong \mathbb{G}_a.$$  The action of $\theta(\mathbb{G}_m)$ on this quotient is by weight $2$ (since $X \in \g(\theta;2)$).  If this morphism above is non-trivial, then the $\theta(\mathbb{G}_m)$-equivariance forces one of the factors on the right to have a $\theta(\mathbb{G}_m)$-action by the cocharacter $2p^j$ for some $j$.  But if this held, then the Lie algebra of the group on the right would contain a $\theta(\mathbb{G}_m)$-weight vector of weight $2p^j$.  This cannot happen since this would result in a weight vector of the same weight in $C_{\mathfrak{g}}(X)/\text{Lie}(\mathcal{W}_{(\phi,X)})$.  Therefore $f$ must be the trivial homomorphism on $\Bbbk$-points, so that $\psi_X(\mathcal{W}_m) \subseteq \phi_X(\mathcal{W}_m)$.  By connectedness and equality of dimension, this containment is then an equality.

We next show that the same result holds for an arbitrary nilpotent element $Y$.  Since $\mathcal{W}_{(\psi,X)} = \mathcal{W}_{(\phi,X)}$, then by the construction of these closed subgroups, there are elements $c_0,\ldots,c_{m-1}$ in $\Bbbk$, $c_0 \ne 0$, such that
\begin{equation}\label{thisisthat}\phi(X)=\psi(c_0X)\psi(c_1X^{[p]})\cdots \psi(c_{m-1}X^{[p^{m-1}]}).\end{equation}
If we can show that this relation holds for all nilpotent elements, then that will prove the claim.  To do so, it suffices to show that the morphism
$$\mathcal{N}(\g) \rightarrow \mathcal{U}(G), \qquad Y \mapsto \psi(c_0Y)\psi(c_1Y^{[p]})\cdots \psi(c_{m-1}Y^{[p^{m-1}]}),$$
is $G$-equivariant.  But this is clear as the $[p]$-th power map, scalar multiplication, and $\psi$ are all $G$-equivariant.  This morphism is therefore equal to $\phi$ on the regular orbit, hence on all of $\mathcal{N}(\g)$ by density.
\end{proof}

\section{Parameterizing Generalized Exponential Maps}\label{uniqueness}

In this section we will parameterize all generalized exponential maps for $G$.  We begin with an example.

\begin{example}
Let $\Bbbk$ have characteristic $2$.  The morphisms $\phi(X)=1+X+X^2$ and $\psi(X)=1+X+X^3$ each define generalized exponential maps for $GL_4$.  In fact, $\phi(X)=E_2(X)$ (reducing the coefficients in $E_2(t)$ mod $2$).  The two morphisms are related by the equation
$$\psi(X)=\phi(X)\phi(X^2).$$
In the notation of restricted Lie algebras, this can be expressed as
$$\psi(X)=\phi(X)\phi(X^{[2]}).$$
For all $a,b \in \Bbbk$, we have
$$\psi(aX)\psi(bX^{[2]})=\phi(aX)\phi(a^2X^2)\phi(bX^2),$$
from which it can be worked out that $\mathcal{W}_{(\phi,X)}=\mathcal{W}_{(\psi,X)}$.
\end{example}

Using the results in the previous section, we can now generalize the relationship between $\phi$ and $\psi$ in general.

\begin{theorem}
Let $p^m$ be the nilpotence degree of a regular nilpotent element in $\g$.  Then set of all generalized exponential maps for $G$ is in bijection with the set:
$$\begin{array}{ll}
\bullet \; \Bbbk^{\times} & \textup{if } m=1.\\
\bullet \; \mathbb{F}_p^{\times} \times {\mathbb{F}_p}^{\times m-2} \times \Bbbk & \textup{if } m > 1 \textup{ and } G \textup{ not of type } D_{p^n+1} \textup{ for any } n>0.\\
\bullet \; \mathbb{F}_p^{\times} \times {\mathbb{F}_p}^{\times m-2} \times \Bbbk^{\times 2} \quad & \textup{if } m > 1 \textup{ and } G \textup{ is of type } D_{p^n+1} \textup{ for some } n>0.\\
\end{array}$$
\end{theorem}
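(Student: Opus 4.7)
The plan is to fix a reference generalized exponential $\phi_0$, furnished by Theorem~B, together with a regular nilpotent $X \in \g$ with associated cocharacter $\theta$ and nilpotence order $p^m$. Since any generalized exponential $\phi$ is determined by $\phi(X) \in C_G(X)^0 = Z(C_G(X))^0$, I would first use Proposition~\ref{centralizerdecomp} to decompose $C_G(X)^0$ as a product of truncated Witt subgroups $\mathcal{W}_{(\phi_0, e_i)}$ indexed by a weight basis of $\text{Lie}(Z(C_G(X)))$. Re-running the weight-by-weight analysis in the proof of Theorem~\ref{canonicalnilpotentovergroup}, I would show that the coordinate of $\phi(X)$ along $e_i$ must vanish whenever the weight $2k_i$ of $e_i$ is not of the form $2p^j$; the surviving slots correspond exactly to $\{X, X^{[p]}, \ldots, X^{[p^{m-1}]}\}$, except in type $D_{p^n+1}$, where the repeated exponent $p^n$ yields an extra weight-$2p^n$ line $\Bbbk X'$ complementary to $\Bbbk X^{[p^n]}$ in $C_{\g}(X)$, satisfying $(X')^{[p]} = 0$ because $2p^{n+1}$ exceeds every weight of $C_{\g}(X)$ (the exponents being bounded by $2p^n - 1$).

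I would then write $\phi(X) = \phi_0(c_0 X)\phi_0(c_1 X^{[p]}) \cdots \phi_0(c_{m-1} X^{[p^{m-1}]})$, with an extra factor $\phi_0(c' X')$ in type $D_{p^n+1}$, noting $c_0 \in \Bbbk^{\times}$ by Lemma~\ref{isregular}. Using the $G$-equivariance and density argument from Theorem~\ref{canonicalnilpotentovergroup}, this formula extends to $\phi(Y) = \phi_0(c_0 Y) \cdots \phi_0(c_{m-1} Y^{[p^{m-1}]})$ (plus $\phi_0(c' Y')$) for every nilpotent $Y$. Next I would impose the rigidity relation $\phi(X^{[p]}) = \phi(X)^p$ and unpack both sides through the group homomorphism $\phi_{0, X}: \mathcal{W}_m \to G$. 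Since multiplication by $p$ in $\mathcal{W}_m$ over $\Bbbk$ is $(a_0, a_1, \ldots) \mapsto (0, a_0^p, a_1^p, \ldots)$, the right-hand side reads $\phi_{0, X}((0, c_0^p, \ldots, c_{m-2}^p))$, while the left-hand side is $\phi_{0, X}((0, c_0, c_1, \ldots, c_{m-2}))$. Equating forces $c_i = c_i^p$ for $0 \le i \le m-2$, so $c_0 \in \mathbb{F}_p^{\times}$ and $c_1, \ldots, c_{m-2} \in \mathbb{F}_p$; the top coefficient $c_{m-1}$ is unconstrained in $\Bbbk$ because $X^{[p^m]} = 0$, and in type $D_{p^n+1}$ the extra $c' \in \Bbbk$ is likewise unconstrained since $(X')^{[p]} = 0$ makes $\phi_0(c' X')^p = 1$. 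The case $m = 1$ is handled separately: the $p$-th power relation is automatic, so $c_0$ ranges over all of $\Bbbk^{\times}$.

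Finally, I would verify that every admissible tuple actually yields a generalized exponential. The candidate $\phi$ is $G$-equivariant by construction, a Springer isomorphism by Lemma~\ref{isregular}, and satisfies $\phi(Y^{[p]}) = \phi(Y)^p$ globally by the same calculation. For the closed-embedding property of $\phi_X: \mathcal{W}_m \to G$, I would factor $\phi_X$ through $\phi_{0, X}$ as $\phi_{0, X} \circ T$, where $T(a_0, \ldots, a_{m-1}) = \sum_{i} V^i(\underline{a_i} \cdot C)$ with $C = (c_0, \ldots, c_{m-1}) \in \mathcal{W}_m(\Bbbk)$, and invoke standard Witt-vector identities ($FV = VF = p$ and $V(x) \cdot y = V(x \cdot F(y))$) to conclude $T$ is a group endomorphism, injective once $c_0 \in \mathbb{F}_p^{\times}$. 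The parameterization is bijective because $\phi$ is determined by $\phi(X)$ and distinct tuples produce distinct $\phi(X)$ by uniqueness of the product decomposition of $C_G(X)^0$. I expect the hard part to be this last verification step, particularly showing that $T$ is genuinely a group homomorphism rather than just a morphism of schemes, and establishing $G$-equivariance of the extra $X'$-coordinate extension in type $D_{p^n+1}$, where $X'$ is only canonical up to the choice of weight basis and the analysis at the repeated exponent must be handled with care.
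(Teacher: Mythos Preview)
Your plan matches the paper's proof almost step for step: fix a reference $\phi_0$, use Theorem~\ref{canonicalnilpotentovergroup} (which the paper cites directly rather than re-running) to force $\psi(X)\in\mathcal{W}_{(\phi_0,X)}$, extract the constraints $c_i\in\mathbb{F}_p$ for $i\le m-2$ from $\psi(X^{[p]})=\psi(X)^p$, and then verify the converse. Your two flagged difficulties are exactly where the paper supplies the missing ingredients: the $G$-equivariant extension of your $X'$ in type $D_{p^{n}+1}$ is precisely the morphism $\gamma_r$ of Lemma~\ref{likemultilinear} (together with the weight computation $\gamma_r(X^{[p^i]})=0$ for $i>0$), and for the Witt-vector endomorphism $T$ note that your identities alone give $T(a)=a\cdot C$ only when $C\in\mathcal{W}_m(\mathbb{F}_p)$, so you should split $C=(c_0,\dots,c_{m-2},0)+V^{m-1}(\underline{c_{m-1}})$ and observe that the second piece contributes the separately additive term $a\mapsto V^{m-1}(\underline{c_{m-1}a_0^{p^{m-1}}})$.
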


\begin{proof}
The first case occurs only when $p \ge h$, and thus is effectively saying that the exponential map is the unique generalized exponential map, up to precomposing with a non-zero scaling of $\Ng$.  We will therefore focus on the other two cases.

Let $\phi$ be a fixed generalized exponential map for $G$, and suppose that $\psi$ is any other generalized exponential map.  If $G$ is not of type $D_{p^n+1}$, then by \ref{canonicalnilpotentovergroup} we have that $\psi(X) \in \mathcal{W}_{(\phi,X)}$.  There are therefore elements $a_0,\ldots,a_{m-1} \in \Bbbk$ such that
$$\psi(X)=\phi(a_0X)\phi(a_1X^{[p]})\cdots \phi(a_{m-1}X^{[p^{m-1}]}).$$
Now, the mapping $Y \mapsto \phi(a_0Y)\phi(a_1Y^{[p]})\cdots \phi(a_{m-1}Y^{[p^{m-1}]})$ is easily checked to be $G$-equivariant, hence it follows that for every $Y \in \Ng$,  
$$\psi(Y)=\phi(a_0Y)\phi(a_1Y^{[p]})\cdots \phi(a_{m-1}Y^{[p^{m-1}]}).$$
Because $\psi$ is a generalized exponential map, we have
$$\psi(X^{[p^i]})=\psi(X)^{p^i}$$
for all $i>0$.  This then forces an equality
$$\phi(a_0X^{[p]})\cdots \phi(a_{m-2}X^{[p^{m-1}]})=\phi({a_0}^pX^{[p]})\cdots \phi({a_{m-2}}^pX^{[p^{m-1}]})$$
from which it follows that $a_0,\ldots,a_{m-2} \in \mathbb{F}_p$.  Of course, $a_0$ also must be non-zero for $\psi$ to be an isomorphism.  On the other hand, one can easily check that for any sequence $a_0,\ldots,a_{m-1}$ satisfying these conditions, the map
$$\psi(X)=\phi(a_0X)\phi(a_1X^{[p]})\cdots \phi(a_{m-1}X^{[p^{m-1}]})$$
is a generalized exponential map for $G$.

Suppose now that $G$ is of type $D_{p^n+1}$ for some $n>0$.  Fix $\theta$ an associated cocharacter of $X$ and a basis for $C_{\g}(X)$ as in Proposition \ref{weightsofcenter}.  Here, $r = p^n+1$, and $X_r$ is a $\theta(\mathbb{G}_m)$-weight vector of weight $2p^n$ that is linearly independent from $X^{[p^n]}$.  We claim that this case is different from the previous case as there is now an extra parameter involved.  That is, an arbitrary generalized exponential map $\psi$ is related to $\phi$ by the equation
$$\psi(X)=\phi(a_0X)\phi(a_1X^{[p]})\cdots \phi(a_{m-1}X^{[p^{m-1}]})\phi(b\gamma_r(X)),$$
with $\gamma_r$ as in Lemma \ref{likemultilinear}.  By the statement of that lemma, $\gamma_r(X^{[p^i]})$ is a $\theta(\mathbb{G}_m)$-weight vector of weight $2p^ip^n=2p^{i+n}$, and by $G$-equivariance it follows that $C_G(X)$ centralizes $\gamma_r(X^{[p^i]})$ so that $\gamma_r(X^{[p^i]}) \in C_\g(X)$.  But $p^{i+n} > 2(p^n+1)-3$ if $i > 0$, so by Table \ref{table:1} and Proposition \ref{weightsofcenter} we have that $\gamma_r(X^{[p^i]}) = 0$ if $i > 0$.  The same $\theta(\mathbb{G}_m)$ weight space considerations show that $X_r^{[p]} = 0$ also.  Finally, every Springer isomorphism (in characteristic $p$) sends $[p]$-nilpotent elements to $p$-unipotent elements, so that $\phi(b\gamma_r(X))^p=1$.  From this we verify that $\psi(X)^p=\psi(X^{[p]})$, and as before it can be verified directly that $\psi$ is a generalized exponential map.
\end{proof}

\begin{remark}
If we pick a $\Zp$ group scheme $G_{\Zp}$ that base changes to $G$ as before, then we have shown that we can find a generalized exponential map $\phi$ that arises over $\Zp$, so in particular arises over $\Fp$.  We could therefore further restrict to only considering those $\psi$ that also arise over $\Fp$, in which case we would replace all occurrences of $\Bbbk$ in the previous theorem with $\Fp$ (evidently there can be only finitely many such maps arising over $G_{\Fp}$).
\end{remark}

\section{When the characteristic is not separably good}

Let $G_{sc}$ be the simply connected group that is isogenous to $G$.  We conclude with a look at what happens in good characteristic when the covering map $G_{sc} \rightarrow G$ is not separable, which is a type $A$ phenomenon only.  The issues here are more subtle than when the characteristic is bad, as in that case there cannot even be a $G$-equivariant bijection due to the differing number of $G$-orbits in the respective varieties.

Denote by $\text{Pr}$ the covering map.  It induces $G$-equivariant bijective morphisms,
\begin{equation}\label{inducedmaps}
\text{pr}: \mathcal{N}(\mathfrak{g}_{sc}) \rightarrow \mathcal{N}(\mathfrak{g}) \quad \text{and} \quad \text{Pr}:\mathcal{U}(G_{sc}) \rightarrow \mathcal{U}(G).
\end{equation}
Because these maps are both bijections on the respective $\Bbbk$-points, any Springer isomorphism $\phi$ for $G_{sc}$ induces a $G$-equivariant bijective map $\overline{\phi}$ from $\mathcal{N}(\mathfrak{g})$ to $\mathcal{U}(G)$. When the covering map is separable, the maps in (\ref{inducedmaps}) are indeed isomorphisms, and one can apply an obvious argument to see that Springer isomorphisms for $G_{sc}$ yield Springer isomorphisms for $G$ (and vice-versa).  Even when the covering map is not separable, one can still show that $\overline{\phi}$ satisfies some nice properties.

The following was pointed out to us by J.-P. Serre.

\begin{prop}
Any Springer isomorphism $\phi$ for $G_{sc}$ induces a map
$$\overline{\phi}: \mathcal{N}(\mathfrak{g}) \rightarrow \mathcal{U}(G)$$
having the following properties:
\begin{enumerate}
\item it is a $G$-equivariant homeomorphism. 
\item its graph is a closed irreducible subvariety of $\mathcal{N}(\mathfrak{g}) \times \mathcal{U}(G)$.
\end{enumerate}

\end{prop}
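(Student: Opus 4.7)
The plan is to define $\overline{\phi}$ set-theoretically on $\Bbbk$-points as the composition $\mathrm{Pr}\circ\phi\circ\mathrm{pr}^{-1}$; this makes sense because $\mathrm{pr}$ is given to be a bijection on $\Bbbk$-points, even though $\mathrm{pr}^{-1}$ is not a morphism. The $G$-equivariance of $\overline{\phi}$ is then automatic: $\phi$ is $G_{sc}$-equivariant, while $\mathrm{pr}$ and $\mathrm{Pr}$ intertwine the $G_{sc}$- and $G$-actions through the central isogeny $G_{sc}\to G$.

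For the homeomorphism statement (1), the key observation is that in the non-separable type-$A$ situation the kernel of $G_{sc}\to G$ is an infinitesimal (hence radicial) group subscheme, so the isogeny is purely inseparable. Consequently $\mathrm{pr}$ and $\mathrm{Pr}$ are bijective morphisms of irreducible varieties of the same dimension; the induced function-field extensions are therefore finite and purely inseparable, which is to say that both morphisms are radicial. A bijective radicial morphism of finite-type reduced schemes is a universal homeomorphism, so $\mathrm{pr}$ and $\mathrm{Pr}$ are in particular homeomorphisms in the Zariski topology. Since $\phi\colon\mathcal{N}(\mathfrak{g}_{sc})\to\mathcal{U}(G_{sc})$ is an isomorphism of varieties, the set-theoretic composition $\overline{\phi}=\mathrm{Pr}\circ\phi\circ\mathrm{pr}^{-1}$ is a $G$-equivariant homeomorphism.

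For (2), apply the same principle to the product morphism
$$\mathrm{pr}\times\mathrm{Pr}\colon \mathcal{N}(\mathfrak{g}_{sc})\times\mathcal{U}(G_{sc})\longrightarrow \Ng\times\Ug,$$
which is again bijective and radicial (pure inseparability is stable under taking products), hence a universal homeomorphism, and in particular a closed map. The graph $\Gamma_\phi$ of $\phi$ is closed (as $\phi$ is a morphism) and irreducible (since the first projection identifies it with $\mathcal{N}(\mathfrak{g}_{sc})$, which is irreducible in good characteristic). Its image under $\mathrm{pr}\times\mathrm{Pr}$ is exactly $\Gamma_{\overline{\phi}}$, and the image of a closed irreducible set under a closed continuous map is closed and irreducible.

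The main obstacle is really just to verify the radicialness of $\mathrm{pr}$ and $\mathrm{Pr}$: bijectivity on $\Bbbk$-points is given, but one must translate this into pure inseparability of the function-field extensions. For a dominant morphism of irreducible varieties of the same dimension, the separable degree of the induced field extension equals the generic size of the fiber over a $\Bbbk$-point, which bijectivity forces to be $1$; combined with quasi-finiteness, this gives the radicial property and hence the universal homeomorphism conclusion used above.
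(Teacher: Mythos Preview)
Your argument is correct in outline and, for part (2), essentially identical to the paper's: both push the closed irreducible graph $\Gamma_\phi$ forward under the product map $\mathrm{pr}\times\mathrm{Pr}$, after first establishing that this product map is a homeomorphism. The difference lies in how part (1) is justified. The paper simply invokes Zariski's Main Theorem to pass from ``bijective morphism'' to ``homeomorphism'' (implicitly using that the nilpotent and unipotent varieties are normal in good characteristic, so that a bijective quasi-finite morphism is forced to be finite, hence closed). You instead argue via pure inseparability and radicialness to obtain a universal homeomorphism. Both routes are valid and lead to the same conclusion; yours is more scheme-theoretic and makes the mechanism (purely inseparable function-field extension) explicit, while the paper's is terser and leans on a classical black box.

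Two small points are worth flagging. First, your ``key observation'' that the kernel of $G_{sc}\to G$ is infinitesimal is not quite right in general: in type $A_{n-1}$ the center is $\mu_n$, and if $n=p^am$ with $p\nmid m>1$ the kernel of a non-separable isogeny can have a nontrivial \'etale part $\mu_m$ alongside the infinitesimal $\mu_{p^a}$. Fortunately this does not matter, because your actual argument for radicialness (in the final paragraph) uses only the given bijectivity on $\Bbbk$-points, not the structure of the kernel. Second, the assertion ``a bijective radicial morphism of finite-type reduced schemes is a universal homeomorphism'' is false without further hypotheses (consider $(\mathbb{A}^1\setminus\{0\})\sqcup\{\mathrm{pt}\}\to\mathbb{A}^1$); one needs irreducibility together with normality of the target, or finiteness, to conclude. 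In the present setting the nilpotent and unipotent varieties are irreducible and normal in good characteristic, so the conclusion stands, but strictly speaking this is the same normality input that underlies the paper's appeal to Zariski's Main Theorem.
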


\begin{proof}
(1) The $G$-equivariance is clear, and that it is a homeomorphism is a consequence of the bijectivity of the maps in (\ref{inducedmaps}) and Zariski's Main Theorem.  (2)  The graph of $\phi$ is a closed subvariety of $\mathcal{N}(\g_{sc}) \times \mathcal{U}(G_{sc})$ (since $\mathcal{N}(\g_{sc})$ and $\mathcal{U}(G_{sc})$ are isomorphic).  Now, the map $$\text{pr} \times \text{Pr}: \mathcal{N}(\g_{sc}) \times \mathcal{U}(G_{sc}) \rightarrow \Ng \times \mathcal{U}(G)$$
is also a homeomorphism, and by construction sends the graph of $\phi$ to the graph of $\overline{\phi}$.  The graph of $\overline{\phi}$ is then a closed subset, hence a closed subvariety, and it is irreducible since the graph of $\phi$ is irreducible, a topological property, and the map $\text{pr} \times \text{Pr}$ is a homeomorphism.
\end{proof}

We now will show that in general it is not an isomorphism of varieties, by looking specifically at the case of $PGL_2$.  The following is a variation on a result communicated to us by Jay Taylor.

\begin{theorem}
There exists is a Springer isomorphism for $PGL_2$ if and only if $\overline{\phi}$ is such an isomorphism, where $\phi$ is any Springer isomorphism for $SL_2$.
\end{theorem}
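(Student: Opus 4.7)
The backward direction is immediate: if $\overline{\phi}$ is itself a Springer isomorphism for $PGL_2$, then one exists. For the forward direction, the plan is as follows.

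Suppose $\psi \colon \mathcal{N}(\mathfrak{g}) \to \mathcal{U}(PGL_2)$ is a Springer isomorphism. By the preceding proposition, $\overline{\phi}$ is already a $G$-equivariant homeomorphism with closed irreducible graph, so it suffices to exhibit $\overline{\phi}$ as the composition of $\psi$ with a variety automorphism of $\mathcal{N}(\mathfrak{g})$. I fix a regular nilpotent $X \in \mathfrak{g}$. Since both $\psi$ and $\overline{\phi}$ are $G$-equivariant bijections that preserve the open dense regular orbits, a stabilizer comparison shows that $\psi(X)$ and $\overline{\phi}(X)$ are regular unipotent elements both lying in $C_G(X)^0$. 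The first key step is to verify that in type $A_1$, where $C_{\mathfrak{g}}(X) = \Bbbk X$ is one-dimensional, the map $c \mapsto \overline{\phi}(cX)$ defines a bijection from $\Bbbk^{\times}$ onto the regular unipotents of $C_G(X)^0$. I would argue this by exploiting the $N_G(C_G(X))$-equivariance of $\overline{\phi}$ together with the transitivity of the action of the quotient $N_G(C_G(X))/C_G(X) \cong \mathbb{G}_m$ on both sides (via nonzero characters; transitivity persists in characteristic $2$ since $\Bbbk$ is algebraically closed).

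Consequently there is a unique $c \in \Bbbk^{\times}$ with $\psi(X) = \overline{\phi}(cX)$. Let $m_c \colon \mathcal{N}(\mathfrak{g}) \to \mathcal{N}(\mathfrak{g})$ denote scalar multiplication by $c$; it is a variety automorphism commuting with the $G$-action. By $G$-equivariance of both $\psi$ and $\overline{\phi}$, for every $Y = g \cdot X$ in the regular orbit,
\[
\psi(Y) = g \cdot \psi(X) = g \cdot \overline{\phi}(cX) = \overline{\phi}(cY),
\]
so $\psi$ and $\overline{\phi} \circ m_c$ agree on the dense regular orbit of $\mathcal{N}(\mathfrak{g})$.

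The final step promotes this agreement to an equality on all of $\mathcal{N}(\mathfrak{g})$ by a closed-graph argument. The graph of $\psi$ is closed since $\psi$ is a morphism, and the graph of $\overline{\phi} \circ m_c$ is the image of the closed irreducible graph of $\overline{\phi}$ (given by part~(2) of the preceding proposition) under the automorphism $(Y, u) \mapsto (c^{-1} Y, u)$ of $\mathcal{N}(\mathfrak{g}) \times \mathcal{U}(PGL_2)$, so both graphs are closed and irreducible. They share the subset indexed by the regular orbit, which is dense in each, and two closed irreducible subsets sharing a common dense subset must coincide. Therefore $\psi = \overline{\phi} \circ m_c$, so $\overline{\phi} = \psi \circ m_c^{-1}$ is a composition of variety isomorphisms; in particular $\overline{\phi}$ is a morphism, hence a Springer isomorphism. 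The main technical subtlety will be this density step: one cannot naively conclude that two continuous maps agreeing on a dense set are equal (the Zariski topology is non-Hausdorff), so the closed-graph formulation, and its reliance on the closedness of the graph of $\overline{\phi}$, is essential.
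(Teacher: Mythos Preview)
Your argument is correct and arrives at the same conclusion as the paper---that $\overline{\phi}$ and any hypothetical Springer isomorphism $\psi$ for $PGL_2$ differ only by a scaling on $\mathcal{N}(\mathfrak{g})$---but by a somewhat different route. The paper lifts $\psi$ through the bijections $\mathrm{pr}$ and $\mathrm{Pr}$ to obtain an $SL_2$-equivariant homeomorphism $\mathcal{N}(\mathfrak{sl}_2)\to\mathcal{U}(SL_2)$, asserts (without much elaboration) that any such homeomorphism must already be a Springer isomorphism for $SL_2$, invokes the fact that all Springer isomorphisms for $SL_2$ differ by a scalar, and then pushes back down. You instead stay on $PGL_2$ throughout: you locate the scalar $c$ directly via the transitive action of $N_G(C_G(X))/C_G(X)\cong\mathbb{G}_m$, and you replace the paper's lift-and-classify step by a closed-graph argument that handles the density issue explicitly (a point the paper glosses over). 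Your approach is more self-contained and does not rely on the simply connected cover; the paper's is shorter but leans on that passage to $SL_2$.

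One inaccuracy to flag: your parenthetical ``$C_{\mathfrak{g}}(X)=\Bbbk X$ is one-dimensional'' is false in the case at hand. For $\mathfrak{g}=\mathfrak{pgl}_2$ in characteristic~$2$ the Lie-algebra centralizer of a regular nilpotent is two-dimensional (in fact it is all of $\mathcal{N}(\mathfrak{pgl}_2)$), precisely because the centralizer scheme is not smooth when $p$ is not separably good. Fortunately this remark is not used anywhere in your actual reasoning: the bijection $\Bbbk^\times\to\{\text{regular unipotents in }C_G(X)^0\}$ that you need runs through the one-dimensional \emph{group} centralizer $C_G(X)^0$ and the torus action, not through $C_{\mathfrak{g}}(X)$, so the argument stands once the offending clause is deleted.
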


\begin{proof}
Suppose there exists a Springer isomorphism for $PGL_2$.  Then this map is in particular a $PGL_2$-equivariant homeomorphism between $\mathcal{N}(\mathfrak{pgl}_2)$ and $\mathcal{U}(PGL_2)$.  This is equivalent to an $SL_2$-equivariant homeomorphism from $\mathcal{N}(\mathfrak{sl}_2)$ to $\mathcal{U}(SL_2)$, which is necessarily a Springer isomorphism for $SL_2$.  Now, any two Springer isomorphisms for $SL_2$ can only differ by precomposing one with scalar multiplication by some $c \in \Bbbk^{\times}$ on $\mathcal{N}(\mathfrak{sl}_2)$.  Of course, scalar multiplication by $c$ on $\mathcal{N}(\mathfrak{pgl}_2)$ is a $PGL_2$-equivariant isomorphism, and is induced by the corresponding map on $\mathcal{N}(\mathfrak{sl}_2)$.  It follows that every Springer map for $SL_2$ then pushes down to a composite of two $PGL_2$-equivariant isomorphisms, so that every Springer isomorphism for $SL_2$ induces one for $PGL_2$.  This proves one direction of the claim in the theorem; the other direction is clear.
\end{proof}

\begin{cor}
There does not exist a Springer isomorphism for $PGL_2$ in characteristic $2$.
\end{cor}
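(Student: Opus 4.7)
The plan is to invoke the preceding theorem, which reduces the corollary to showing that for some (equivalently, any) Springer isomorphism $\phi$ of $SL_2$ in characteristic $2$, the induced map $\overline{\phi}: \mathcal{N}(\mathfrak{pgl}_2) \to \mathcal{U}(PGL_2)$ fails to be an isomorphism of varieties. I will prove the stronger statement that no isomorphism of varieties exists between $\mathcal{N}(\mathfrak{pgl}_2)$ and $\mathcal{U}(PGL_2)$, by showing that the two varieties have different Picard groups.

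First I would identify $\mathcal{N}(\mathfrak{pgl}_2) \cong \mathbb{A}^2$. In characteristic $2$ the identity matrix lies in $\mathfrak{sl}_2$, so $\mathfrak{pgl}_2 = \mathfrak{gl}_2/\Bbbk I$ is $3$-dimensional and the trace descends to a linear form on it (since $\textup{tr}(I) = 2 = 0$). The Cayley--Hamilton identity $X^2 = \textup{tr}(X)X - \det(X)I$ in $\mathfrak{gl}_2$ reduces to $[X]^{[2]} = \textup{tr}(X)\,[X]$ in $\mathfrak{pgl}_2$, and iterating gives $[X]^{[2^n]} = \textup{tr}(X)^{2^n - 1}[X]$; hence $[X]$ is $[2]$-nilpotent if and only if $\textup{tr}(X) = 0$. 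Therefore $\mathcal{N}(\mathfrak{pgl}_2)$ is the linear hyperplane $\ker(\textup{tr}) \cong \mathbb{A}^2$.

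Next I would identify $\mathcal{U}(PGL_2)$ with $\mathbb{P}^2 \setminus C$ for a smooth conic $C$. A class $[g] \in PGL_2$ is unipotent iff some representative $g \in GL_2$ has a single eigenvalue, iff the discriminant $(\textup{tr}\,g)^2 - 4\det g$ of its characteristic polynomial vanishes; in characteristic $2$ this collapses to $\textup{tr}(g) = 0$. Writing invertible trace-zero matrices as $\begin{pmatrix} a & b \\ c & a \end{pmatrix}$ with $a^2 \ne bc$ and taking the quotient by the central scaling $(a,b,c) \mapsto (\lambda a, \lambda b, \lambda c)$ identifies $\mathcal{U}(PGL_2)$ with $\mathbb{P}^2 \setminus V(a^2 - bc)$. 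A direct Jacobian check confirms that the conic $C := V(a^2 - bc)$ is smooth in $\mathbb{P}^2$: although $\partial_a(a^2 - bc) = 2a$ vanishes identically in characteristic $2$, the remaining derivatives $\partial_b = -c$ and $\partial_c = -b$ cannot simultaneously vanish on $C$ (they would force $a = 0$, giving the origin, which is not a point of $\mathbb{P}^2$).

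Finally I distinguish the two varieties via Picard groups: $\textup{Pic}(\mathbb{A}^2) = 0$, whereas for any integral closed curve $D \subset \mathbb{P}^2$ of degree $d$ the excision exact sequence gives $\textup{Pic}(\mathbb{P}^2 \setminus D) \cong \mathbb{Z}/d\mathbb{Z}$; applying this with $D = C$ of degree $2$ yields $\textup{Pic}(\mathcal{U}(PGL_2)) \cong \mathbb{Z}/2\mathbb{Z}$. Hence $\mathcal{N}(\mathfrak{pgl}_2) \not\cong \mathcal{U}(PGL_2)$ as varieties, so no Springer isomorphism for $PGL_2$ can exist in characteristic $2$. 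The most delicate step is the identification of $\mathcal{U}(PGL_2)$ with $\mathbb{P}^2 \setminus C$: because $SL_2 \to PGL_2$ is inseparable in characteristic $2$, one must be careful that the explicit parametrization above really does recover the reduced variety structure on the set of unipotent classes rather than some inseparable thickening; once this is secured, the Picard-group calculation is routine.
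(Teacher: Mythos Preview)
Your proof is correct and takes a genuinely different route from the paper. The paper (via the appendix computation) realises $PGL_2$ inside $GL_3$ through the adjoint representation, writes down $\overline{\phi}$ explicitly for the Springer isomorphism $\phi(X)=I_2+X$ of $SL_2$, and observes that both $\overline{\phi}$ and its inverse involve square roots of coordinate functions, hence are not morphisms. Invoking the preceding theorem then finishes. Your argument instead proves the \emph{stronger} statement that $\mathcal{N}(\mathfrak{pgl}_2)$ and $\mathcal{U}(PGL_2)$ are not even abstractly isomorphic as varieties: you identify $\mathcal{N}(\mathfrak{pgl}_2)\cong\mathbb{A}^2$ and $\mathcal{U}(PGL_2)\cong\mathbb{P}^2\setminus C$ for a smooth conic $C$, and distinguish them by $\mathrm{Pic}(\mathbb{A}^2)=0$ versus $\mathrm{Pic}(\mathbb{P}^2\setminus C)\cong\mathbb{Z}/2\mathbb{Z}$. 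This is more conceptual and yields more: no equivariance hypothesis is needed, and one sees an invariant obstruction rather than a failed formula. The paper's approach, on the other hand, is entirely elementary and self-contained, requiring no Picard-group machinery.

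One small remark on your caveat at the end: your worry about inseparability is slightly misplaced. You parametrize $\mathcal{U}(PGL_2)$ via the quotient $GL_2\to PGL_2$, which is a $\mathbb{G}_m$-torsor and hence smooth in every characteristic; the inseparable map $SL_2\to PGL_2$ never enters your argument. So the identification $\mathcal{U}(PGL_2)\cong\mathbb{P}^2\setminus C$ is in fact on solid ground without further justification: the preimage of $\mathcal{U}(PGL_2)$ in $GL_2$ is the reduced trace-zero locus, and the $\mathbb{G}_m$-quotient is exactly $\mathbb{P}^2\setminus C$.
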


\begin{proof}
This follows from the previous theorem, and the computation by Jay Taylor found in the appendix which shows that a particular Springer isomorphism for $SL_2$ pushes down to a map that is not a variety morphism.
\end{proof}

\begin{remark}
As alluded to above, the note shared with us by Taylor contained not only the computation that follows in the Appendix, but also a proof of the previous corollary, using similar reasoning to that given here.
\end{remark}

\section{Appendix: Computing $\overline{\phi}$ For $PGL_2$ (Jay Taylor)}

Let $\Bbbk$ have characteristic $2$, and fix the usual ordered basis $\{E,H,F\} \subseteq \mathfrak{sl}_2$, namely
%%%%
\begin{equation*}
E = \begin{bmatrix}
0 & 1\\
0 & 0
\end{bmatrix}
\qquad
H = \begin{bmatrix}
1 & 0\\
0 & 1
\end{bmatrix}
\qquad
F = \begin{bmatrix}
0 & 0\\
1 & 0
\end{bmatrix}.
\end{equation*}
%%%%

The choice of basis $\{E,H,F\}$ gives an identification of $GL(\mathfrak{sl}_2)$ with $GL_3$. Hence we may, and will, consider $PGL_2$ as a closed subgroup of $GL_3$, and $\mathfrak{pgl}_2$ as a subalgebra of $\mathfrak{gl}_3$.  The following is a straightforward computation left to the reader.

\begin{lemma}\label{lem:Ad-desc}
With respect to the basis $\{E,H,F\} \subseteq \mathfrak{sl}_2$ we have
%%%%
\begin{equation*}
\Ad \begin{bmatrix}
x_1 & x_2\\
x_3 & x_4
\end{bmatrix}
=
\begin{bmatrix}
x_1^2 & 0 & x_2^2\\
x_1x_3 & 1 & x_2x_4\\
x_3^2 & 0 & x_4^2
\end{bmatrix}
\end{equation*}
%%%%
for any $\displaystyle \left[\begin{smallmatrix}x_1 & x_2\\ x_3 & x_4 \end{smallmatrix}\right] \in SL_2$.
\end{lemma}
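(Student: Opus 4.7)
The plan is to verify the claim by direct matrix computation, exploiting the simplifying features of characteristic $2$. First I would note that in characteristic $2$ the matrix $H$ displayed in the setup is simply the $2\times 2$ identity $I_2$ (so $H$ is central in $GL_2$ and has trace $0$, which is why it belongs to $\mathfrak{sl}_2$ here). Consequently, for any $g\in SL_2$ we have $gHg^{-1}=H$, which immediately produces the middle column $(0,1,0)^{T}$ of the claimed matrix.

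For the other two columns, I would use that in characteristic $2$, for $g=\left[\begin{smallmatrix}x_1 & x_2\\ x_3 & x_4\end{smallmatrix}\right]\in SL_2$ the inverse is $g^{-1}=\left[\begin{smallmatrix}x_4 & x_2\\ x_3 & x_1\end{smallmatrix}\right]$ (using $-1=1$ and $\det g=1$). Then I would directly multiply out $gEg^{-1}$ and $gFg^{-1}$. A short computation gives
\begin{equation*}
gEg^{-1}=\begin{bmatrix} x_1x_3 & x_1^2\\ x_3^2 & x_1x_3\end{bmatrix},\qquad gFg^{-1}=\begin{bmatrix} x_2x_4 & x_2^2\\ x_4^2 & x_2x_4\end{bmatrix}.
\end{equation*}
Each of these has equal diagonal entries, so it is a scalar multiple of $I$ plus a strict upper and a strict lower triangular piece; hence writing it in the ordered basis $\{E,H,F\}$ (with $H=I$) gives coefficients $(x_1^2,\,x_1x_3,\,x_3^2)$ and $(x_2^2,\,x_2x_4,\,x_4^2)$ respectively. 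These are precisely the first and third columns of the asserted matrix.

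There is no real obstacle here: the only conceptual point is recognizing that $H$ is the identity in characteristic $2$ (so it is fixed by $\Ad$ and the $H$-row of the displayed matrix of $\Ad(g)$ reads off the diagonal entries of the conjugates of $E$ and $F$). Once that is noted, the result follows from two $2\times 2$ matrix multiplications, which I would indicate but not grind through in the final proof.
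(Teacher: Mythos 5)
Your computation is correct: with $H=I_2$ central in characteristic $2$, the conjugates $gEg^{-1}$ and $gFg^{-1}$ come out exactly as you state, and reading off their coordinates in the ordered basis $\{E,H,F\}$ gives precisely the displayed columns. The paper explicitly leaves this as ``a straightforward computation left to the reader,'' and your argument is that computation carried out, so it matches the intended proof.
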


One readily checks that the image of $\Ad$ coincides with the following closed set
%%%%
\begin{equation}\label{eq:G-def}
PGL_2 = \left\{\left.
\begin{bmatrix}
x_1 & 0 & x_2\\
x_3 & 1 & x_4\\
x_5 & 0 & x_6
\end{bmatrix}
\in \mathfrak{gl}_3
\,\right|\,
\begin{gathered}
0 = x_1x_5+x_3^2\\ 0 = x_2x_6+x_4^2\\ 0 = x_1x_6+x_2x_5+1
\end{gathered}
\right\}.
\end{equation}
%%%%
It will also be useful to note that the inverse $\Ad^{-1} : PGL_2 \to SL_2$ is given by
%%%%
\begin{equation}\label{eq:Ad-inv}
\Ad^{-1}\begin{bmatrix}
x_1 & 0 & x_2\\
x_3 & 1 & x_4\\
x_5 & 0 & x_6
\end{bmatrix}
=
\begin{bmatrix}
\sqrt{x_1} & \sqrt{x_2}\\
\sqrt{x_5} & \sqrt{x_6}\\
\end{bmatrix}.
\end{equation}
%%%%

The unipotent variety $\mathcal{U}(SL_2)$ is given by
%%%%
\begin{equation*}
\mathcal{U}(SL_2) = \left\{\left.\begin{bmatrix}x_1&x_2\\x_3&x_1\end{bmatrix} \in \mathfrak{gl}_2 \,\right|\, x_1^2+x_2x_3 = 1\right\}.
\end{equation*}
%%%%
The bijective morphism $\Ad : SL_2 \to PGL_2$ restricts to a bijective morphism $\Ad : \mathcal{U}(SL_2) \to \mathcal{U}(PGL_2)$. From \cref{eq:G-def} we get the following.

\begin{lemma}\label{lem:unipotent-variety}
$\mathcal{U}(PGL_2)$ is given by the closed subset
%%%%
\begin{equation*}
\left\{\left.\begin{bmatrix}
x_1 & 0 & x_2\\
x_3 & 1 & x_4\\
x_5 & 0 & x_1
\end{bmatrix}
\in \mathfrak{gl}_3
\,\right|\,
\begin{gathered}
0 = x_1x_5+x_3^2\\
0 = x_1x_2+x_4^2\\
0 = x_1^2+x_2x_5+1
\end{gathered}
\right\}.
\end{equation*}
%%%%
\end{lemma}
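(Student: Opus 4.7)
The plan is to compute the image of $\mathcal{U}(SL_2)$ under $\Ad$ directly, using Lemma \ref{lem:Ad-desc} and the parameterization of $\mathcal{U}(SL_2)$ recalled just before the statement. Since $\Bbbk$ has characteristic $2$, a unipotent element of $SL_2$ has trace zero, i.e.\ both diagonal entries equal to a common value $a$, subject to $a^2 + bc = 1$. So every element of $\mathcal{U}(SL_2)$ is of the form $\left[\begin{smallmatrix} a & b \\ c & a \end{smallmatrix}\right]$ with $a^2 + bc = 1$.

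First I would apply the explicit formula in Lemma \ref{lem:Ad-desc} to this matrix, obtaining
\begin{equation*}
\Ad\begin{bmatrix} a & b \\ c & a \end{bmatrix} = \begin{bmatrix} a^2 & 0 & b^2 \\ ac & 1 & ab \\ c^2 & 0 & a^2 \end{bmatrix}.
\end{equation*}
Setting $x_1 = a^2$, $x_2 = b^2$, $x_3 = ac$, $x_4 = ab$, $x_5 = c^2$, a direct check using only the fact that $2 = 0$ in $\Bbbk$, together with the relation $a^2 + bc = 1$, verifies the three defining equations of the proposed set: $x_1 x_5 + x_3^2 = a^2 c^2 + a^2 c^2 = 0$, $x_1 x_2 + x_4^2 = a^2 b^2 + a^2 b^2 = 0$, and $x_1^2 + x_2 x_5 + 1 = (a^2 + bc)^2 + 1 = 0$. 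In particular the $(3,3)$ entry of $\Ad\left[\begin{smallmatrix} a & b \\ c & a \end{smallmatrix}\right]$ agrees with the $(1,1)$ entry, consistent with the shape prescribed in the statement. This establishes one inclusion.

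For the reverse inclusion, I would take an arbitrary matrix in the proposed closed set and invert the construction using \cref{eq:Ad-inv}. Since $\Bbbk$ is algebraically closed of characteristic $2$, the Frobenius is bijective on $\Bbbk$, so we may set $a = \sqrt{x_1}$, $b = \sqrt{x_2}$, $c = \sqrt{x_5}$. The element $\left[\begin{smallmatrix} a & b \\ c & a \end{smallmatrix}\right]$ of $\mathfrak{gl}_2$ lies in $SL_2$: indeed, the relation $x_1^2 + x_2 x_5 = 1$ becomes $(a^2 + bc)^2 = 1$, and since squaring is injective this forces $a^2 + bc = 1$, placing the matrix in $\mathcal{U}(SL_2)$. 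Finally one must confirm that $\Ad$ applied to this matrix reproduces the original point, which reduces to showing $ac = x_3$ and $ab = x_4$; the constraints $x_1 x_5 = x_3^2$ and $x_1 x_2 = x_4^2$ give $(ac)^2 = x_3^2$ and $(ab)^2 = x_4^2$, and again injectivity of squaring in characteristic $2$ closes the argument.

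There is no real obstacle here; the computation is entirely mechanical, and the only conceptual point is that bijectivity of Frobenius on $\Bbbk$ lets us take unique square roots, so the two closed sets match as subvarieties (indeed even set-theoretically), not merely up to a Frobenius twist. This explicit description of $\mathcal{U}(PGL_2)$ is what feeds into the appendix computation establishing that the induced map $\overline{\phi}$ fails to be a morphism of varieties.
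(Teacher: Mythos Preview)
Your proof is correct, but it takes a different route from the paper's. The paper works entirely inside $GL_3$: starting from the description \cref{eq:G-def} of $PGL_2$, it computes the characteristic polynomial of a general element,
\[
(1-t)\bigl(t^2+(x_1+x_6)t+x_2x_5+x_1x_6\bigr),
\]
and imposes that it equal $(t-1)^3$. This forces $x_1=x_6$ and $x_1^2+x_2x_5=1$, and substituting $x_6=x_1$ into the three defining equations of $PGL_2$ yields exactly the set in the lemma. By contrast, you pull everything back to $\mathcal{U}(SL_2)$ via $\Ad$, push forward to verify one inclusion, and then use \cref{eq:Ad-inv} together with bijectivity of Frobenius on $\Bbbk$ to invert for the other. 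The paper's argument is shorter and avoids any appeal to square roots or to the bijectivity of $\Ad$ on unipotent varieties; your argument, on the other hand, meshes directly with the explicit $\Ad$/$\Ad^{-1}$ formulas that the appendix uses immediately afterwards to compute $\overline{\phi}$, so it is arguably a more natural warm-up for what follows.
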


\begin{proof}
The characteristic polynomial of a matrix in \cref{eq:G-def} is
%%%%
\begin{equation*}
(1-t)(t^2+(x_1+x_6)t+x_2x_5+x_1x_6).
\end{equation*}
%%%%
The matrix is unipotent if and only if this characteristic polynomial is $(t-1)^3$. Comparing these polynomials gives the result.
\end{proof}

To obtain the Lie algebra $\mathfrak{pgl}_2$ we compute the locus of the differentials of the defining polynomials given in \cref{eq:G-def}. One obtains a $3$-dimensional space containing $\mathfrak{pgl}_2$, which must be $\mathfrak{pgl}_2$ since $\dim\mathfrak{pgl}_2 = 3$. Specifically, we get that
%%%%
\begin{equation}
\mathfrak{pgl}_2 = \left\{
\begin{bmatrix}
x_1 & 0 & 0\\
x_2 & 0 & x_3\\
0 & 0 & x_1
\end{bmatrix}
\in \mathfrak{gl}_3
\right\}.
\end{equation}

Let $\ad : \mathfrak{sl}_2 \to \mathfrak{gl}_3$ be the adjoint representation, i.e., the differential of $\Ad$.

\begin{lemma}\label{lem:ad-desc}
With respect to the basis $\{E,H,F\} \subseteq \mathfrak{sl}_2$ we have
%%%%
\begin{equation*}
\ad \begin{bmatrix}
x_1 & x_2\\
x_3 & x_1
\end{bmatrix}
=
\begin{bmatrix}
0 & 0 & 0\\
x_3 & 0 & x_2\\
0 & 0 & 0
\end{bmatrix}
\end{equation*}
%%%%
for any $\left[\begin{smallmatrix}x_1 & x_2\\ x_3 & x_1 \end{smallmatrix}\right] \in \mathfrak{sl}_2$.
\end{lemma}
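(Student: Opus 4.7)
The plan is to read off $\ad(X)$ directly from its definition as the bracket map on $\mathfrak{sl}_2$. Since $\ad$ is being represented with respect to the ordered basis $\{E,H,F\}$, the three columns of the matrix $\ad(X)$ are precisely the coordinate vectors of $[X,E]$, $[X,H]$, and $[X,F]$ in that basis. Hence the lemma reduces to three short bracket calculations for a general $X = \left[\begin{smallmatrix}x_1 & x_2\\ x_3 & x_1\end{smallmatrix}\right] \in \mathfrak{sl}_2$.

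The first step is to observe that in characteristic $2$ the identity matrix $H = I_2$ actually lies in $\mathfrak{sl}_2$ (its trace $1+1$ vanishes), and since it is central in $\mathfrak{gl}_2$ one has $[X,H] = 0$ for every $X$. This accounts for the zero middle column of the stated matrix. The second step is to compute $[X,E]$ and $[X,F]$ by expanding $XE - EX$ and $XF - FX$: each turns out to be a scalar multiple of $I_2 = H$, namely $x_3 H$ and $x_2 H$ respectively. The cancellations that bring this about rely precisely on characteristic $2$: the off-diagonal entries of each bracket vanish because the diagonal of $X$ is $(x_1, x_1)$ rather than $(x_1, -x_1)$, and the two diagonal entries coincide because $-1 = 1$. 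Writing $x_3 H$ and $x_2 H$ in coordinates fills in the first and third columns of the matrix and finishes the proof.

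As a consistency check, one could instead obtain the formula by differentiating the expression for $\Ad$ from Lemma \ref{lem:Ad-desc} at the identity of $SL_2$. Substituting $x_i \mapsto 1 + \epsilon x_i$ on the diagonal and $x_i \mapsto \epsilon x_i$ off the diagonal, the diagonal entries of the $3 \times 3$ matrix — which are squares — contribute only terms of order $\epsilon^2$ in characteristic $2$ and so vanish to linear order, while the mixed products $x_1 x_3$ and $x_2 x_4$ linearize to give exactly $x_3$ and $x_2$ in the $(2,1)$ and $(2,3)$ positions.

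There is no real obstacle here; the statement is a routine computation. The only content worth flagging is the role of the characteristic: $H$ lies in $\mathfrak{sl}_2$ and is central in $\mathfrak{gl}_2$, so $H$ is already in the kernel of $\ad$, and this degeneracy of the adjoint representation is exactly what makes the appendix tick — it is consistent with the broader point that the non-separability of $SL_2 \to PGL_2$ in characteristic $2$ is visible at the infinitesimal level.
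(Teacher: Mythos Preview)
Your proof is correct; the bracket computations and the alternative differentiation argument both check out. The paper itself does not supply a proof of this lemma---it is stated as a bare computational fact in the same spirit as Lemma~\ref{lem:Ad-desc}, which is explicitly ``left to the reader''---so your direct calculation of $[X,E]$, $[X,H]$, $[X,F]$ is exactly the kind of verification the paper implicitly assumes.
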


\begin{lemma}
The nilpotent cone $\mathcal{N}(\mathfrak{pgl}_2) = [\mathfrak{pgl}_2,\mathfrak{pgl}_2]$, and is therefore a linear subspace of $\mathfrak{pgl}_2$, isomorphic as a variety to $\mathbb{A}^2$.
\end{lemma}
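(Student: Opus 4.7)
The plan is to identify both $\mathcal{N}(\mathfrak{pgl}_2)$ and $[\mathfrak{pgl}_2,\mathfrak{pgl}_2]$ explicitly inside the parametrization of $\mathfrak{pgl}_2$ already given, and to observe that each coincides with the two-dimensional linear subspace cut out by $x_1 = 0$.

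First I would write down the characteristic polynomial of a general element
$$A = \begin{bmatrix} x_1 & 0 & 0 \\ x_2 & 0 & x_3 \\ 0 & 0 & x_1 \end{bmatrix} \in \mathfrak{pgl}_2.$$
Because the middle column of $tI - A$ is $(0,t,0)^T$, expansion along that column gives $\det(tI - A) = t\,(t-x_1)^2$. So $A$ is nilpotent if and only if $x_1 = 0$, and in that case a quick multiplication shows $A^2 = 0$ directly (alternatively, invoke Cayley--Hamilton). This identifies $\mathcal{N}(\mathfrak{pgl}_2)$ with the linear subspace $\{x_1 = 0\}$.

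Next I would compute the bracket of two general elements $A, B$ of $\mathfrak{pgl}_2$ inside $\mathfrak{gl}_3$. A short direct multiplication (the third column and third row of any element of $\mathfrak{pgl}_2$ are supported only on the $(1,3)$ block, which simplifies everything) yields
$$[A,B] = \begin{bmatrix} 0 & 0 & 0 \\ a_2 b_1 - a_1 b_2 & 0 & a_3 b_1 - a_1 b_3 \\ 0 & 0 & 0 \end{bmatrix},$$
so every bracket lies in the $x_1 = 0$ subspace. Conversely, taking $A = \operatorname{diag}(1,0,1)$ and $B$ with arbitrary $(2,1)$ and $(2,3)$ entries (and zeros elsewhere), the displayed formula realizes any element of $\{x_1 = 0\}$ as a single bracket. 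Hence $[\mathfrak{pgl}_2,\mathfrak{pgl}_2]$ is exactly this same two-dimensional subspace.

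Combining the two identifications gives the asserted equality $\mathcal{N}(\mathfrak{pgl}_2) = [\mathfrak{pgl}_2,\mathfrak{pgl}_2]$, and this common subspace is a closed linear subspace of $\mathfrak{gl}_3$ of dimension $2$, hence isomorphic as a variety to $\mathbb{A}^2$. There is no real obstacle here: once one has the explicit description of $\mathfrak{pgl}_2$ from the preceding lemma, everything reduces to a short $3 \times 3$ matrix computation, and the fact that $\operatorname{char}(\Bbbk) = 2$ does not cause any sign issues since it is only used implicitly.
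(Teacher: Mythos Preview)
Your argument is correct. The paper does not supply a proof of this lemma, treating it as an immediate computation from the explicit matrix description of $\mathfrak{pgl}_2$ already obtained; your direct calculation of the characteristic polynomial and of the bracket is precisely the routine verification the paper leaves to the reader.
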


Let $\phi$ be the Springer isomorphism for $SL_2$ given by $\phi(X) = I_2+X$.  One easily obtains the following.

\begin{lemma}
The maps $\overline{\phi}: \mathcal{N}(\mathfrak{pgl}_2) \to \mathcal{U}(PGL_2)$ and $(\overline{\phi})^{-1} : \mathcal{U}(PGL_2) \to \mathcal{N}(\mathfrak{pgl}_2)$ are given by
%%%%
\begin{align*}
\overline{\phi}\begin{bmatrix}
0 & 0 & 0\\
x & 0 & y\\
0 & 0 & 0
\end{bmatrix}
&=
\begin{bmatrix}
xy+1 & 0 & y^2\\
x(\sqrt{xy}+1) & 1 & y(\sqrt{xy}+1)\\
x^2 & 0 & xy+1
\end{bmatrix}\\
(\overline{\phi})^{-1}
\begin{bmatrix}
x_1 & 0 & x_2\\
x_3 & 1 & x_4\\
x_5 & 0 & x_1
\end{bmatrix}
&=
\begin{bmatrix}
0 & 0 & 0\\
\sqrt{x_5} & 0 & \sqrt{x_2}\\
0 & 0 & 0
\end{bmatrix}.
\end{align*}
%%%%
Hence neither map is a morphism of varieties.
\end{lemma}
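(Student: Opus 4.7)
The approach is to compute $\overline{\phi}$ directly from its definition: given $Y \in \mathcal{N}(\mathfrak{pgl}_2)$, find the unique nilpotent preimage $\tilde Y \in \mathcal{N}(\mathfrak{sl}_2)$ under $\text{pr} = \ad$, then apply $\phi(\tilde Y) = I_2 + \tilde Y$, then push back down via $\Ad$ using \cref{lem:Ad-desc}. The source of the square roots can be pinpointed in advance: in characteristic $2$ the matrix $I_2$ has trace zero and so belongs to $\mathfrak{sl}_2$, and since it is central in $\mathfrak{gl}_2$ it spans $\ker(\ad)$. Each fiber of $\ad$ is therefore a line, and singling out the nilpotent representative imposes one polynomial condition whose solution will not be a regular function of the base point.

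Concretely, given $Y = \begin{bmatrix}0 & 0 & 0\\ x & 0 & y\\ 0 & 0 & 0\end{bmatrix}$, \cref{lem:ad-desc} says that the preimage $\ad^{-1}(Y)$ consists of all matrices $\begin{bmatrix}a & y\\ x & a\end{bmatrix}$ with $a \in \Bbbk$. The characteristic polynomial of such a matrix in characteristic $2$ is $t^2 + (a^2 + xy)$, so nilpotence forces $a = \sqrt{xy}$. Applying $\phi$ yields $\begin{bmatrix}1+\sqrt{xy} & y\\ x & 1+\sqrt{xy}\end{bmatrix}$, and then \cref{lem:Ad-desc}, together with the characteristic-$2$ identity $(1+\sqrt{xy})^2 = 1 + xy$, produces exactly the stated formula for $\overline{\phi}(Y)$.

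For the inverse, we start from a unipotent element $u$ as described in \cref{lem:unipotent-variety} (so in particular its $(3,3)$-entry equals its $(1,1)$-entry), apply the formula \cref{eq:Ad-inv} for $\Ad^{-1}$ to obtain an element of $\mathcal{U}(SL_2)$, subtract $I_2$ to invert $\phi$, and reapply $\ad$ via \cref{lem:ad-desc}; the square roots in \cref{eq:Ad-inv} survive into the final formula. The concluding sentence of the lemma is then immediate, since the functions $\sqrt{xy}$ and $\sqrt{x_1}, \sqrt{x_2}, \sqrt{x_5}$ do not lie in the coordinate rings of $\mathcal{N}(\mathfrak{pgl}_2)$ and $\mathcal{U}(PGL_2)$. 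I expect no real obstacle beyond bookkeeping; the only conceptual point is that the inseparable covering $\ad$ forces one to solve a quadratic to recover the nilpotent lift, and that quadratic in characteristic $2$ is precisely $a^2 = xy$.
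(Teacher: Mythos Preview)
Your computation is correct and is exactly the routine verification the paper leaves implicit with ``One easily obtains the following.'' The only slip is cosmetic: in the final sentence you mention $\sqrt{x_1}$, but only $\sqrt{x_2}$ and $\sqrt{x_5}$ actually appear in the formula for $(\overline{\phi})^{-1}$.
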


\end{document}